\title{\vspace{-1cm} \linespread{1.15} \bfseries \large ADELIC DESCENT FOR K-THEORY}
\author{\MakeUppercase Hyungseop Kim}
\date{}
\let\originallhook=\lhook
\let\lhook=\originallhook
\newcommand\sbullet[1][.5]{\mathbin{\vcenter{\hbox{\scalebox{#1}{$\bullet$}}}}} 
\DeclareMathAlphabet{\mathpzc}{OT1}{pzc}{m}{it}
\def\Map{\operatorname{Map}}
\def\map{\operatorname{map}}
\def\GL{\operatorname{GL}}
\def\Sp{\operatorname{Sp}}
\def\Spec{\operatorname{Spec}}
\def\op{\operatorname{op}}
\def\N{\operatorname{N}}
\def\colim{\operatorname{colim}}
\def\Ring1{\mathsf{Ring}1}
\def\CRing1{\mathsf{CRing}1}
\def\Mod{\text{Mod}}
\def\QCoh{\text{QCoh}}
\def\CAlg{\text{CAlg}}
\def\Sch{\text{Sch}}
\def\Frac{\text{Frac}}
\def\bbN{\mathbb{N}}
\def\bbZ{\mathbb{Z}}
\def\Frac{\operatorname{Frac}}
\def\+1{\xrightarrow{+1}}
\def\psh{_{\ast}}
\def\inv{^{-1}}
\def\pb{^{\ast}}
\def\ush{^{!}}
\def\lsh{_{!}}
\def\Supp{\text{Supp}}
\def\Gr{\text{Gr}}
\def\isomto{\underset{\sim}{\to}}
\def\loc{\text{loc}}
\def\fg{\text{fg}}
\def\Cat{\text{Cat}}
\def\h{\text{h}}
\def\perf{\text{perf}}
\def\ex{\text{ex}}
\def\st{\text{st}}
\def\Perf{\text{Perf}}
\def\fgProj{\text{Proj}^{\fg}}
\def\Prl{\text{Pr}^{\text{L}}}
\def\st{\text{st}}
\def\Sp{\text{Sp}}
\def\N{\text{N}}
\def\Fun{\text{Fun}}
\def\fib{\text{fib}}
\def\cof{\text{cof}}
\def\Ind{\text{Ind}}
\def\co{\text{co}}
\def\CFib{\text{CFib}}
\def\loc{\operatorname{loc}}
\def\Cpl{\operatorname{Cpl}}
\def\Shv{\operatorname{Shv}}
\theoremstyle{definition}
\newtheorem{example}{Example}[subsection]  
\newtheorem{definition}[example]{Definition}
\newtheorem{proposition}[example]{Proposition}
\newtheorem{lemma}[example]{Lemma}
\newtheorem{theorem}[example]{Theorem}
\newtheorem{corollary}[example]{Corollary}
\newtheorem{remarkn}[example]{Remark} 
\newtheorem*{ack}{Acknowledgements}
\theoremstyle{remark}
\renewenvironment{abstract}{\noindent\begin{center}\begin{minipage}{0.85\linewidth}\small{\scshape Abstract.}}{\end{minipage}\end{center}}
\begin{document}
\maketitle
\begin{abstract}
We prove an adelic descent result for localizing invariants: for each Noetherian scheme $X$ of finite Krull dimension and any localizing invariant $E$, e.g., algebraic K-theory of Bass-Thomason, there is an equivalence $E(X)\simeq \lim E(A^{\sbullet}_{\text{red}}(X))$, where $A^{\sbullet}_{\text{red}}(X)$ denotes Beilinson's semi-cosimplicial ring of reduced adeles on $X$. We deduce the equivalence from a closely related cubical descent result, which we prove by establishing certain exact sequences of perfect module categories over adele rings. 
\end{abstract}

{
  \let\clearpage\relax
  \small
  \tableofcontents
}

\section{Introduction}
Let $X$ be a Noetherian scheme of finite Krull dimension $n$. Beilinson's construction \cite{beilinson} of higher adeles on Noetherian schemes produces the semi-cosimplicial ring $A^{\sbullet}_{\text{red}}(X)$ of reduced adeles on $X$, whose associated complex of abelian groups computes the cohomology of $\mathscr{O}_{X}$. Each ring $A^{r}_{\text{red}}(X)$ of reduced adeles decomposes into a product of adele rings $A(i_{0},...,i_{r})$ indexed by $0\leq i_{0}<\cdots<i_{r}\leq n$, i.e., subsets of $[n]$ of cardinality $r+1$. Moreover, the association $\{i_{0},...,i_{r}\}\mapsto A(i_{0},...,i_{r})$ defines a functor $A$ on $\mathcal{P}([n])\backslash\emptyset$, i.e, an $n$-cubical diagram $A$ of rings without the initial vertex (see Remark \ref{rmk:cubicaladeles} (1)). Our goal in this paper is to prove the following adelic descent result for nonconnective algebraic K-theory spectra (or more generally for any localizing invariant of stable $\infty$-categories):

\begin{theorem}[Theorem \ref{thm:adelicdescent}] \label{thm:main}
Let $X$ be a Noetherian scheme of finite Krull dimension $n$. Also, let $A^{\sbullet}_{\text{red}}(X)$ and $A(-)$ be the semi-cosimplicial and cubical (without the initial vertext) diagram of adele rings on $X$ respectively (see Remarks \ref{rmk:functoriality} and \ref{rmk:cubicaladeles}). Then, we have equivalences of (nonconnective) algebraic K-theory spectra 
\begin{align}
K(X)\simeq\lim_{[r]\in(\Delta_{s})_{\leq n}}K(A^{r}_{\text{red}}(X))~~\text{and}   \label{formula:descent1} \\
K(X)\simeq\lim_{0\leq i_{0}<\cdots<i_{r}\leq n}K(A(i_{0},...,i_{r})).   \label{formula:descent2}
\end{align}
\end{theorem}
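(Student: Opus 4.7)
The strategy is to first establish the cubical statement (\ref{formula:descent2}) at the level of small stable $\infty$-categories of perfect modules, and then deduce the semi-cosimplicial formula (\ref{formula:descent1}) from it formally. Since any localizing invariant $E$ preserves finite products of stable $\infty$-categories and sends Verdier-exact sequences to fibre sequences of spectra, the plan is to prove an equivalence
\begin{equation*}
\Perf(X)\simeq\lim_{0\leq i_{0}<\cdots<i_{r}\leq n}\Perf(A(i_{0},...,i_{r}))
\end{equation*}
in $\Cat_{\infty}^{\perf}$, after which applying $E$ (and specializing to $K$) yields (\ref{formula:descent2}).

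I would proceed by induction on the Krull dimension $n$. For $n=0$, the scheme $X$ is a finite disjoint union of spectra of local Artinian rings, the punctured cube has a single vertex $A(0)=\prod_{x\in X}\scO_{X,x}$, and the claim reduces to the obvious equivalence $X\simeq\Spec A(0)$. For the inductive step, I would partition the punctured cube $\mathcal{P}([n])\setminus\emptyset$ according to whether a subset $S$ contains the index $0$ or not. The sub-cube with $0\notin S$ encodes adeles supported on chains lying in codimension $\geq 1$, and the induction hypothesis applied to this stratum should identify the associated limit with $\Perf$ of the codimension-$\geq 1$ data; the sub-cube with $0\in S$ captures what happens after inverting functions that are generic at codimension $0$. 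A Mayer-Vietoris assembly of these two halves then reconstructs $\Perf(X)$.

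The principal technical obstacle is establishing the necessary exact sequences of perfect module categories over the adele rings. Concretely, for each face inclusion $S\subsetneq S\cup\{j\}$ of the cube one needs a localization sequence of the shape
\begin{equation*}
\Perf_{Z}(A(S))\to\Perf(A(S))\to\Perf(A(S\cup\{j\}))
\end{equation*}
for an appropriate closed support $Z$, with the fibre $\Perf_{Z}(A(S))$ identifiable with perfect modules over a neighbouring adele ring (morally, one level deeper in completion or localization). The one-dimensional prototype is the Parshin-Beilinson Mayer-Vietoris square for a Dedekind domain $R$, namely $\Perf(R)\simeq\Perf(K)\times_{\Perf(\mathbb{A}_{f})}\Perf(\prod_{p}\widehat{R}_{p})$, which follows from Thomason-Trobaugh localization combined with the fact that adic completion is an equivalence on perfect complexes with torsion support. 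Iterating and gluing these one-dimensional squares along the codimension strata of $X$ is the delicate step, because the higher adeles are genuinely ind-pro in nature: one must verify at each stage that the relevant Koszul-style exact sequences of stable $\infty$-categories remain exact after passing through completions and ind-localizations, and that these sequences assemble coherently across the cube.

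Finally, once the $\Perf$-level cubical descent is in hand, (\ref{formula:descent1}) follows formally: the product decomposition $A^{r}_{\text{red}}(X)=\prod_{\{i_{0}<\cdots<i_{r}\}}A(i_{0},...,i_{r})$ together with the additivity of $E$ on finite products gives
\begin{equation*}
E(A^{r}_{\text{red}}(X))\simeq\prod_{0\leq i_{0}<\cdots<i_{r}\leq n}E(A(i_{0},...,i_{r})),
\end{equation*}
and the standard cofinality between the cardinality-graded punctured cube $\mathcal{P}([n])\setminus\emptyset$ and the truncated semi-cosimplicial indexing category $(\Delta_{s})_{\leq n}$ converts the cubical limit into the semi-cosimplicial totalization, yielding (\ref{formula:descent1}).
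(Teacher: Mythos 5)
The central step of your plan does not go through as stated: you propose to prove $\Perf(X)\simeq\lim_{T}\Perf(A(T))$ in $\Cat^{\perf}$ and then ``apply $E$''. But a localizing invariant only sends exact sequences to fibre sequences and finite products to products; it does not commute with limits of $\infty$-categories, not even with pullbacks (this is exactly the failure of excision, and it is why the known adelic descent theorem for $\Perf$ does not formally imply the K-theoretic statement --- the paper's introduction makes precisely this point). Already for a Dedekind domain, the equivalence $\Perf(R)\simeq\Perf(F)\times_{\Perf(A)}\Perf(O)$ does not by itself yield $K(R)\simeq K(F)\times_{K(A)}K(O)$; one must know the square is assembled from localization sequences with matching fibres and then argue directly with the resulting fibre sequences of spectra. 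So the argument has to be run entirely on the spectrum side: one shows that the punctured cube $T\mapsto E(A(T))$ is a limit diagram by writing it as a map of $(n-1)$-cubes (splitting the cube according to whether a vertex contains the top index), identifying the $(n-1)$-cube of fibres using the localization sequences, and invoking the fact that an $n$-cube of spectra is a limit diagram iff its $(n-1)$-cube of fibres is; then one iterates.

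For that iteration to close up, the fibres must themselves form a cube of the same kind one level down, and this is where your description --- ``the fibre is identifiable with perfect modules over a neighbouring adele ring'' --- would break. The fibre of $\Perf(A(T))\to\Perf(A(T\sqcup\{j\}))$ is not $\Perf$ of another adele ring: it is the subcategory of $\Perf(A(T))$ generated by base changes of perfect complexes on $X$ whose support has dimension $\le j-1$. Such supports do not come from closed subschemes (a perfect complex on a closed subscheme is essentially never perfect on $X$), so your induction on $\dim X$ via ``the codimension $\ge 1$ data'' has no smaller scheme to which the inductive hypothesis applies. The repair is to keep everything on $X$ and filter by dimension of support: introduce the subcategories $\Perf_{\le i}(A(T))$ generated by the images of perfect complexes on $X$ supported in dimension $\le i$, establish exact sequences $\Perf_{\le i-1}(A(T))\to\Perf_{\le i}(A(T))\to\Perf_{\le i}(A(T\sqcup\{i\}))$ for $T\subseteq[i-1]$, and induct on $i$ down to $i=0$, where $\Perf_{\le 0}(X)\to\Perf_{\le 0}(A(0))$ is an equivalence. (Proving these exact sequences --- fully faithfulness of the right adjoint and the identification of the kernel --- is the real technical core, and requires the completion/localization compatibilities you allude to.) Your final paragraph is essentially correct in content, but the passage from the cubical to the semi-cosimplicial limit is a right Kan extension along the Cartesian fibration $\mathcal{P}([n])\setminus\emptyset\to(\Delta_{s})_{\le n}$, taking products over the discrete fibres; it is not a cofinality statement, since that functor is not final.
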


\begin{remarkn}
Theorem \ref{thm:main} remains valid if we replace the algebraic K-theory functor $K$ by any localizing invariant $E:\Cat^{\ex}\to\mathcal{T}$ valued in a stable $\infty$-category $\mathcal{T}$. In fact, the proof of Theorem \ref{thm:adelicdescent} does not use any properties specific to $K$ except that $K$ is a localizing invariant of small stable $\infty$-categories. 
\end{remarkn}

For the case of curves, Theorem \ref{thm:main} is closely related to Weil's description of vector bundles on $X$. Suppose $X = \Spec R$ for a Dedekind ring $R$ which is not a field. Then, $A(0) = F$ is the field of fractions of $R$, $A(1) = O$ is the ring of integral adeles $\prod_{\mathfrak{p}\in (\Spec R)_{0}}R_{\mathfrak{p}}^{\wedge}$ (where the product is taken over maximal ideals of $R$), and $A(01) = A = F\otimes_{R}O$ is the ring of finite adeles. Moreover, $A^{\sbullet}_{\text{red}}(X)$ takes the form of $F\times O\rightrightarrows A$. Weil's adelic uniformization theorem implies we have an equivalence between the (1-)groupoid $\text{BGL}_{r}(R)$ of rank $r$ vector bundles on $\Spec R$ and the double quotient groupoid $[\GL_{r}(F)\backslash \GL_{r}(A)/\GL_{r}(O)]$ (see \cite[Corollary 3.38 and 3.39]{adelic} for details and generalizations to Noetherian schemes). On objects, the equivalence sends each isomorphism class of finite projective $R$-module $M$ of rank $r$ to the double coset represented by $(\phi_{\eta}|_{F_{p}^{\wedge}}\circ\phi_{p}|_{F^{\wedge}_{p}}^{-1})_{p\in X_{0}}\in\GL_{r}(A)$, where $\phi_{\eta}$ is a trivialization of $M$ on a nonempty open subset of $X$ (hence gives a trivialization of $F\otimes_{R}M$) and each $\phi_{p}$ is a trivialization of $M^{\wedge}_{p}$ at the closed point $p$ of $X$. In particular, each finite projective $R$-module is obtained by gluing finite projective (in fact, finite free) modules over $F$ and $O$ which are isomorphic to each other over $A$ after base change. From this, we know there is an equalizer diagram $\pi_{0}\fgProj(R)\to\pi_{0}\fgProj(F)\times\pi_{0}\fgProj(O)\rightrightarrows\pi_{0}\fgProj(A)$ of commutative monoids\footnote{Here, for each ring $R$, $\fgProj(R)$ stands for the (nerve of the) category of finite projective $R$-modules equipped with the monoidal structure given by direct sums. Thus, the set $\pi_{0}\fgProj(R)$ of isomorphism classes of finite projective $R$-modules admits a commutative monoid structure.}. After group-completion, we obtain a sequence $K_{0}(R)\to K_{0}(F)\oplus K_{0}(O)\to K_{0}(A)$ of abelian groups. Note that in general, this sequence does not realize $K_{0}(R)$ as a kernel of the second map, as the group-completion functor (which is a left adjoint functor) does not preserve limits in general. Nonetheless, we can realize this sequence as a part of a long exact sequence through our descent result. By Theorem \ref{thm:main}, we have a pullback square
\begin{equation}\label{diag:curve}
\begin{tikzcd}
K(R) \ar[r] \ar[d] & K(F) \ar[d] \\
K(O) \ar[r] & K(A)
\end{tikzcd}
\end{equation} 
of spectra, and hence we have a long exact Mayer-Vietoris sequence 
\begin{align*}
\cdots\to K_{i}(R)\to K_{i}(F)\oplus K_{i}(O)\to K_{i}(A)\to K_{i-1}(R)\to\cdots~~~(i\in\bbZ)
\end{align*}
of abelian groups. Around degree $i=0$, we have an exact sequence $\cdots \to K_{1}(A)\to K_{0}(R)\to K_{0}(F)\oplus K_{0}(O)\to K_{0}(A)\to 0$ which extends the previous sequence of abelian groups obtained from the Weil uniformization theorem. \\
\indent Another motivation for our result is the following adelic descent theorem of \cite{adelic} for perfect modules. Recall that for a Noetherian scheme $X$, Beilinson's construction indeed provides us the cosimplicial ring $A^{\sbullet}(X)$ of adeles on $X$ whose dual normalization is $A^{\sbullet}_{\text{red}}(X)$ \cite[Proposition 5.1.3]{huber}. 
\begin{theorem}[\cite{adelic}, Theorem 3.1] \label{thm:perfectdescent}
Let $X$ be a Noetherian scheme. Then, there is an equivalence of symmetric monoidal stable $\infty$-categories $\Perf(X)\simeq \lim_{[r]\in\Delta}\Perf(A^{r}(X))$ in $\CAlg(\Cat^{\perf})$. 
\end{theorem}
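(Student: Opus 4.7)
The plan is to prove the descent equivalence by Noetherian induction on the Krull dimension of $X$, reducing each inductive step to an open-closed recollement calculation with lower-dimensional input. First I would reduce to the affine case: both $X\mapsto\Perf(X)$ and $X\mapsto\lim_{[r]\in\Delta}\Perf(A^{r}(X))$ are Zariski sheaves of symmetric monoidal stable $\infty$-categories. The former is classical; for the latter, Beilinson's construction is functorial in open immersions $j\colon U\hookrightarrow X$, realising $A^{r}(U)$ as the direct factor of $A^{r}(X)$ picked out by flags contained in $U$, compatibly with all face and degeneracy maps. Passing to an affine cover reduces to $X=\Spec R$ with $R$ Noetherian of Krull dimension $n$.

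Next I induct on $n$. The base case $n=0$ is immediate: $R$ is a finite product of Artinian local rings, every flag has length one, $A^{\sbullet}(X)$ degenerates to the constant cosimplicial ring $R$, and the limit is $\Perf(R)=\Perf(X)$. For the inductive step, fix $n\geq 1$, assume the theorem for Noetherian schemes of dimension $<n$, and pick a proper closed subscheme $i\colon Z\hookrightarrow X$ of dimension $<n$ with open complement $j\colon U=X\setminus Z\hookrightarrow X$. The classical open-closed recollement gives a fibre sequence $\Perf_{Z}(X)\to\Perf(X)\to\Perf(U)$, and the strategy is to match this termwise with an analogous fibre sequence on the right-hand side of the asserted equivalence.

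The key geometric input is a Mayer--Vietoris pullback square of cosimplicial adele rings relating $A^{\sbullet}(X)$, $A^{\sbullet}(U)$, and the adeles $A^{\sbullet}(\widehat{X}_{Z})$ of the formal completion $\widehat{X}_{Z}$ of $X$ along $Z$, glued along the adeles of the common punctured formal neighbourhood. Concretely, this square is obtained by splitting each factor of $A^{r}(X)$ according to whether its deepest vertex lies in $Z$ or in $U$, and reassembling the two pieces by localisation on the $Z$-side. Applying $\Perf$ termwise and taking $\lim_{\Delta}$ yields a pullback square of symmetric monoidal stable $\infty$-categories; the inductive hypothesis applied to $Z$ (of dimension $<n$), together with a devissage along infinitesimal thickenings, identifies the $Z$-side with $\Perf_{Z}(X)$, and a secondary Noetherian induction on proper open subschemes handles the $U$-side. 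This forces $\Perf(X)\to\lim_{\Delta}\Perf(A^{\sbullet}(X))$ to be an equivalence, and the symmetric monoidal enhancement in $\CAlg(\Cat^{\perf})$ is automatic since every comparison functor is symmetric monoidal and limits in $\CAlg$ are computed underlying.

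The main obstacle is to justify that the Mayer--Vietoris square of cosimplicial rings above really induces a pullback after applying $\Perf(-)$ termwise and then $\lim_{[r]\in\Delta}$. Adele rings are iterated $\Ind$-$\Pro$ (Tate) objects, so compatibility of $\Perf$ with Tate-style completions along open-closed decompositions is subtle: one needs short exact sequences of perfect module categories at every cosimplicial degree, compatibly with all face and degeneracy maps, and then must ensure these assemble correctly under the $\lim_{\Delta}$ totalisation. Establishing these Tate-theoretic exact sequences of perfect module categories over adele rings, and controlling their cosimplicial totalisation, is the crux of the argument.
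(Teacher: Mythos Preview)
This theorem is not proved in the present paper at all: it is quoted as Theorem~3.1 of \cite{adelic} and used only as motivation. The paper explicitly says its own strategy is ``independent of the proof of Theorem \ref{thm:perfectdescent} given in \cite{adelic}'', and then proves a different statement (adelic descent for localizing invariants, Theorem~\ref{thm:adelicdescent}) by a different method. So there is no ``paper's own proof'' of this statement to compare your proposal against.

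That said, it is worth comparing your outline to what the paper does prove. The paper's argument for $K$-theory (and general localizing invariants) does not use Noetherian induction on $\dim X$ with an arbitrary closed $Z\subsetneq X$ and open complement $U$. Instead it works with the \emph{dimension-of-support filtration} $\Perf_{\leq i}(X)$ and its analogues $\Perf_{\leq i}(A(T))$, and establishes for every $T\subseteq[i-1]$ an exact sequence
\[
\Perf_{\leq i-1}(A(T))\to\Perf_{\leq i}(A(T))\to\Perf_{\leq i}(A(T\sqcup\{i\}))
\]
in $\Cat^{\ex}$ (Proposition~\ref{prop:exactseqadele}). These feed an induction on $i$ that peels off one dimension stratum at a time from the \emph{cubical} diagram $T\mapsto A(T)$, and the semi-cosimplicial statement is recovered from the cubical one via the Cartesian fibration $\mathcal{P}([n])\setminus\emptyset\to(\Delta_{s})_{\leq n}$. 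The hard content is the decomposition Lemma~\ref{lem:keydecomposition}, which is exactly the ``Tate-theoretic exact sequences of perfect module categories over adele rings'' you identify as the crux, but organised by codimension strata rather than by a single open-closed pair.

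Your proposal, by contrast, has two genuine gaps beyond the one you flag. First, the ``secondary Noetherian induction on proper open subschemes'' to handle the $U$-side does not reduce dimension: $U=X\setminus Z$ typically has $\dim U=\dim X$, and for non-affine $U$ you have left the affine setting you reduced to, so neither the primary nor the secondary induction closes. Second, the asserted Mayer--Vietoris square involving ``adeles of the formal completion $\widehat{X}_{Z}$'' and of a ``punctured formal neighbourhood'' is not something Beilinson's construction hands you: the adele functor is defined for Noetherian schemes, not formal schemes, and the splitting of $A^{r}(X)$ by whether the deepest vertex lies in $Z$ or $U$ does not produce $A^{r}(\widehat{X}_{Z})$ on the nose. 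Making this precise is essentially the whole difficulty, and the paper's stratum-by-stratum exact sequences are one way to do it cleanly; note also that the theorem as stated is for arbitrary Noetherian $X$, not necessarily of finite Krull dimension, so an induction on $\dim X$ would at best recover a weaker statement.
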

As localizing invariants (and in particular the algebraic K-theory functor $K$) do not preserve limits, in fact even pullbacks in general, we cannot deduce our descent result for K-theory spectra directly from Theorem \ref{thm:perfectdescent}. Instead, we follow a strategy which is more suited to investigate descent results for localizing invariants, and independent of the proof of Theorem \ref{thm:perfectdescent} given in \cite{adelic}.  \\
\indent We approach Theorem \ref{thm:main} as follows. Through a comparison between cubical and semi-cosimplicial limits (Corollary \ref{cor:fiberlim}), we deduce the semi-cosimplicial descent (\ref{formula:descent1}) from the cubical descent (\ref{formula:descent2}). In order to prove (\ref{formula:descent2}), we introduce auxiliary stable subcategories $\Perf_{\leq i}(A(T))$ of the $\infty$-category $\Perf(A(T))$ of perfect modules\footnote{Here, we set $\Perf(A(\emptyset)) = \Perf(X)$.} over the adele ring $A(T)$ for each $0\leq i\leq n$ and $T\subseteq[n]$ (Definition \ref{def:perf}), and prove that we have exact sequences $\Perf_{\leq i-1}(A(T))\to\Perf_{\leq i}(A(T))\to\Perf_{\leq i}(A(T\sqcup\{i\}))$ of small stable $\infty$-categories for each $T\subseteq[i-1]$ (Proposition \ref{prop:exactseqadele}). When $i=n$, the image of the second map by $K$ in this exact sequence recovers the $n$-cubical diagram $T\mapsto K(A(T))$ of (\ref{formula:descent2}). For $n=2$ (i.e., the case of surfaces), this $2$-cube is obtained as an image of the right side $2$-cube by $K$ in the following diagram of small stable $\infty$-categories:
\begin{equation}\label{diag:2cubes}
\begin{tikzcd}[row sep={35,between origins}, column sep={60,between origins}]
\Perf_{\leq 1}(X)\ar[rr] \ar[dr,swap] \ar[dd,swap] &&
\Perf(X) = \Perf_{\leq 2}(X)\arrow[rr] \arrow[dr,swap] \arrow[dd,swap] &&
  \Perf_{\leq 2}(A(2)) \arrow[dd,swap] \arrow[dr] \\
& \Perf_{\leq 1}(A(1)) \ar[rr,crossing over] &&
\Perf_{\leq 2}(A(1)) \arrow[rr,crossing over] &&
  \Perf_{\leq 2}(A(12)) \arrow[dd] \\
\Perf_{\leq 1}(A(0)) \ar[rr] \ar[dr,swap] &&
\Perf_{\leq 2}(A(0)) \arrow[rr] \arrow[dr,swap] && \Perf_{\leq 2}(A(02)) \arrow[dr,swap] \\
& \Perf_{\leq 1}(A(01)) \ar[rr] \ar[uu,<-,crossing over] &&
\Perf_{\leq 2}(A(01)) \arrow[rr] \arrow[uu,<-,crossing over]&& \Perf_{\leq 2}(A(012)).
\end{tikzcd}
\end{equation}
After applying $K$, the four horizontal sequences in the diagram (\ref{diag:2cubes}) become fiber sequences of spectra. Thus, by \cite[1.2.4.15]{ha} (see Proposition \ref{prop:fiberlimcrit}) the $n$-cube $T\mapsto K(A(T))$ is a limit diagram precisely when the $(n-1)$-cube $T\mapsto K(\Perf_{\leq n-1}(A(T)))$ (where $T\subseteq[n-1]$) is a limit diagram. For $n=2$, this $1$-cube is an image of the leftmost square of the diagram (\ref{diag:2cubes}) by $K$. Using the exact sequences of Proposition \ref{prop:exactseqadele}, we can repeat this procedure on each $i$-cube $T\mapsto K(\Perf_{\leq i}(A(T)))$ for all $0\leq i\leq n$ until we reach $i=0$. For our $n=2$ case, the leftmost square of the diagram (\ref{diag:2cubes}) fits into the following new diagram (as the right side $1$-cube) whose rows are exact sequences of small stable $\infty$-categories:
\begin{equation}\label{diag:1cubes}
\begin{tikzcd}
\Perf_{\leq0}(X) \ar[r] \ar[d]_{\sim}^{i=0} & \Perf_{\leq 1}(X) \ar[r] \ar[d] & \Perf_{\leq1}(A(1)) \ar[d]\\
\Perf_{\leq 0}(A(0)) \ar[r] & \Perf_{\leq 1}(A(0)) \ar[r] & \Perf_{\leq 1}(A(01)).
\end{tikzcd}
\end{equation}  
Note that after applying the functor $K$, the right side $1$-cube of the diagram (\ref{diag:1cubes}) takes the form of the square (\ref{diag:curve}) for curves. By Proposition \ref{prop:exactseqadele} and $\Perf_{\leq -1}(X)=0$ (or by \cite[Theorem 2.6.3]{tt}), the left vertical arrow $\Perf_{\leq 0}(X)\to\Perf_{\leq 0}(A(0))$ of the diagram (\ref{diag:1cubes}) is an equivalence. Thus, the $0$-cube $K(\Perf_{\leq 0}(X))\to K(\Perf_{\leq 0}(A(0)))$ is a limit diagram, and we know the $1$-cube $T\mapsto K(\Perf_{\leq 1}(A(T)))$ and the original $2$-cube $T\mapsto K(A(T))$ are limit diagrams. \\
\indent In \cite{tt}, Thomason showed that algebraic K-theory satisfies Zariski descent for qcqs (i.e., quasicompact quasiseparated) schemes through Zariski excision \cite[Theorem 8.1]{tt}. There, he first established exact sequences $\Perf_{Z}(X)\to\Perf(X)\to\Perf(U)$ of perfect modules for each quasicompact open embedding $U\hookrightarrow X$ with complement $Z = X\backslash U$. Then, he used the equivalence $\Perf_{Z}(X)\simeq \Perf_{\pi\inv Z}(Y)$ for each flat morphism $Y\xrightarrow{\pi}X$ of qcqs schemes which is an isomorphism over $Z$ in order to prove the equivalence $K(X)\simeq K(Y)\times_{K(\pi\inv U)}K(U)$ as in the situation of diagram (\ref{diag:1cubes}). Our proof of Theorem \ref{thm:main} applies this Thomason-Trobaugh argument to cubical diagrams of perfect modules at each induction step to deduce the descent result. Note that Thomason's approach more generally proves that localizing invariants satisfy Nisnevich excision, and hence satisfy Nisnevich descent for qcqs schemes (\cite[3.7.5.1]{sag}, see also \cite[Proposition 5.15]{etalek} for qcqs spectral algebraic spaces). Although localizing invariants do not satisfy the more useful \'etale descent property in general, Clausen and Mathew proved that localizing invariants valued in $L_{n}^{f}$-local spectra satisfy \'etale descent on ($\mathbb{E}_{2}$-)spectral algebraic spaces \cite[Theorem 5.39]{etalek} (see \cite[Theorem 7.14]{etalek} for the \'etale hyperdescent result under finiteness conditions), e.g., $T(n)$-localized algebraic $K$-theory $L_{T(n)}K$ satisfies \'etale descent for all $n$, which generalizes Thomason's result for $L_{T(1)}K$. Theorem \ref{thm:main}, although not a descent result for a particular Grothendieck topology, provides a descent result for localizing invariants for an adelic resolution $A^{\sbullet}_{\text{red}}(X)$ of each Noetherian scheme $X$ of finite Krull dimension, allowing one to understand $K(X)$ via K-theory of adele rings and maps between them. 

\begin{ack}
The author would like to thank his advisor Michael Groechenig for his suggestion of the problem and support through numerous discussions, without which this project would not have led anywhere. I also would like to thank Benjamin Antieau for helpful discussions through email, and Oliver Braunling for pointing out Balmer's paper \cite{balmer2}. The author was supported by the CMK Foundation. 
\end{ack}

\section{Categorical and algebraic backgrounds}
In this section we review and explain necessary backgrounds on $\infty$-categories and higher adeles. In \ref{subsec:stable} we review the notion of exact sequences of stable $\infty$-categories and localizing invariants following \cite{bgt}. In \ref{subsec:limit} we study cubical and semi-cosimplicial diagrams and their limits through Cartesian fibrations. As we use the language of sheaves of module spectra following \cite{sag}, we briefly recall some of their theory in \ref{subsec:modules}. Finally, we review semi-cosimplicial and cubical sheaves of adele rings, as well as modules over adele rings on Noetherian schemes in \ref{subsec:adeles}. 

\subsection{Stable $\infty$-categories and localizing invariants}
\label{subsec:stable}

Let $\Cat^{\ex}$ be the $\infty$-category of small stable $\infty$-categories and exact functors, and let $\Prl_{\st}$ be the $\infty$-category of presentable stable $\infty$-categories and left adjoint (i.e., colimit preserving) functors. The ind-completion construction $\Ind:\Cat^{\ex}\to\Prl_{\st}$ (as in \cite[5.3.5]{htt}) relates these two categories, and factors through the full subcategory of $\Prl_{\st}$ spanned by compactly generated stable $\infty$-categories. Let $\Cat^{\perf}$ be the full subcategory of $\Cat^{\ex}$ consisting of idempotent complete small stable $\infty$-categories. Then, the construction $(\Ind(-))^{\omega}:\Cat^{\ex}\to\Cat^{\perf}$ is well-defined and behaves as indempotent-completion (i.e., provides a left adjoint to the inclusion functor $\Cat^{\perf}\subseteq\Cat^{\ex}$). In fact, $\Ind$ induces an equivalence $\Ind:\Cat^{\perf}\to\Prl_{\st,\omega}$ from $\Cat^{\perf}$ onto the $\infty$-category $\Prl_{\st,\omega}$ of compactly generated (presentable) stable $\infty$-categories and compact left adjoint functors (i.e., those preserving compact objects, or equivalently those with filtered-colimit preserving right adjoints \cite[5.5.7.2]{htt}), whose inverse is given by the functor $(-)^{\omega}$ taking ($\omega$-)compact objects of each $\infty$-categories. Recall that both $\Cat^{\ex}$ and $\Cat^{\perf}$ admit all (small) limits and colimits, and the inclusion $\Cat^{\ex}\to\Cat_{\infty}$ preserves limits and filtered colimits (\cite[1.1.4]{ha} and \cite[4.25]{bgt}). Likewise, recall that $\Prl_{\st}$ has all (small) limits and colimits, and the inclusion $\Prl_{\st}\to\widehat{\Cat}_{\infty}$ preserves limits and (all) colimits (\cite[Proposition 2.4 and its proof]{galois}, see also \cite[5.5.3.13, 5.5.3.18]{htt}). Note that the inclusion $\Prl_{\st,\omega}\to\widehat{\Cat}_{\infty}$, although preserving colimits \cite[5.5.7.6, 5.5.7.7]{htt}, does not preserve limits in general; as noted in \cite[Section 2]{kel}, fiber products of compactly generated presentable stable $\infty$-categories may not be compactly generated.  \\
\indent Let $\mathcal{A}\to\mathcal{B}\to\mathcal{C}$ be a sequence in $\Prl_{\st}$. Recall that the sequence is called \emph{exact} if the composite functor is zero, $\mathcal{A}\to\mathcal{B}$ is fully faithful, and the induced functor $\mathcal{B}/\mathcal{A}\to\mathcal{C}$ is an equivalence. Here, $\mathcal{B}/\mathcal{A}$ is a cofiber of the functor $\mathcal{A}\to\mathcal{B}$ (i.e., a pushout of functors $\mathcal{A}\to\mathcal{B}$ and $\mathcal{A}\to 0$ in $\Prl_{\st}$), which can be described via Bousfield localization in $\Prl_{\st}$. In fact, the homotopy category of $\mathcal{B}/\mathcal{A}$ is equivalent to the Verdier quotient of the inclusion $\text{h}\mathcal{A}\to \text{h}\mathcal{B}$, and the sequence is exact precisely if the corresponding sequence of homotopy categories is an exact sequence of triangulated categories \cite[5.9-5.11]{bgt}. Now, we call a sequence $\mathcal{A}\to\mathcal{B}\to\mathcal{C}$ in $\Cat^{\ex}$ \emph{exact} if the resulting sequence $\Ind(\mathcal{A})\to\Ind(\mathcal{B})\to\Ind(\mathcal{C})$ in $\Prl_{\st}$ is exact in the previous sense. This is equivalent to the condition that the composite functor is zero, $\mathcal{A}\to\mathcal{B}$ is fully faithful, and the induced functor $\mathcal{B}/\mathcal{A}\to\mathcal{C}$ is an equivalence \emph{after} idempotent completion \cite[5.13]{bgt}. One can describe the cofiber $\mathcal{B}/\mathcal{A}$ in $\Cat^{\ex}$ intrinsically (i.e., without embedding into $\Ind(\mathcal{B})$) through Dwyer-Kan localization in a way compatible with Bousfield localization in $\Prl_{\st}$, and still $\h(\mathcal{B}/\mathcal{A})\simeq \h\mathcal{B}/\h\mathcal{A}$ holds. In fact, one has a description of the mapping space as a filtered colimit $\Map_{\mathcal{B}/\mathcal{A}}(\overline{b},\overline{c})\simeq\colim_{a\in\mathcal{A}_{/c}}\Map_{\mathcal{B}}(b,\cof(a\to c))$, where $\overline{b}$ and $\overline{c}$ denote images of $b,c\in\mathcal{B}$ respectively \cite[I.3.3]{ns}.   \\
\indent Let us briefly explain the notion of fiber (or kernel) categories of exact functors between stable $\infty$-categories, which will be useful in the description of split-exact sequences. 

\begin{proposition}\label{prop:infinityfiber}
Let $\mathcal{C}$ be a pointed $\infty$-category, and let $\mathcal{B}\xrightarrow{q}\mathcal{C}$ be a functor from an $\infty$-category $\mathcal{B}$. Fix any zero object $0:\Delta^{0}\to\mathcal{C}$ of $\mathcal{C}$. Then, the $\infty$-category $\fib(q) = \Delta^{0}\times_{\mathcal{C}}\mathcal{B}$ is equivalent to the full subcategory of $\mathcal{B}$ generated by the objects $\{b\in\mathcal{B}~|~q(b)\simeq 0\}$.
\begin{proof}
First, let us describe $\fib(q)$ explicitly in terms of quasicategories. Let $\mathcal{C}'$ be the full subcategory of $\mathcal{C}$ generated by zero objects. 
\begin{lemma}\label{lem:infinityfiber}
The inverse image simplicial set $q^{-1}(\mathcal{C}')\hookrightarrow \mathcal{B}$ is a quasicategory equivalent to a pullback $\fib(q) = \Delta^{0}\times_{\mathcal{C}}\mathcal{B}$ of $\infty$-categories. 
\begin{proof} 
As $\mathcal{C}'$ is a contractible Kan complex \cite[1.2.12.9]{htt}, the canonical map $\mathcal{C}'\to\Delta^{0}$ is a categorical equivalence, and hence its section $\Delta^{0}\to\mathcal{C}'$ given by $0$ is an equivalence. This induces an equivalence $\fib(q)\simeq \mathcal{C}'\times_{\mathcal{C}}\mathcal{B}$, and hence to prove the claim, it suffices to check the latter can be computed by a simplicial set $q^{-1}(\mathcal{C}')$. The inclusion $\mathcal{C}'\xhookrightarrow{\imath}\mathcal{C}$ of a subcategory is by definition an inner fibration, and it is also an isofibration, since any isomorphism $\imath(0')\to 0''$ in $\mathcal{C}$ from a zero object comes from an isomorphism $0'\to0''$ in $\mathcal{C}'$. Hence, $\mathcal{C}'\xhookrightarrow{\imath}\mathcal{C}$ is a categorical fibration, and the pullback $\infty$-category $\mathcal{C}'\times_{\mathcal{C}}\mathcal{B}$ is computed by the inverse image quasicategory $q^{-1}(\mathcal{C}')\hookrightarrow \mathcal{B}$.  
\end{proof}
\end{lemma}

\noindent By Lemma \ref{lem:infinityfiber}, it suffices to check that the quasicategory $q^{-1}(\mathcal{C}')$ is the full subcategory of $\mathcal{B}$ determined by the set of objects $\{b\in\mathcal{B}~|~q(b)\simeq 0\}$. The inverse image quasicategory $q^{-1}(\mathcal{C}')$ is the pullback of simplicial sets viewed as set-valued presheaves on $\Delta$. As $\mathcal{C}'\hookrightarrow\mathcal{C}$ is a full subcategory, it is a full simplicial subset \cite[tag 01CU]{kerodon}, and hence its inverse image $q^{-1}(\mathcal{C}')\hookrightarrow\mathcal{B}$ is a full simplicial subset. As the inclusion is an inner fibration, it is an embedding of a full subcategory. The vertex of $q^{-1}(\mathcal{C}')$ is precisely $\{b\in\mathcal{B}~|~q(b)\simeq 0\}$, and since there should be a unique full simplicial subset (in this case, automatically a full subcategory) of $\mathcal{B}$ with the given vertex set \cite[tag 01CV]{kerodon}, this concludes the proof. 
\end{proof}
\end{proposition}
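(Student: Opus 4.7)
The plan is to compute the $\infty$-categorical pullback $\Delta^{0}\times_{\mathcal{C}}\mathcal{B}$ as a strict pullback of simplicial sets, by replacing the map $\Delta^{0}\to\mathcal{C}$ with a more convenient categorical fibration. First, I would pass from the point $\Delta^{0}$ to the full simplicial subset $\mathcal{C}'\subseteq\mathcal{C}$ spanned by all zero objects. Since $\mathcal{C}$ is pointed, $\mathcal{C}'$ should be a contractible Kan complex, so the section $\Delta^{0}\to\mathcal{C}'$ determined by the chosen zero object is a categorical equivalence. Pulling back along $q$ then yields an $\infty$-categorical equivalence $\fib(q)\simeq\mathcal{C}'\times_{\mathcal{C}}\mathcal{B}$, reducing the problem to identifying the latter pullback.

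Next, I would verify that $\mathcal{C}'\hookrightarrow\mathcal{C}$ is a categorical fibration, which would allow me to interpret $\mathcal{C}'\times_{\mathcal{C}}\mathcal{B}$ as the honest simplicial-set pullback $q^{-1}(\mathcal{C}')$. Since $\mathcal{C}'$ is a subcategory the inclusion is automatically an inner fibration, so it remains to check the isofibration condition: any equivalence in $\mathcal{C}$ out of (or into) a zero object has its other endpoint also a zero object, so the needed lifts land in $\mathcal{C}'$. Once this is in place, strict and $\infty$-categorical pullbacks along $\mathcal{C}'\hookrightarrow\mathcal{C}$ agree, and the fiber is presented by the preimage quasicategory $q^{-1}(\mathcal{C}')\subseteq\mathcal{B}$.

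Finally, I would identify $q^{-1}(\mathcal{C}')$ with the full subcategory of $\mathcal{B}$ on the objects $\{b\in\mathcal{B}\mid q(b)\simeq 0\}$. Because $\mathcal{C}'\subseteq\mathcal{C}$ is a full simplicial subset, its preimage $q^{-1}(\mathcal{C}')\subseteq\mathcal{B}$ is again a full simplicial subset; combined with the inner fibration property, this forces it to be a full $\infty$-subcategory, and its vertex set is by construction precisely the objects $b$ with $q(b)$ a zero object of $\mathcal{C}$. I expect the main technical obstacle to be confirming the isofibration property of $\mathcal{C}'\hookrightarrow\mathcal{C}$, since this is the step that licenses replacing a homotopy pullback by a strict one; the remaining manipulations are largely formal bookkeeping with full simplicial subsets and can be carried out at the level of quasicategories.
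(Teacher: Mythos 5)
Your proposal is correct and follows essentially the same route as the paper's proof: replace $\Delta^{0}$ by the contractible Kan complex $\mathcal{C}'$ of zero objects, check that $\mathcal{C}'\hookrightarrow\mathcal{C}$ is a categorical fibration so that the strict pullback $q^{-1}(\mathcal{C}')$ computes $\fib(q)$, and then identify $q^{-1}(\mathcal{C}')$ as the full subcategory on $\{b\in\mathcal{B}\mid q(b)\simeq 0\}$. All three steps match the paper's argument, including the identification of the isofibration check as the key point.
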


In particular, for the case of exact functors $\mathcal{B}\xrightarrow{q}\mathcal{C}$ between stable $\infty$-categories, we call $\fib(q)$ (for any choice of a zero object) a \emph{fiber}, or even a \emph{kernel} of $q$. Following the description of Proposition \ref{prop:infinityfiber}, we identify $\fib(q)$ with the stable subcategory of $\mathcal{B}$ generated by $\{b\in\mathcal{B}~|~q(b)\simeq 0\}$. Note that this description is independent of the choice of zero objects or choice of isomorphisms between zero objects, which is not immediate from the definition of $\fib(q)$ as a pullback $\infty$-category.  \\
\indent An important class of exact sequences is provided by semiorthogonal decompositions of stable $\infty$-categories. Given a stable $\infty$-category $\mathcal{B}$ and its stable subcategory $\mathcal{C}$, we denote $\mathcal{C}^{\perp}$ as the full (stable) subcategory of $\mathcal{B}$ generated by $b\in\mathcal{B}$ with $\Map_{\mathcal{B}}(c,b)\simeq *$ for all $c\in\mathcal{C}$, and similarly denote ${}^{\perp}\mathcal{C}$ as the stable subcategory of $\mathcal{B}$ generated by $b\in\mathcal{B}$ with $\Map_{\mathcal{B}}(b,c)\simeq *$ for all $c\in\mathcal{C}$. 

\begin{proposition}\label{prop:splitexact}
Let $\mathcal{A}\to\mathcal{B}\xrightarrow{q}\mathcal{C}$ be a sequence in $\Prl_{\st}$. If $q$ admits a fully faithful right adjoint and $\mathcal{A}\to\mathcal{B}$ induces an equivalence between $\mathcal{A}$ and $\fib(q)$, then the sequence is exact in $\Prl_{\st}$. 
\begin{proof}
The only nontrivial part to check is that $q$ induces $\mathcal{B}/\mathcal{A}\simeq\mathcal{C}$. By assumption, we can identify the fully faithful embedding $\mathcal{A}\to\mathcal{B}$ as $\mathcal{A} = \fib(q)\hookrightarrow\mathcal{B}$. By \cite[5.6]{bgt}, cofiber $\mathcal{B}/\mathcal{A}$ is equivalent to the Bousfield localization of $\mathcal{B}$ at morphisms whose cofibers are in the essential image of $\mathcal{A}$, i.e., $\mathcal{B}/\mathcal{A}$ is equivalent to the stable subcategory of $\mathcal{B}$ generated by objects $b\in\mathcal{B}$ such that $\Map_{\mathcal{B}}(a,b)\simeq *$ for all $a\in\mathcal{A}$. If we denote a right adjoint of $\mathcal{A}\hookrightarrow\mathcal{B}$ by $g'$, then (from $\Map_{\mathcal{B}}(a,b)\simeq\Map_{\mathcal{A}}(a,g'(b))$ for all $a\in\mathcal{A}$) we have $\mathcal{B}/\mathcal{A}\simeq\fib(g')$. Now, via the fully faithful right adjoint $\mathcal{C}\hookrightarrow\mathcal{B}$ of $q$, let us identify $\mathcal{C}$ as a stable subcategory of $\mathcal{B}$. Then by adjunction $\mathcal{A} = \fib(q) = {}^{\perp}\mathcal{C}$. On the other hand, again using the right adjoint $g'$ one immediately computes $({}^{\perp}\mathcal{C})^{\perp} = \fib(g')$. From $\mathcal{C}$ = $({}^{\perp}\mathcal{C})^{\perp}$ \cite[7.2.1.8]{sag}, one has $\mathcal{C} = \fib(g')\simeq\mathcal{B}/\mathcal{A}$. 
\end{proof}
\end{proposition}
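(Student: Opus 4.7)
The plan is to verify directly the three conditions defining an exact sequence in $\Prl_{\st}$: (i) the composite is zero, (ii) $\mathcal{A}\to\mathcal{B}$ is fully faithful, and (iii) the induced functor $\mathcal{B}/\mathcal{A}\to\mathcal{C}$ is an equivalence. Conditions (i) and (ii) are essentially built into the hypothesis, since $\mathcal{A}\simeq\fib(q)$ embeds into $\mathcal{B}$ fully faithfully and objects of $\fib(q)$ are sent to zero by $q$. So the real content is (iii), which amounts to a semiorthogonal decomposition argument.

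First I would record that, since $\mathcal{A}\hookrightarrow\mathcal{B}$ is a fully faithful left adjoint between presentable stable $\infty$-categories, it admits a right adjoint $g'$. I would then invoke \cite[5.6]{bgt} to describe $\mathcal{B}/\mathcal{A}$ as the Bousfield localization of $\mathcal{B}$ obtained by inverting the morphisms whose cofibers lie in the essential image of $\mathcal{A}$; equivalently, $\mathcal{B}/\mathcal{A}$ is identified with the reflective subcategory $\mathcal{A}^{\perp}\subseteq\mathcal{B}$ of objects $b$ with $\Map_{\mathcal{B}}(a,b)\simeq *$ for all $a\in\mathcal{A}$. Using the $(\mathcal{A}\hookrightarrow\mathcal{B}, g')$ adjunction, I would rewrite this as $\mathcal{B}/\mathcal{A}\simeq\fib(g')$.

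Next I would identify $\mathcal{C}$ with a reflective stable subcategory of $\mathcal{B}$ via its fully faithful right adjoint to $q$, and compute both orthogonals. By the $(q,\text{right adjoint})$-adjunction, $\fib(q)$ coincides with ${}^{\perp}\mathcal{C}$, so our hypothesis gives $\mathcal{A}={}^{\perp}\mathcal{C}$. A direct unwinding using the right adjoint $g'$ shows $({}^{\perp}\mathcal{C})^{\perp}=\mathcal{A}^{\perp}=\fib(g')$. Finally, invoking $\mathcal{C}=({}^{\perp}\mathcal{C})^{\perp}$ from \cite[7.2.1.8]{sag} (which applies because the inclusion $\mathcal{C}\hookrightarrow\mathcal{B}$ comes with a left adjoint $q$ in the presentable setting), I conclude $\mathcal{C}\simeq\fib(g')\simeq\mathcal{B}/\mathcal{A}$, and it should be immediate to check that this equivalence is induced by $q$ itself.

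The main obstacle I expect is bookkeeping: making sure that the identification $\mathcal{B}/\mathcal{A}\simeq\fib(g')$ is the one induced by the localization functor, so that one can coherently compare it with the equivalence $\mathcal{C}\simeq\fib(g')$ coming from the right adjoint of $q$, and thus conclude the comparison functor is genuinely $q$ (not merely an abstract equivalence). Everything else is formal manipulation of adjunctions and orthogonal complements in presentable stable $\infty$-categories.
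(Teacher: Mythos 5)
Your proposal is correct and follows essentially the same route as the paper's proof: identifying $\mathcal{B}/\mathcal{A}$ with $\mathcal{A}^{\perp}=\fib(g')$ via \cite[5.6]{bgt}, embedding $\mathcal{C}$ via its right adjoint so that $\mathcal{A}=\fib(q)={}^{\perp}\mathcal{C}$, and concluding by $\mathcal{C}=({}^{\perp}\mathcal{C})^{\perp}$ from \cite[7.2.1.8]{sag}. Your closing remark about verifying that the comparison equivalence is induced by $q$ itself is a reasonable point of care that the paper's proof also glosses over.
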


We call exact sequences of $\Prl_{\st}$ satisfying the conditions of Propostion \ref{prop:splitexact} \emph{split-exact}. The point of Proposition \ref{prop:splitexact} is that the conditions of \cite[5.18]{bgt} (in $\Cat^{\ex(\kappa)}$, with \emph{a priori} given exactness assumption) automatically ensure the sequence is exact in the case of $\Prl_{\st}$ (for instance, see \cite[Recollection 9]{tamme} for the statement). By our discussions on fiber and cofiber $\infty$-categories, a split-exact sequence $\mathcal{A}\to\mathcal{B}\xrightarrow{q}\mathcal{C}$ in $\Prl_{\st}$ is simultaneously a fiber and a cofiber sequence in $\Prl_{\st}$. Also, note that given that the stable subcategory $\mathcal{C}\hookrightarrow\mathcal{B}$ (via right adjoint) is closed under equivalences, the condition of Proposition \ref{prop:splitexact} is precisely saying we have a semiorthogonal decomposition of $\mathcal{B}$ of the form $(\mathcal{A},\mathcal{C})$ \cite[7.2.1.7]{sag}.  

\begin{example}\label{ex:splitexact}
Let $\mathcal{B}$ be a small stable $\infty$-category and let $\mathcal{A}$ be its stable subcategory. By \cite[I.3.5]{ns}, the resulting sequence $\Ind(\mathcal{A})\to\Ind(\mathcal{B})\to\Ind(\mathcal{B}/\mathcal{A})$ in $\Prl_{\st}$ exhibits $\Ind(\mathcal{A})$ as a fiber of the second compact functor $\Ind(\mathcal{B})\to\Ind(\mathcal{B}/\mathcal{A})$, and this functor admits a fully faithful right adjoint. Hence by Proposition \ref{prop:splitexact}, the sequence is a split-exact sequence in $\Prl_{\st}$. Moreover, the right adjoint $\Ind(\mathcal{B}/\mathcal{C})\to\Ind(\mathcal{B})$ is also in $\Prl_{\st}$ (i.e., preserves all small colimits), and corresponds to the Yoneda functor $\mathcal{B}/\mathcal{A}\to\Ind(\mathcal{B})$ sending the image of $b\in\mathcal{B}$ in $\mathcal{B}/\mathcal{A}$ to the filtered colimit $\colim_{a\in\mathcal{A}_{/b}}\cof(a\to b)$ in $\Ind(\mathcal{B})$ (i.e., $\colim_{a\in\mathcal{A}_{/b}}\Map_{\mathcal{B}}(-,\cof(a\to b))\in\mathcal{P}(\mathcal{B})$). This immediately follows from the description of the mapping space of $\mathcal{B}/\mathcal{A}$, as well as from the fact that $\Ind(\mathcal{B}/\mathcal{A})\to\Ind(\mathcal{B})$ is already exact, so it suffices to consider the case of filtered colimits when verifying that the functor commutes with all small colimits. In particular, the unit map for the adjunction associated with the second compact functor on $b\in\mathcal{B}$ takes the form $b\to\colim_{a\in\mathcal{A}_{/b}}\cof(a\to b)$.
\end{example}

\begin{remarkn}\label{rmk:splitexact}
Suppose we are given a split exact sequence $\mathcal{K}\to\mathcal{B}\xrightarrow{\jmath\pb}\mathcal{C}$ of $\Prl_{\st}$, with $\mathcal{K}$ given as a stable subcategory of $\mathcal{B}$ and $\jmath\pb:\mathcal{B}\to\mathcal{C}$ be in $\Prl_{\st,\omega}$ (i.e., a compact functor). Let $\imath\ush:\mathcal{B}\to\mathcal{K}$ and $\jmath\psh:\mathcal{C}\to\mathcal{B}$ be right adjoints of the functors $\mathcal{K}\hookrightarrow\mathcal{B}$ and $\jmath\pb:\mathcal{B}\to\mathcal{C}$ respectively.\\
(1) We have fiber sequences $\imath\ush b\to b\to \jmath\psh\jmath\pb b$ in $\mathcal{B}$ fuctorial on $b\in\mathcal{B}$ \cite[7.2.0.2]{sag}. Also note that $\imath\ush$ commutes with filtered colimits, as it is equivalent to a fiber of the unit map $id\to\jmath\psh\jmath\pb$, whose source and target functors commute with filtered colimits due to the compactness assumption on $\jmath\pb$.\\
(2) Suppose we have a stable subcategory $\mathcal{A}\subseteq\mathcal{K}$ closed under filtered colimits and suppose $\imath\ush$ maps compact objects $\mathcal{B}^{\omega}$ into $\mathcal{A}$. Then, $\imath\ush$ induces an equivalence $\mathcal{K}\to\mathcal{A}$, with an inverse given by inclusion $\mathcal{A}\subseteq\mathcal{K}$. In fact, by assumption $\imath\ush$ induces $\mathcal{B}\simeq \Ind(\mathcal{B}^{\omega})\to\mathcal{A}$, and satisfies $a\simeq \imath\ush a$ for $a\in\mathcal{A}$ by restriction of the unit map (which is an equivalence) on $\mathcal{A}$. For $k\in\mathcal{K} = \fib(\jmath\pb)$, the canonical fiber sequence $\imath\ush k\to k\to\jmath\psh\jmath\pb k$ has zero cofiber part, and hence the counit also induces an equivalence $\imath\ush k\simeq k$ for $k\in\mathcal{K}$.   
\end{remarkn}

We finally recall the notion of localizing invariants in stable setting. A functor $E:\Cat^{\ex}\to\mathcal{T}$ defined on small stable $\infty$-categories and valued in a stable $\infty$-category $\mathcal{T}$ is called \emph{localizing} if it factors through the idempotent-completion $\Cat^{\ex}\to\Cat^{\perf}$ and sends exact sequences of $\Cat^{\ex}$ to fiber sequences of $\mathcal{T}$. Archetypical examples are the nonconnective K-theory functor $K:\Cat^{\ex}\to\Sp$ of Bass-Thomason and various functors related to it via trace maps, e.g., $\operatorname{THH}$ and $\operatorname{TC}$. These are all valued in the $\infty$-category $\Sp$ of spectra, although $\mathcal{T}$ might be any stable $\infty$-category in theory---for instance $\operatorname{THH}$ is canonically valued in $\mathcal{T} = \operatorname{CycSp}$ and is a localizing invariant valued in $\mathcal{T}$. A localizing invariant $E$ is called \emph{finitary} if it commutes with filtered colimits---algebraic K-theory functor $K$ and $\operatorname{THH}$ are standard examples of finitary localizing invariants, while $\operatorname{TC}$ is not finitary. An important characterization of the K-theory functor is given by the corepresentability result of \cite[9.8]{bgt}. There is a finitary localizing invariant $[-]_{\loc}:\Cat^{\ex}\to\mathcal{M}_{\loc}$ into some $\mathcal{M}_{\loc}\in\Prl_{\st}$ which is universal, that any finitary localizing invariants into any $\mathcal{T}\in\Prl_{\st}$ uniquely (up to homotopy) factor through $[-]_{\loc}$, i.e., there is an equivalence $\Fun_{\loc}(\Cat^{\ex},\mathcal{T})\xleftarrow{-\circ[-]_{\loc}}\Fun^{\text{L}}(\mathcal{M}_{\loc},\mathcal{T})$. Via this equivalence, $K$ is described as $K(-)\simeq \map_{\mathcal{M}_{\loc}}([\Perf(\mathbb{S})]_{\loc},[-]_{\loc})$. More generally, mapping spectra $\map_{\mathcal{M}_{\loc}}([\mathcal{C}]_{\loc},[\mathcal{D}]_{\loc})$ in $\mathcal{M}_{\loc}$ from a smooth proper $\mathcal{C}\in\Cat^{\ex}$ (e.g., $\Perf(\mathbb{S})$) can be expressed as a K-theory spectrum (see \cite[9.36]{bgt} and for the additive version see \cite[9.9]{bgt}), and this often enables one to extend results about K-theory to results for localizing invariants in general. Due to its importance, we chose our title to refer to a descent result for the algebraic K-theory functor, although the result holds more generally for any localizing invariants. 

\subsection{Limits of diagrams}
\label{subsec:limit}

First, recall the following basic behaviours of cubical limits. 

\begin{proposition}\cite[1.2.4.13]{ha}
\label{prop:lim=colim}
Let $\mathcal{C}$ be a stable $\infty$-category, and let $n\geq 0$. Suppose we are given a diagram $F\in\Fun((\Delta^{1})^{n},\mathcal{C})$. Then, $F$ is a limit diagram iff $F$ is a colimit diagram. 
\end{proposition}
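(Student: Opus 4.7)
The plan is to proceed by induction on $n$, ultimately reducing the equivalence of limit and colimit conditions to the fact that in a stable $\infty$-category the loop functor $\Omega$ is an autoequivalence exchanging fibers and cofibers. The base cases are immediate: $n = 0$ is vacuous, and for $n = 1$ both the limit and colimit conditions on $F : \Delta^1 \to \mathcal{C}$ reduce to the requirement that $F(0) \to F(1)$ be an equivalence.

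For the inductive step I would introduce the \emph{total fiber} and \emph{total cofiber} of an $n$-cube by
\[\mathrm{tfib}(F) \;:=\; \fib\bigl(F(\bot) \to \lim F|_{\partial_\bot (\Delta^1)^n}\bigr), \qquad \mathrm{tcof}(F) \;:=\; \cofib\bigl(\colim F|_{\partial_\top(\Delta^1)^n} \to F(\top)\bigr),\]
so that by construction $F$ is a limit diagram iff $\mathrm{tfib}(F) \simeq 0$ and a colimit diagram iff $\mathrm{tcof}(F) \simeq 0$. The claim then reduces to establishing the shift relation $\mathrm{tfib}(F) \simeq \Omega^n \mathrm{tcof}(F)$ in $\mathcal{C}$, since $\Omega^n$ is an autoequivalence and preserves the zero object. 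To prove this relation, view $F$ as an arrow $G_0 \to G_1$ of $(n-1)$-cubes $G_i := F|_{\{i\} \times (\Delta^1)^{n-1}}$. Using the pushout decomposition
\[\partial_\bot(\Delta^1)^n \;=\; \bigl(\Delta^1 \times \partial_\bot(\Delta^1)^{n-1}\bigr) \,\cup_{\{1\} \times \partial_\bot(\Delta^1)^{n-1}}\, \bigl(\{1\} \times (\Delta^1)^{n-1}\bigr)\]
of the indexing shape, together with its dual for $\partial_\top$, one computes the recursions $\mathrm{tfib}(F) \simeq \fib\bigl(\mathrm{tfib}(G_0) \to \mathrm{tfib}(G_1)\bigr)$ and $\mathrm{tcof}(F) \simeq \cofib\bigl(\mathrm{tcof}(G_0) \to \mathrm{tcof}(G_1)\bigr)$. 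Applying the inductive hypothesis $\mathrm{tfib}(G_i) \simeq \Omega^{n-1}\mathrm{tcof}(G_i)$ together with the stable identity $\fib(f) \simeq \Omega\,\cofib(f)$ and the fact that $\Omega$ commutes with fibers then yields the desired relation for $F$.

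The principal technical obstacle is justifying the pushout decomposition of the punctured cube and the resulting recursion for $\mathrm{tfib}$: one must verify that the limit over the glued indexing shape decomposes as the expected iterated pullback, which is a cofinality/Mayer--Vietoris-type argument for limits of diagrams indexed by pushouts of simplicial sets. Once this decomposition is in hand, the remainder of the argument is a purely formal consequence of the inductive hypothesis and the autoequivalence property of $\Omega$ on $\mathcal{C}$; in particular, the case $n = 2$ of the statement recovers the classical identification of pullback squares with pushout squares in a stable $\infty$-category.
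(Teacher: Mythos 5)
The paper does not actually prove this statement; it simply cites \cite[1.2.4.13]{ha}, and your argument is essentially the standard proof of that cited result: reduce ``limit diagram'' and ``colimit diagram'' to the vanishing of $\mathrm{tfib}(F)$ and $\mathrm{tcof}(F)$ respectively, and relate the two via the shift $\mathrm{tfib}(F)\simeq\Omega^{n}\mathrm{tcof}(F)$ proved by induction using the map-of-$(n-1)$-cubes decomposition. Your proof is correct; the one point to handle carefully is that the inductive equivalence $\mathrm{tfib}(G)\simeq\Omega^{n-1}\mathrm{tcof}(G)$ must be constructed as a natural equivalence of functors on $\Fun((\Delta^{1})^{n-1},\mathcal{C})$, not merely objectwise, so that it may legitimately be applied to the morphism $G_{0}\to G_{1}$ inside $\fib(-)$; this follows by carrying the naturality through the same induction. (A small quibble: the case $n=0$ is not vacuous --- it asserts that an object is terminal iff it is initial, which holds because $\mathcal{C}$ is pointed.)
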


\begin{proposition}
\label{prop:fiberlimcrit}
Let $\mathcal{C}$ be a stable $\infty$-category, and let $n\geq1$. Suppose we are given a diagram $F\in\Fun((\Delta^{1})^{n},\mathcal{C})$, which can be identified with an object $F'$ of $\Fun\left(\Delta^{1},\mathcal{C}^{(\Delta^{1})^{n-1}}\right)$ by choosing a component $\Delta^{1}$ of $(\Delta^{1})^{n}$. Take any choice of a fiber functor $\Fun\left(\Delta^{1},\mathcal{C}^{(\Delta^{1})^{n-1}}\right)\xrightarrow{\fib_{n-1}}\Fun((\Delta^{1})^{n-1},\mathcal{C})$. Then, $F$ is a limit diagram if and only if $\fib_{n-1}(F')$ is a limit diagram. 
\begin{proof}
This follows immediately from \cite[1.2.4.15]{ha}. More precisely, it treats the (more general) case of colimits over any simplicial set $K$ whose shapes in $\mathcal{C}$ admit colimits. In our case, $K$ can be taken as the finite simplicial set satisfying $K^{\lhd} = (\Delta^{1})^{n-1}$. Combined with Proposition \ref{prop:lim=colim} above, we have the result. 
\end{proof}
\end{proposition}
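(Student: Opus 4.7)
The plan is to deduce this as an immediate consequence of \cite[1.2.4.15]{ha}, which characterizes, in a stable $\infty$-category, when a diagram of the form ``morphism between two $K^{\lhd}$-diagrams'' extends to a (co)limit diagram in terms of the fiber (or cofiber) $K^{\lhd}$-diagram. The conversion between limit and colimit formulations is handled by Proposition \ref{prop:lim=colim}, which tells us that cubical diagrams in $\mathcal{C}$ are limit diagrams precisely when they are colimit diagrams.

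Concretely, I would first use the chosen coordinate to decompose $(\Delta^{1})^{n}\cong\Delta^{1}\times(\Delta^{1})^{n-1}$, so that $F$ is identified with a morphism $F':F_{0}\to F_{1}$ of $(n-1)$-cube diagrams. Writing $(\Delta^{1})^{n-1}=K^{\lhd}$ with $K$ the punctured $(n-1)$-cube, the diagram $F$ is now encoded as a diagram on a shape of the type treated by the cited result of \cite{ha}, with the role of the ``simplicial set $K$'' in the citation played by our $K$. The cited result then directly asserts, after dualizing colimits to limits, that $F$ is a limit diagram on $(\Delta^{1})^{n}$ if and only if the fiber $(n-1)$-cube over $K^{\lhd}=(\Delta^{1})^{n-1}$ is a limit diagram; this fiber $(n-1)$-cube is $\fib_{n-1}(F')$, uniquely determined up to natural equivalence, so the choice of fiber functor is immaterial.

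I do not expect any substantive obstacle. The only care needed is to match variance conventions between \cite[1.2.4.15]{ha} (stated for colimits over $K^{\rhd}$) and our statement (limits over $(\Delta^{1})^{n}$ written as a cone on the punctured cube), which the combination of stability of $\mathcal{C}$ and Proposition \ref{prop:lim=colim} handles automatically; the equivalence claim is therefore symmetric in the initial/terminal roles and the choice of which $\Delta^{1}$ factor is singled out.
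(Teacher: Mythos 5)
Your proposal is correct and takes essentially the same route as the paper: both identify $F$ along the chosen coordinate with a morphism of $(n-1)$-cubes, apply \cite[1.2.4.15]{ha} with $K$ the punctured $(n-1)$-cube (so that $K^{\lhd}=(\Delta^{1})^{n-1}$), and use Proposition \ref{prop:lim=colim} to reconcile the colimit/cofiber formulation of the citation with the limit/fiber formulation of the statement. The level of detail you supply on the decomposition and on the independence of the choice of fiber functor matches, and slightly exceeds, what the paper itself records.
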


Let us investigate a relationship between cubical and semi-cosimplicial limits. View $\mathcal{P}(\mathbb{N})$ as a small category via its poset structure determined by inclusions of subsets. Also, recall that the standard semi-simplicial category $\Delta_{s}$ is the subcategory of the standard simplicial category $\Delta$ with the same objects but only with injective order-preserving maps as morphisms (in other words, degeneracy maps are dropped), cf. \cite[6.5.3.6]{htt}. Let $\mathcal{P}(\bbN)\backslash\emptyset\xrightarrow{c}\Delta_{s}$ be the functor determined by sending $T = (0\leq i_{0}<\cdots<i_{r})\subseteq\bbN$ to $[r]$, and $T\backslash i_{k}\hookrightarrow T$ to the $k$-th face map $[r-1]\xrightarrow{d^{k}}[r]$. For each $n\geq 0$, it restricts to functors $\mathcal{P}([n])\backslash\emptyset\xrightarrow{c_{n}}(\Delta_{s})_{\leq n}$. 

\begin{lemma}\label{lem:cartfib}
The functor $\mathcal{P}(\bbN)\backslash\emptyset\xrightarrow{c}\Delta_{s}$, and hence $\mathcal{P}([n])\backslash\emptyset\xrightarrow{c_{n}}(\Delta_{s})_{\leq n}$ for each $n\geq0$, is a Cartesian fibration of ordinary categories. 
\begin{proof}
Suppose we are given $\emptyset\neq T = (0\leq i_{0}<\cdots<i_{r'})\subseteq\bbN$ and $[r]\xrightarrow{\alpha}[r']$ in $\Delta_{s}$. We have to check that there is a $c$-Cartesian lifting of $\alpha$ in $\mathcal{P}(\mathbb{N})\backslash\emptyset$ whose target is $T$ \cite[tag 01RN]{kerodon}. As $\alpha$ is injective, we have a well-defined $S:=(0\leq i_{\alpha(0)}<\cdots<i_{\alpha(r)})\subseteq T$ such that $c(S) = [r]$. Note that any inclusion $S' = (0\leq j_{0}<\cdots<j_{s})\subseteq T$ with $c(S') = [s]$ maps to $[s]\xrightarrow{\alpha'}[r']$ in $\Delta_{s}$ in a way that $j_{k} = i_{\alpha'(k)}$. Thus, the morphism $S\subseteq T$ lifts $\alpha$. To check $S\subseteq T$ is $c$-Cartesian, suppose we are given $S'\subseteq T$ as before and a morphism $[s]\xrightarrow{\beta}[r]$ in $\Delta_{s}$ such that $S'\subseteq T$ maps to $\alpha\circ\beta$ by $c$. Then, $j_{k} = i_{\alpha(\beta(k))}\in S$ for each $k\in[s]$, and hence we know $S'\subseteq S$. The image $[s]\xrightarrow{\gamma}[r]$ of the morphism $S'\subseteq S$ satisfies $j_{k} = i_{\alpha(\gamma(k))}$ for each $k\in[s]$. As $\alpha$ is injective, we know $\gamma = \beta$, i.e., $S'\subseteq S$ lifts $\beta$. 
\end{proof}
\end{lemma}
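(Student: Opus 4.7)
The plan is to verify the defining property of a Cartesian fibration directly by writing down an explicit Cartesian lift. The key observation I would lean on is that, under $c$, an inclusion $S \subseteq T$ of nonempty finite subsets of $\bbN$ corresponds to the unique order-preserving injection $[|S|-1] \to [|T|-1]$ that records the positions of the elements of $S$ in the increasing enumeration of $T$. In particular, $c$ is faithful on each hom-set, which automatically upgrades existence of factorizations to unique existence.

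First I would fix $T = \{i_{0} < \cdots < i_{r'}\}$ and a morphism $\alpha : [r] \to [r']$ in $\Delta_{s}$. The natural candidate for a Cartesian lift is $S := \{i_{\alpha(0)} < \cdots < i_{\alpha(r)}\} \subseteq T$; this set has cardinality $r+1$ precisely because $\alpha$ is injective, and the ordering is correct because $\alpha$ is order-preserving. Unraveling the definition of $c$ on the inclusion $S \hookrightarrow T$ shows that $c(S \hookrightarrow T) = \alpha$, so the proposed lift is at least an honest lift.

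For the Cartesian property, suppose I am given $S' = \{j_{0} < \cdots < j_{s}\}$ with an inclusion $S' \subseteq T$ and a morphism $\beta : [s] \to [r]$ in $\Delta_{s}$ such that $c(S' \subseteq T) = \alpha \circ \beta$. Reading off the definition of $c$ on the inclusion forces $j_{k} = i_{\alpha(\beta(k))}$ for every $k \in [s]$. In particular each $j_{k}$ lies in $S$, so $S' \subseteq S$ holds as subsets of $\bbN$, which supplies the sought factorization. The induced morphism $S' \subseteq S$ corresponds under $c$ to the unique order-preserving injection $\gamma : [s] \to [r]$ with $j_{k} = i_{\alpha(\gamma(k))}$ for all $k$; injectivity of $\alpha$ then gives $\gamma = \beta$. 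Uniqueness of the factorizing morphism is automatic because $\mathcal{P}(\bbN)\backslash\emptyset$ is a poset.

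Finally, the restriction to $c_{n} : \mathcal{P}([n]) \backslash \emptyset \to (\Delta_{s})_{\leq n}$ is immediate, since whenever $T \subseteq [n]$ the constructed $S = \{i_{\alpha(0)} < \cdots < i_{\alpha(r)}\}$ is also contained in $[n]$. The only real care needed is the bookkeeping that translates inclusions of index sets into order-preserving injections of the form $[|S|-1]\to[|T|-1]$; once this dictionary is set up, there is no genuine obstacle, and I do not expect any delicate step.
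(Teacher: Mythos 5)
Your proposal is correct and follows essentially the same argument as the paper: the same candidate Cartesian lift $S=\{i_{\alpha(0)}<\cdots<i_{\alpha(r)}\}\subseteq T$, the same identification of $c$ on inclusions, and the same use of injectivity of $\alpha$ to conclude $\gamma=\beta$. The extra remark that uniqueness of the factorization is automatic because the source is a poset is a harmless addition.
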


We explain a slight generalization of \cite[Proposition 40.2]{cs} in an $\infty$-categorical setting for our purpose. Loosely speaking, this interprets an intergration along the fibers formula for (co)limits over Grothendieck constructions. Given an $\infty$-category $\mathcal{C}$, let us denote the Grothendieck construction by $\Fun(\mathcal{C},\Cat_{\infty})\underset{~}{\xrightarrow{\Gr}}\co\CFib(\mathcal{C})$, and the dual construction by $\Fun(\mathcal{C}^{\op},\Cat_{\infty})\underset{~}{\xrightarrow{\Gr^{-}}}\CFib(\mathcal{C})$. (Here, $\co\CFib(\mathcal{C})$ and $\CFib(\mathcal{C})$ denotes the $\infty$-category of coCartesian fibrations and Cartesian fibrations over $\mathcal{C}$ respectively.) 

\begin{proposition}\label{prop:fiberlim}
Let $H\in\Fun(\mathcal{C},\Cat_{\infty})$ be a functor from an $\infty$-category $\mathcal{C}$, and let $F\in\Fun(\Gr(H),\mathcal{E})$ be a functor from $\Gr(H)$ into an $\infty$-category $\mathcal{E}$. Suppose $\mathcal{E}$ admits colimits indexed over $H(c)$ for each $c\in\mathcal{C}$, as well as over $\mathcal{C}$. Then, a colimit of $F$ exists\footnote{Note that by the proof below, we can weaken the existence of colimits condition slightly: assume $\mathcal{E}$ admits $H(c)$-indexed colimits for all $c$. Then, it suffices to require the LHS colimit $\colim_{\mathcal{C}} p\lsh F$ of the formula exists, rather than all colimits over $\mathcal{C}$.}. Moreover, there exists a functor $p\lsh F\in\Fun(\mathcal{C},\mathcal{E})$ such that each $p\lsh F(c)$ is equivalent to a colimit of $F|_{H(c)}$, and there is an equivalence between colimits $\colim_{\mathcal{C}} p\lsh F\simeq \colim_{\Gr(H)} F$ in $\mathcal{E}$ canonical on $F$. 
\begin{proof}
Let $\Gr(H)\xrightarrow{p}\mathcal{C}$ be a coCartesian fibration corresponding to $H$, and let $\Fun(\mathcal{C},\mathcal{E})\xrightarrow{p\pb}\Fun(\Gr(H),\mathcal{E})$ be the induced functor. By assumption its left adjoint, the functor of left Kan extensions $\Fun(\Gr(H),\mathcal{E})\xrightarrow{p\lsh}\Fun(\mathcal{C},\mathcal{E})$ exists, and is computed pointwisely. More precisely, by \cite[1.16]{mg}, each $H(c)$ is canonically equivalent to $\Gr(H)\times_{\mathcal{C}}c$, and this fiber product as $\infty$-categories is equivalent to the fiber product computed as quasicategories (simplicial sets). Thus \cite[4.3.3.10]{htt} (with $q = id_{S}$ and $\delta = p$) applies to ensure $p\lsh$ exists, and satisfies $(p\lsh F)(c)\simeq \colim(H(c)\hookrightarrow \Gr(H)\xrightarrow{F}\mathcal{E})$. Again by assumption a left Kan extension $s\lsh p\lsh F\simeq\colim_{\mathcal{C}}p\lsh F$ of $p\lsh F$ along $\mathcal{C}\xrightarrow{s}\Delta^{0}$ exists, and it gives a left Kan extension of $F$ along $s\circ p:\Gr(H)\to\Delta^{0}$.     
\end{proof}
\end{proposition}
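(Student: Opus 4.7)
The plan is to realize $\colim_{\Gr(H)}F$ as an iterated colimit by factoring $\Gr(H)\to\Delta^{0}$ through the coCartesian fibration $p:\Gr(H)\to\mathcal{C}$ classified by $H$. First I would construct $p\lsh F\in\Fun(\mathcal{C},\mathcal{E})$ as a pointwise left Kan extension of $F$ along $p$. Then I would identify $\colim_{\mathcal{C}}p\lsh F$ with $\colim_{\Gr(H)}F$ via the transitivity $(s\circ p)\lsh F\simeq s\lsh(p\lsh F)$, where $s:\mathcal{C}\to\Delta^{0}$ is the terminal projection.

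To construct $p\lsh F$, I would apply the pointwise formula for left Kan extensions \cite[4.3.2.15]{htt}, which expresses $(p\lsh F)(c)$ as a colimit of $F$ pulled back to the comma $\infty$-category $\Gr(H)\times_{\mathcal{C}}\mathcal{C}_{/c}$. The key reduction uses that $p$ is a coCartesian fibration: the canonical inclusion of the fiber $H(c)\simeq\Gr(H)\times_{\mathcal{C}}\{c\}\hookrightarrow\Gr(H)\times_{\mathcal{C}}\mathcal{C}_{/c}$ is cofinal, with a homotopy inverse informally given by taking coCartesian lifts $(x,f:p(x)\to c)\mapsto f\lsh x$. This is precisely the content of \cite[4.3.3.10]{htt}, whose hypotheses I would verify by noting that the fiber product of $\infty$-categories coincides with the simplicial-set fiber product since $p$ is a categorical fibration. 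The result is $(p\lsh F)(c)\simeq\colim_{H(c)}F|_{H(c)}$, and this colimit exists by the assumption on $\mathcal{E}$.

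The composition step is then formal: the left-hand side of $(s\circ p)\lsh F\simeq s\lsh(p\lsh F)$ is by definition $\colim_{\Gr(H)}F$, while the right-hand side is $\colim_{\mathcal{C}}p\lsh F$, which exists by the remaining hypothesis on $\mathcal{C}$-indexed colimits. Naturality in $F$ is automatic from the functoriality of left Kan extensions on diagram categories. The main obstacle is the cofinality reduction in the second paragraph, as this is where the coCartesian fibration structure is essentially used; without it one would be stuck with the comma-category formula, not the fiber formula needed to match the shape $H(c)$ in the statement of the proposition.
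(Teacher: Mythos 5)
Your proposal is correct and follows essentially the same route as the paper: both construct $p_{!}F$ pointwise, invoke \cite[4.3.3.10]{htt} to replace the comma-category colimit by the fiber colimit over $H(c)\simeq\Gr(H)\times_{\mathcal{C}}\{c\}$ (the paper citing \cite[1.16]{mg} for this identification), and conclude by transitivity of left Kan extensions along $s\circ p$. The only cosmetic difference is that your ``homotopy inverse'' via coCartesian lifts is really an adjoint to the fiber inclusion, which is exactly what makes it cofinal and is the content of the cited result.
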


\begin{remarkn}
(1) Informally speaking, Proposition \ref{prop:fiberlim} says that there is a canonical equivalence 
\begin{align*}
\colim_{c\in\mathcal{C}}\left(\colim\left(H(c)\hookrightarrow\Gr(H)\xrightarrow{F}\mathcal{E}\right)\right)\simeq \colim_{\Gr(H)}F. 
\end{align*}  
(2) Dually, given $H\in\Fun(\mathcal{C}^{\op},\Cat_{\infty})$ and $F\in\Fun(\Gr^{-}(H),\mathcal{E})$ such that $\mathcal{E}$ admits limits indexed over $H(c)$ (for all $c\in\mathcal{C}$) and $\mathcal{C}$, we know a limit of $F$ exists, and have a canonical equivalence
\begin{align*}
\lim_{\Gr^{-}(H)}F\simeq \lim_{c\in\mathcal{C}}\left(\lim\left(H(c)\hookrightarrow\Gr^{-}(H)\xrightarrow{F}\mathcal{E}\right)\right).
\end{align*}
More precisely, we have an equivalence between limits $\lim_{\Gr^{-}(H)}F\simeq \lim_{\mathcal{C}}p\psh F$ in $\mathcal{E}$, and the functor $p\psh F$ is given as a right Kan extension of $F$ along the Cartesian fibration $\Gr^{-}(H)\xrightarrow{p}\mathcal{C}$ corresponding to $H$. 
\end{remarkn}

\begin{corollary}\label{cor:fiberlim}
Let $F:(\Delta^{1})^{n+1}\backslash\emptyset\simeq \N\mathcal{P}([n])\backslash\emptyset\to\mathcal{T}$ be a $n$-cubical diagram (without the initial vertex) valued in a finitely complete $\infty$-category $\mathcal{T}$. Then, its limit $\lim_{(\Delta^{1})^{n+1}\backslash\emptyset}F$ exists, and is equivalent to $\lim_{[r]\in(\Delta_{s})_{\leq n}}\left(\prod_{0\leq i_{0}<\cdots<i_{r}\leq n}F(i_{0},...,i_{r})\right)$. 
\begin{proof}
Consider the Cartesian fibration $\mathcal{P}([n])\backslash\emptyset\xrightarrow{c_{n}}(\Delta_{s})_{\leq n}$ of Lemma \ref{lem:cartfib}. Note that each fiber category $c_{n}\inv([r])$ is finite discrete. Also, note that the nerve $\N(\Delta_{s})_{\leq n}$ viewed as a simplicial set is finite. Indeed if $i>n$, then each $i$-simplex in $\Fun([i],(\Delta_{s})_{\leq n})$ viewed as a composition of $i$-number of morphisms must contain an identity morphism. For any ordinary category $\mathcal{C}$, an $i$-simplex of $\N\mathcal{C}$ is nondegenerate precisely if it can be represented by a sequence $x_{0}\to\cdots\to x_{i}$ of morphisms which does not include identities, so nondegenerate simplices of $\N(\Delta_{s})_{\leq n}$ are concentrated in degrees $\leq n$, and hence their number is finite. By Proposition \ref{prop:fiberlim}, a limit $\lim_{(\Delta^{1})^{n+1}\backslash\emptyset}F$ exists, and is equivalent to $\lim((c_{n})\psh F)\simeq \lim_{[r]\in(\Delta_{s})_{\leq n}}\left(\prod_{0\leq i_{0}<\cdots<i_{r}\leq n}F(i_{0},...,i_{r})\right)$. 
\end{proof}
\end{corollary}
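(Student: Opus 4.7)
The plan is to apply the limit (dual) version of Proposition \ref{prop:fiberlim}, stated as part (2) of the subsequent remark, to the Cartesian fibration $\mathcal{P}([n])\backslash\emptyset\xrightarrow{c_n}(\Delta_s)_{\leq n}$ furnished by Lemma \ref{lem:cartfib}. Straightening $c_n$ into a functor $H:(\Delta_s)_{\leq n}^{\op}\to\Cat_\infty$, the pointwise formula for the right Kan extension $(c_n)\psh F$ together with the limit-over-the-base formula will yield
\begin{align*}
\lim_{\mathcal{P}([n])\backslash\emptyset} F \simeq \lim_{[r]\in(\Delta_s)_{\leq n}}\Bigl(\lim_{c_n\inv([r])} F|_{c_n\inv([r])}\Bigr),
\end{align*}
which matches the claimed formula once the inner limits are identified with the asserted products.

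First I would analyze the fibers. By the construction of $c_n$ in Lemma \ref{lem:cartfib}, the preimage $c_n\inv([r])$ is the full subcategory of $\mathcal{P}([n])\backslash\emptyset$ spanned by the $(r+1)$-element subsets $0\leq i_0<\cdots<i_r\leq n$ of $[n]$, and any inclusion between two such subsets lying over an identity of $(\Delta_s)_{\leq n}$ must itself be an identity (both sides have cardinality $r+1$). Hence each fiber is finite and discrete, so the corresponding limit in $\mathcal{T}$ is the finite product $\prod_{0\leq i_0<\cdots<i_r\leq n}F(i_0,\ldots,i_r)$, which exists by finite completeness of $\mathcal{T}$.

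Next I would verify that the outer limit exists by showing that the nerve $\N(\Delta_s)_{\leq n}$ has finitely many nondegenerate simplices. A nondegenerate simplex is represented by a sequence of composable non-identity order-preserving injections, and each such injection strictly increases source dimension; hence nondegenerate simplices are concentrated in degrees $\leq n$, and their total number is finite. So the outer limit is a finite limit, again available in $\mathcal{T}$ by hypothesis, and Proposition \ref{prop:fiberlim} assembles the two steps into the claimed equivalence.

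The main obstacle I anticipate is technical rather than conceptual: verifying that the pointwise right Kan extension formula genuinely identifies $((c_n)\psh F)([r])$ with the naive fiber limit $\lim F|_{c_n\inv([r])}$, as opposed to a homotopically corrected version involving a comma category over $[r]$. This amounts to invoking (the dual of) \cite[4.3.3.10]{htt}, already used inside the proof of Proposition \ref{prop:fiberlim}, together with the observation that the fiber of a Cartesian fibration of ordinary $1$-categories computed set-theoretically coincides with the fiber computed in $\Cat_\infty$. With these identifications in place, the corollary is essentially a bookkeeping consequence of Proposition \ref{prop:fiberlim}.
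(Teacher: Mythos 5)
Your proposal follows the same route as the paper: apply the dual of Proposition \ref{prop:fiberlim} to the Cartesian fibration $c_{n}$ of Lemma \ref{lem:cartfib}, identify the fibers as finite discrete categories so the fiberwise limits become the stated products, and observe that $\N(\Delta_{s})_{\leq n}$ is a finite simplicial set so the outer limit exists in the finitely complete $\mathcal{T}$. The technical point you flag about the pointwise Kan extension formula versus comma categories is exactly what the paper disposes of inside the proof of Proposition \ref{prop:fiberlim} via \cite[4.3.3.10]{htt} and \cite[1.16]{mg}, so your argument is correct and complete.
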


\subsection{Sheaves of modules}
\label{subsec:modules}

We briefly recall some conventions and results of \cite{sag} about sheaves of modules which we will use here. Informally speaking, we consider modules over Noetherian (ordinary) schemes in a derived sense. In particular, functors between module categories should be read as derived ones of their classical counterparts unless otherwise specified (e.g., in construction of adele rings). More precisely, given a qcqc (ordinary) scheme\footnote{Quasicompact quasiseparated schemes. For example, affine schemes and Noetherian schemes are qcqs.} $X$, we consider the symmetric monoidal $\infty$-category $\Mod(\mathscr{O}_{X})\in\Prl_{\st}$ of sheaves of $\mathscr{O}_{X}$-module spectra, and likewise $\Mod(\mathscr{O})$ for any Zariski sheaf $\mathscr{O}$ of discrete commutative rings on $X$ \cite[2.1.0.1]{sag}. The $\infty$-category $\Mod(\mathscr{O})$ has a canonical t-structure specified via connective $\mathscr{O}$-modules, and its heart $\Mod(\mathscr{O})^{\heartsuit}$ recovers the abelian category of discrete $\mathscr{O}$-modules.
\begin{remarkn}\label{rmk:discretemod}
The canonical functor $\mathcal{D}(\Mod(\mathscr{O})^{\heartsuit})\to\Mod(\mathscr{O})$ from the derived $\infty$-category induced from the canonical embedding of the heart is a fully faithful embedding, and identifies $\mathcal{D}(\mathscr{O}) = \mathcal{D}(\Mod(\mathscr{O})^{\heartsuit})$ with the stable subcategory spanned by $\mathscr{O}$-modules whose underlying sheaves of spectra are hypercomplete \cite[2.1.2.3]{sag}. In particular if the underlying $\infty$-topos $\Shv_{\mathcal{S}}(X_{\text{Zar}})$ associated with $X$ is hypercomplete, then it is an equivalence. For example, the assumption holds if $X$ is Noetherian of finite Krull dimension. 
\end{remarkn}

The theory of quasicoherent sheaves \cite[2.2]{sag} applied to $X = (X,\mathscr{O}_{X})$ gives $\QCoh(X)\in\Prl_{\st}$ as a stable subcategory of $\Mod(\mathscr{O}_{X})$, which inherits a symmetric monoidal structure and a t-structure recovering the abelian category of discrete quasicoherent sheaves on $X$ as $\QCoh(X)^{\heartsuit}\subseteq\Mod(\mathscr{O}_{X})^{\heartsuit}$. For $X = \Spec R$ affine we recover $\QCoh(\Spec R)\simeq \Mod(HR)$, which we simply write as $\Mod(R)$ \cite[2.2.3.3]{sag}. 
\begin{remarkn}
For a qcqs scheme $X$, let $\mathcal{D}_{\QCoh}(X) = \mathcal{D}_{\QCoh}(\Mod(\mathscr{O}_{X})^{\heartsuit})$ be the stable subcategory of $\mathcal{D}(\mathscr{O}_{X}) = \mathcal{D}(\Mod(\mathscr{O}_{X})^{\heartsuit})$ spanned by $\mathscr{O}_{X}$-modules whose homologies are discrete quasicoherent sheaves. Then, the embedding of Remark \ref{rmk:discretemod} induces an equivalence $\mathcal{D}_{\QCoh}(X)\simeq\QCoh(X)$ \cite[2.2.6.2]{sag}. 
\end{remarkn}

Let $R$ be a discrete commutative ring\footnote{Or in fact any connective $\mathbb{E}_{\infty}$-ring.}. Recall that the $\infty$-category of perfect $R$-modules is the smallest stable subcategory $\Perf(R)$ of $\Mod(R)$ containing $R$ and closed under retractions. In fact, $\Perf(R) = \Mod(R)^{\text{dual}} = \Mod(R)^{\omega}$, i.e., perfect modules are precisely dualizable modules, and are again precisely compact modules \cite[7.2.4]{ha}. For qcqs schemes such identification remains true. Let $X$ be a qcqs scheme, and let $\Perf(X)$ be the stable subcategory of $\QCoh(X)$ spanned by quasicoherent modules affine-locally perfect. Equivalently, $\Perf(X)\simeq\lim_{\Spec R\to X}\Perf(R)$; see also \cite[tag 08CM]{stacks} for a classical approach. Then $\Perf(X) = \QCoh(X)^{\text{dual}} = \QCoh(X)^{\omega}$ \cite[6.2.6.2, 9.1.5.5]{sag}, where $\QCoh(X)^{\text{dual}}$ is the full subcategory spanned by dualizable objects in the symmetric monoidal $\infty$-category $\QCoh(X)$. Note that the compactness assumption (qcqs property) is needed precisely for the identification of compactness and dualizability. We in particular know the functor $X\mapsto \QCoh(X)$ on qcqs schemes is valued in $\Prl_{\st,\omega}$.\\
\indent Let $E:\Cat^{\ex}\to\mathcal{T}$ be a localizing invariant valued in a stable $\infty$-category $\mathcal{T}$. Composing with $\Perf(-):\Sch_{\text{qcqs}}^{\op}\to\Cat^{\perf}\subseteq\Cat^{\ex}$ defined via pullbacks, we can view $E$ as a functor defined on (the nerve of) the category of qcqs schemes, and we set $E(X) := E(\Perf(X))$. After restriction, it is in particular defined on the category of discrete commutative rings, and we denote $E(R) := E(\Perf(R))$. As $E$ is localizing, it is an additive invariant \cite[6.1]{bgt} and in particular $E$ commutes with finite products of rings. Note that localizing (or additive) invariants however do not commute with finite limits, even pullbacks of rings in general. 
\begin{remarkn}\label{rmk:variousperfect}
(1) For product $\prod_{i\in I}R_{i}$ of commutative rings (indexed by a small set $I$ which might not be finite), Bhatt's theorem \cite{bhatt} guarantees the map $\Perf(\prod_{i\in I}R_{i})\to\prod_{i\in I}\Perf(R_{i})$ induced by projections of rings is a fully faithful embedding, cf. \cite[Theorem 3.15]{adelic}.\\
(2) On (almost-)perfect modules over Noetherian rings, extension of scalars by completions realize derived completions, see \cite[7.2 and 7.3]{sag}. Let $R$ be a Noetherian commutative ring and $I$ be its ideal. For $C\in\Perf(R)$ the canonical map $R^{\wedge}_{I}\otimes_{R}C\to C^{\wedge}_{I}$ is an equivalence in $\Mod(R)$ \cite[7.3.5.7]{sag}. In particular $R^{\wedge}_{I}\otimes_{R}C\in\Mod^{\Cpl(I)}(R)$ is an $I$-complete object.\\
(3) There is a version of derived Nakayama lemma for $I$-complete modules. Let $R$ be a (discrete) Noetherian commutative ring and let $I$ be an ideal of $R$. Let $C\in\Mod^{\Cpl(I)}(R)$ be an $I$-complete module. If $C$ in $\Mod(R)$ satisfies $R/I\otimes_{R}C\simeq 0$, then $C\simeq 0$ \cite[0G1U]{stacks}. 
\end{remarkn}

\subsection{Adeles on Noetherian schemes}
\label{subsec:adeles}

Let $X$ be a Noetherian scheme. Its underlying set of points admits a canonical partial order given by specializations of points, i.e., for points $p$ and $q$ of $X$, we say $p\leq q$ if $p\in\overline{q}$ (i.e., if $p$ is a specialization of $q$). We write the simplicial set obtained as the nerve of the poset structure on $X$ as $S_{\cdot}(X) = \N(X)$. By definition for each $r\geq 0$, one has $S_{r}(X) = \{(p_{0},...,p_{r})\in X^{r+1}~|~p_{0}\leq p_{1}\leq\cdots\leq p_{r}\}$. We also consider the semi-simplicial set $S_{\cdot}^{\text{red}}(X)$ consisting of $S_{r}^{\text{red}}(X) = \{(p_{0},...,p_{r})\in X^{r+1}~|~p_{0}< p_{1}<\cdots<p_{r}\}$ for each $r\geq 0$. After restriction to $\Delta_{s}^{\op}$ we can view $S_{\cdot}(X)$ as a semi-simplicial set, and $S^{\text{red}}_{\cdot}(X)$ is defined to be its semi-simplicial subset with vertices specified as above. \\
\indent For each $r\geq 0$, a subset $T\subseteq S_{r}(X)$, and a quasicoherent sheaf $F\in\QCoh(X)^{\heartsuit}$, we can define the sheaf of adeles $A_{T}(F) = A(T,F)$ as an object of $\Mod(\mathscr{O}_{X})^{\heartsuit}$. Below, $T_{q} = \{(p_{0},...,p_{r})\in T~|~p_{r} = q\}$ and $h_{sq}:\Spec\mathscr{O}_{q}/\mathfrak{m}_{q}^{r}\to X$ is the canonical map for each $q\in X$ and $s\geq 0$. Functors $(h_{sq})\psh$ and $h_{sq}\pb$ in this subsection are underived pushforwards and pullbacks respectively. 
\begin{definition}
For each $r\geq 0$ and $T\subseteq S_{r}(X)$, we let $A_{T} = A(T,-):\QCoh(X)^{\heartsuit}\to\Mod(\mathscr{O}_{X})^{\heartsuit}$ be the exact functor uniquely characterized by the following three conditons \cite[Proposition 2.1.1]{huber}, \cite[Definition 1.4]{adelic}: \\
(1) $A_{T}$ commutes with filtered colimits\footnote{Hence it suffices to determine values of $A_{T}$ on each coherent sheaves on $X$.}. \\
(2) If $r = 0$, then $A_{T}(F) = \prod_{q\in T}\lim_{s\geq 0}(h_{sq})\psh h_{sq}\pb F$ for each coherent sheaf $F$. \\
(3) If $r>0$, then $A_{T}(F) = \prod_{q\in X}\lim_{s\geq 0} A_{T_{q}}((h_{sq})\psh h_{sq}\pb F)$ for each coherent sheaf $F$.
\end{definition}

\noindent By taking local sections, we recover abelian groups of adeles associated with $T$ and $F$ restricted on each opens. Also, by construction each functor $A_{T}$ is lax symmetric monoidal, so each $A_{T}(\mathscr{O}_{X})$ is canonically a sheaf of commutative $\mathscr{O}_{X}$-algebras. We will often omit $\mathscr{O} = \mathscr{O}_{X}$ in the notation, and simply write as $A_{T} = A_{T}(\mathscr{O}) = A(T,\mathscr{O})$. Note that over an affine $X = \Spec R$, global sections rings $\Gamma(A_{T})$ are flat over $R$ due to exactness of $A_{T}$ \cite[Lemma 1.10]{adelic}.\\
\indent It turns out that the construction of $A_{T}(F)$ is also sufficiently functorial on $T$. In fact, for each $F\in\QCoh(X)^{\heartsuit}$ the association $[r]\mapsto A^{r}(X,F):= A(S_{r}(X),F)$ assembles to a cosimplicial object $A^{\sbullet}(X,F)$ of $\Mod(\mathscr{O}_{X})^{\heartsuit}$, and likewise the association $[r]\mapsto A^{r}_{\text{red}}(X,F):= A(S^{\text{red}}_{r}(X),F)$ assembles to a semi-cosimplicial object $A^{\sbullet}_{\text{red}}(X,F)$ of $\Mod(\mathscr{O}_{X})^{\heartsuit}$ \cite[Theorem 2.4.1]{huber}, \cite[Theorem 8.12]{intro}. See also \cite[Proposition 1.7]{adelic}.

\begin{remarkn}\label{rmk:functoriality}
Let us briefly review the functoriality of $A_{T}(F)$ on $T$, and in particular explain how the semi-cosimplicial object $A^{\sbullet}_{\text{red}}(X,F)$ is defined. It suffices to describe maps between local sections, and after restriction we are reduced to the case of global sections. So let us abuse notations slightly and understand $A_{T}(F)$ as the module of global sections. By \cite[Proposition 2.1.4]{huber} for each $T\subseteq S_{r}^{\text{red}}(X)$, there is an embedding $A_{T}(F)\hookrightarrow \prod_{\xi\in T}A_{\xi}(F)$ into the product of local factors $A_{\xi}(F) = A(\{\xi\},F)$ canonical on $F$. Suppose we are given a map $\alpha:[r]\to [r']$ of $\Delta_{s}$, such that the induced $\alpha\pb: S_{r'}^{\text{red}}(X)\to S_{r}^{\text{red}}(X) = (p_{0},...,p_{r'})\mapsto (p_{\alpha(0)},...,p_{\alpha(r)})$ maps $S\subseteq S_{r'}^{\text{red}}(X)$ into $T\subseteq S_{r}^{\text{red}}(X)$. For each $\eta\in S^{\text{red}}_{r'}(X)$, there is a canonical map $\alpha_{\eta}:A_{\alpha\pb(\eta)}(F)\to A_{\eta}(F)$ of local factors \cite[8.3, p. 59]{intro}, \cite[Definition 2.2.3]{huber}. Now, one defines the map $\alpha_{*,F}:\prod_{\xi\in T}A_{\xi}(F)\to\prod_{\eta\in S}A_{\eta}(F)$ as the composition $\prod_{\xi\in T}A_{\xi}(F)\to\prod_{\eta\in S}A_{\alpha\pb(\eta)}(F)\xrightarrow{\prod_{\eta\in S}\alpha_{\eta}}\prod_{\eta\in S}A_{\eta}(F)$, where the first map is induced from canonical projections. By \cite[Proof of Theorem 2.4.1 and Proposition 2.2.4]{huber} applied to a decomposition of $\alpha$ into composition of face maps, we know $\alpha_{*,F}$ induces the map $\alpha_{*,F}:A_{T}(F)\to A_{S}(F)$, and satisfies transitivity $\alpha_{*,F}\circ\beta_{*,F} = (\alpha\circ\beta)_{*,F}$ for $\beta$ satisfying analogous conditions as $\alpha$. In particular, $\prod_{\xi\in S_{\cdot}^{\text{red}}(X)}A_{\xi}(F)$ and $A^{\sbullet}_{\text{red}}(X,F):= A(S^{\text{red}}_{\sbullet}(X),F)$ are well-defined as semi-cosimplicial objects. Moreover, for a quasicoherent sheaf $B$ of (discrete) commutative $\mathscr{O}_{X}$-algebras induced maps between local factors and adeles are maps of algebras, and both $\prod_{\xi\in S_{\cdot}^{\text{red}}(X)}A_{\xi}(B)$ and $A^{\sbullet}_{\text{red}}(X,B)$ are semi-cosimplicial objects in commutative $\mathscr{O}_{X}$-algebras.
\end{remarkn}

We will consider $\infty$-category of modules over sheaves $A_{T}$ of adele rings on $X$. Let $A_{T}$ be the sheaf of adele rings associated with $T\subseteq S_{r}^{\text{red}}(X)$ and $\mathscr{O}_{X}$, and let $\Perf(A_{T})$ be the stable subcategory of $\mathcal{D}(A_{T})\hookrightarrow \Mod(A_{T})$ spanned by perfect complexes over $A_{T}$ \cite[tag 08CM]{stacks}, which we simply call as perfect $A_{T}$-modules. These are objects of $\mathcal{D}(A_{T})$ which are Zariski-locally on $X$ equivalent to objects represented by bounded complexes of direct summands of finite free $A_{T}$-modules. 

\begin{remarkn}\label{rmk:perfglobalsection}
In fact, we can identify this category with the $\infty$-category of perfect modules over the global section ring. Let $A_{T}$ be the sheaf of adele rings as above, and let $\Gamma(A_{T})$ be its ring of global sections. By \cite[Corollary 2.23]{adelic}, the global sections functor induces an equivalence $\Perf(A_{T})\simeq \Perf(\Gamma(A_{T}))$ of (symmetric monoidal) stable $\infty$-categories. In particular, their values $E(A_{T}) := E(\Perf(A_{T}))$ and $E(\Gamma(A_{T})) = E(\Perf(\Gamma(A_{T})))$ for each localizing invariant $E$ are equivalent. 
\end{remarkn}
 
Let $X$ be a Noetherian scheme of finite Krull dimension $n$. Given an increasing sequence $0\leq i_{0}<\cdots <i_{r}\leq n$ of integers, let $\underline{i_{0},...,i_{r}}:=\{(p_{0},...,p_{r})\in S^{\text{red}}_{r}(X)~|~\dim\overline{p_{k}} = i_{k}~\text{for all}~0\leq k\leq r\}$. Note that $S_{r}^{\text{red}}(X) =\coprod_{0\leq i_{0}<\cdots<i_{r}\leq n}\underline{i_{0},...,i_{r}}$. Hence by \cite[Proposition 2.1.5]{huber} the sheaf of reduced adele ring $A^{r}_{\text{red}}(X) := A(S_{r}^{\text{red}}(X),\mathscr{O}_{X})$ decomposes into $A^{r}_{\text{red}}(X)\cong\prod_{0\leq i_{0}<\cdots<i_{r}\leq n}A(i_{0},...,i_{r})$, where $A(i_{0},...,i_{r}):=A(\underline{i_{0},...,i_{r}},\mathscr{O}_{X})$. As each subset $S\subseteq [n]$ defines a unique increasing sequence $0\leq i_{0}<\cdots < i_{r}\leq n$ consisting of its elements, we use the notation $A(S) = A(\underline{i_{0},...,i_{r}},\mathscr{O}_{X})$ for any such $S$\footnote{Hence for example $A(ji) = A(ij) = A(\underline{ij},\mathscr{O}_{X})$ for $j>i$. }.

\begin{example}
Let $X$ be a Noetherian scheme of dimension $1$. Then $A^{\cdot}(X)$ takes the form of $F\times O\rightrightarrows A\times (F\times O)~\substack{\rightarrow\\[-1em] \rightarrow \\[-1em] \rightarrow}\cdots$, and similarly $A^{\cdot}_{\text{red}}(X)$ is of the form $F\times O\rightrightarrows A$ \cite[Proposition. 3.3.3]{huber}. Here $F = A(1)$ is the sheaf of rings of fractions of $X$ whose global section ring is $\prod_{\eta\in X_{1}}\mathscr{O}_{\eta}$ (here the product is taken over generic points of $X$), $O = A(0)$ is the sheaf of integral adele rings of $X$ whose global section ring is $\prod_{p\in X_{0}}\mathscr{O}^{\wedge}_{p}$ (where the product is taken over closed points), and $A = A(01) = F\otimes_{\mathscr{O}}O$ is the sheaf of finite adele rings of $X$. Hence the classical notion of finite adeles for global fields fits into the framework of higher adeles.  
\end{example}

\begin{remarkn}\label{rmk:cubicaladeles}
Let $X$ be a Noetherian scheme of finite Krull dimension $n$. \\
(1) The association $A:=T\mapsto A(T):\mathcal{P}([n])\to\CAlg(\mathscr{O}_{X})^{\heartsuit}$ gives a cubical object. For $\emptyset\not= S = (0\leq i_{0}<\cdots <i_{r}\leq n)\subseteq [n]$, denote $\underline{S} = \underline{i_{0},...,i_{r}}\subseteq S^{\text{red}}_{c(S)}(X)$ and $A(S) = A(\underline{S},\mathscr{O}_{X})$ as above, where $c(S) = c_{n}(S) = [r] = [|S|-1]$. Each $\emptyset\not= S\subseteq T\subseteq [n]$ induces a map $c_{S\subseteq T} = c(\imath_{S\subseteq T}):c(S)\to c(T)$ such that $c_{S\subseteq T}\pb: S^{\text{red}}_{c(T)}(X)\to S^{\text{red}}_{c(S)}(X)$ satisfies $c_{S\subseteq T}\pb(\underline{T})\subseteq\underline{S}$, so from the transitivity of Remark \ref{rmk:functoriality} we know the association $(S\subseteq T)\mapsto (c_{S\subseteq T})_{*,\mathscr{O}_{X}}:A(S)\to A(T)$ defines a functor $\mathcal{P}([n])\backslash\emptyset\to\CAlg(\mathscr{O}_{X})^{\heartsuit}$. By defining $A(\emptyset) = \mathscr{O}_{X}$, we have an extension of the functor to the cube $\mathcal{P}([n])$. \\
(2) Let $\alpha:[r]\to [r']$ be a map in $(\Delta_{s})_{\leq n}$. For each $T\in\mathcal{P}([n])\backslash\emptyset$ of $c(T) = [r']$, let $\alpha\pb T$ be the element of $\mathcal{P}([n])\backslash\emptyset$ with $c(\alpha\pb T) = [r]$ obtained by the Cartesian fibration structure of $c_{n}$. Then the induced map $((c_{n})\psh A)[r]\to((c_{n})\psh A)[r']$ is described as a composition $\prod_{S\in c_{n}^{-1}([r])}A(S)\to\prod_{T\in c_{n}^{-1}([r'])}A(\alpha\pb T)\to \prod_{T\in c_{n}^{-1}([r'])}A(T)$. Here, the first map is induced from the projections $\prod_{S\in c_{n}^{-1}([r])}A(S)\to A(\alpha\pb T)$ for each $T\in c_{n}^{-1}([r'])$, and the second map is the product of the maps $A(\alpha\pb T)\to A(T)$ induced from $\alpha\pb T\subseteq T$ over $T\in c_{n}^{-1}([r'])$. Let $E:\Cat^{\ex}\to\mathcal{T}$ be a localizing invariant. As the functor $E$ on rings commutes with finite products and as $E$ does not distinguish sheaves of adeles and their global section rings, \cite[4.3.3.10]{htt} implies $E((c_{n})\psh A)$ is equivalent to a right Kan extension of $E(A) = E\circ A$ along $c_{n}$. On the other hand, on each local sections rings the induced map $\prod_{S\in c_{n}^{-1}([r])}A(S)\to \prod_{T\in c_{n}^{-1}([r'])}A(T)$ for each $\alpha:[r]\to [r']$ is compatible with the map $\alpha_{*,\mathscr{O}_{X}}:\prod_{\xi\in S^{\text{red}}_{r}}A_{\xi}(\mathscr{O}_{X})\to\prod_{\eta\in S^{\text{red}}_{r'}}A_{\eta}(\mathscr{O}_{X})$ by construction (through embedding into products of local factors). As the semi-cosimplicial object $\prod_{\xi\in S^{\text{red}}_{\cdot}}A_{\xi}(\mathscr{O}_{X})$ induces $A^{\sbullet}_{\text{red}}(X)$ by restriction of each structure maps, we know $(c_{n})\psh A\cong A^{\sbullet}_{\text{red}}(X)$. 
\end{remarkn}

\section{Descent result}
In this section we explain the proof of our main result, Theorem \ref{thm:adelicdescent}. In \ref{subsec:exactseq} we explain the construction of certain exact sequences in $\Cat^{\ex}$ involving categories of perfect modules over sheaves of adeles (Proposition \ref{prop:exactseqadele}). Using these exact sequences and properties of cubical and semisimplicial diagrams explained in previous sections, we derive the descent result in \ref{subsec:adelicdescent}. 

\subsection{Exact sequences of categories of modules over adele rings}
\label{subsec:exactseq}

Let $X$ be a Noetherian scheme of finite Krull dimension $n$. 

\begin{definition}\label{def:perf}
For each $0\leq i\leq n$, denote the stable subcategory of $\Perf(X)$ generated by $C\in\Perf(X)$ with $\Supp(C)\subseteq X\backslash \left(X_{n}\cup\cdots\cup X_{i+1}\right)$ by $\Perf_{\leq i}(X)$. Here, $X_{j} := \{p\in X~|~\dim \overline{p} = j\}$ and $\Perf_{\leq n}(X) = \Perf(X)$. Hence, $\Perf_{\leq i}(X)$ consists of $C\in\Perf(X)$ with $\dim \Supp(C)\leq i$. Similarly, let $0\leq i_{0}<\cdots <i_{r}\leq n$, and denote the stable subcategory of $\Perf(A(i_{0},...,i_{r}))$ generated by the essential image of the exact functor $\Perf_{\leq i}(X)\hookrightarrow \Perf(X)\xrightarrow{A(i_{0},...,i_{r})\otimes_{\mathscr{O}}-}\Perf(A(i_{0},...,i_{r}))$ by $\Perf_{\leq i}(A(i_{0},...,i_{r}))\in\Cat^{\ex}$. 
\end{definition}  

\begin{remarkn}\label{rmk:idem}
Let $A = A(i_{0},...,i_{r})$ as in the definition above. The stable subcategory $\Perf_{\leq n}(A)$ of $\Perf(A)$ may not be closed under retracts, but is closed under finite colimits and contains $A$. Thus, after idempotent completion $\text{Idem}(\Perf_{\leq n}(A))\simeq \Ind(\Perf_{\leq n}(A))^{\omega}\simeq \Mod(A)^{\omega}\simeq \Perf(A)$ in $\Cat^{\perf}$, and localizing invariants do not distinguish between $\Perf_{\leq n}(A)$ and $\Perf(A)$.  \\
\indent On the other hand, by decreasing induction on $i\leq n$ one observes $\Perf_{\leq i}(X)\in\Cat^{\perf}$. For $i=n$ one has $\Perf_{\leq n}(X) = \Perf(X)\in\Cat^{\perf}$. In general one has a fiber sequence $\Perf_{\leq i}(X)\to\Perf_{\leq i+1}(X)\to \Perf(A(i+1))$, since for $C\in\Perf_{\leq i+1}(X)$, one has $A(i+1)\otimes_{\mathscr{O}}C\simeq 0$ iff $C_{q}^{\wedge}\simeq 0$ for all $q\in X_{i+1}$, which is equivalent to the condition $\Supp(C)\subseteq X\backslash X_{i+1}$ due to the faithfully-flatness of $\mathscr{O}_{q}\to\mathscr{O}^{\wedge}_{q}$ under Noetherian assumption \cite[tag 00MC]{stacks}. As $\Supp(C)$ does not contain $X_{n}\cup\cdots\cup X_{i+2}$ already, the condition is equivalent to $C\in\Perf_{\leq i}(X)$. In particular, $\Perf_{\leq i}(X)\in\Cat^{\perf}$ as a fiber of exact functors in $\Cat^{\perf}$, since fiber products of idempotent-complete $\infty$-categories over any $\infty$-category is idempotent complete \cite[Lemma 8-(ii)]{tamme}. 
\end{remarkn}

We will consider exact sequences in $\Cat^{\ex}$ of the form $\mathcal{A}\to\Perf_{\leq i}(A(T))\to\Perf_{\leq i}(A(T\sqcup\{i\}))$, where $T\subseteq[i-1]$. The following proposition treats the case of $T=\emptyset$:

\begin{proposition}\label{prop:fibern}
Let $X$ be a Noetherian scheme of finite Krull dimension $n$, and let $0\leq i\leq n$. Then, we have an exact sequence
\begin{align*}
\Perf_{\leq i -1}(X)\to \Perf_{\leq i}(X)\xrightarrow{A(i)\otimes_{\mathscr{O}}-} \Perf_{\leq i}(A(i)) ~~~~~\text{in $\Cat^{\ex}$. }
\end{align*}
\end{proposition}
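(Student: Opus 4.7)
The plan is to verify the three defining properties of exactness in $\Cat^{\ex}$ recalled in \ref{subsec:stable}: the composition vanishes, the first functor is fully faithful, and the induced functor $\Perf_{\leq i}(X)/\Perf_{\leq i-1}(X)\to\Perf_{\leq i}(A(i))$ on Verdier quotient is an equivalence after idempotent completion. The first property is immediate from Remark \ref{rmk:idem}: for $C\in\Perf_{\leq i-1}(X)$, every $p\in X_{i}$ lies outside $\Supp(C)$ so $C_{p}\simeq 0$, whence the faithful flatness of $\mathscr{O}_{X,p}\to\mathscr{O}_{X,p}^{\wedge}$ gives $A(i)\otimes_{\mathscr{O}}C\simeq 0$. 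The second is automatic from the subcategory inclusion, so the content lies in the third property.

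For the Verdier quotient statement, essential surjectivity after idempotent completion is tautological from the defining generation property of $\Perf_{\leq i}(A(i))$. For full faithfulness, I would combine the mapping space formula for Verdier quotients recalled in \ref{subsec:stable} with the base-change/restriction adjunction to reduce the question to showing that for $C,D\in\Perf_{\leq i}(X)$, the canonical map
\[
\colim_{K\in(\Perf_{\leq i-1}(X))_{/D}}\cof(K\to D)\longrightarrow A(i)\otimes_{\mathscr{O}}D,
\]
formed in an appropriate Ind-completion of $\Perf(X)$, induces an equivalence on $\Map_{\Perf(X)}(C,-)$.

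My approach is to establish this at the local level at each $p\in X_{i}$ and then glue. Bhatt's theorem (Remark \ref{rmk:variousperfect}(1)), combined with $\Gamma(A(i))=\prod_{p\in X_{i}}\mathscr{O}_{X,p}^{\wedge}$ and the identification in Remark \ref{rmk:perfglobalsection}, embeds $\Perf(A(i))$ fully faithfully into $\prod_{p}\Perf(\mathscr{O}_{X,p}^{\wedge})$. Since any $D\in\Perf_{\leq i}(X)$ has only finitely many $p\in X_{i}$ in its support (its support being a Noetherian closed subset of dimension $\leq i$), the image of $D$ under $A(i)\otimes_{\mathscr{O}}-$ sits in the direct sum $\bigoplus_{p\in X_{i}}\Perf_{\{p^{\wedge}\}}(\mathscr{O}_{X,p}^{\wedge})$, which generates $\Perf_{\leq i}(A(i))$ under idempotent completion. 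The crucial local input is the classical equivalence $\Perf_{\{\mathfrak{m}\}}(R)\xrightarrow{\sim}\Perf_{\{\mathfrak{m}^{\wedge}\}}(R^{\wedge})$ induced by $R^{\wedge}\otimes_{R}-$ for any Noetherian local $(R,\mathfrak{m})$, both sides being generated by the residue field; similarly the Verdier quotient on the left should split into local pieces, and this local equivalence then yields the desired global equivalence of $\infty$-categories.

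The main obstacle I anticipate is the global-to-local comparison just sketched. Verifying rigorously that the Verdier quotient mapping space $\Map_{\Perf_{\leq i}(X)/\Perf_{\leq i-1}(X)}(C,D)$ splits as a product over $p\in X_{i}$ of local mapping spaces in $\Perf_{\{p^{\wedge}\}}(\mathscr{O}_{X,p}^{\wedge})$ requires controlling the interaction between the filtered colimit over $K\in(\Perf_{\leq i-1}(X))_{/D}$ and the localization-then-completion operation at each $p$, while exploiting the finiteness of $X_{i}\cap\Supp(D)$ so that the infinite product $A(i)$ reduces effectively to a finite one on the image of each perfect complex.
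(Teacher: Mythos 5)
Your overall skeleton is the right one and matches the paper's in outline: the composition-vanishes and fully-faithfulness steps are exactly as you describe, and the substance is indeed the identification of the idempotent completion of $\Perf_{\leq i}(X)/\Perf_{\leq i-1}(X)$ with $\Perf_{\leq i}(A(i))$. The paper packages this via the split-exactness criterion of Proposition \ref{prop:splitexact}: it exhibits restriction of scalars as a fully faithful right adjoint $\jmath\psh$ of $A(i)\otimes_{\mathscr{O}}-$ landing in $\Ind\Perf_{\leq i}(X)$, using the computation $A(i)\otimes_{\mathscr{O}}C\simeq\prod_{q\in S}(h_{q})\psh C_{q}$ for a finite $S\subseteq X_{i}$, and then identifies $\fib(\jmath\pb)$; this is equivalent to your Verdier-quotient formulation.

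However, the step you flag as ``the main obstacle'' is genuinely the heart of the proof, and your proposal contains no argument for it. Concretely, you need that the canonical map $\colim_{K\in(\Perf_{\leq i-1}(X))_{/D}}\cof(K\to D)\to\prod_{p\in X_{i}\cap\Supp(D)}(h_{p})\psh D_{p}$ is an equivalence in $\Ind\Perf_{\leq i}(X)$, equivalently that $\h\Perf_{\leq i}(X)/\h\Perf_{\leq i-1}(X)\to\bigoplus_{q\in X_{i}}\h\Perf_{\{q\}}(\mathscr{O}_{q})$ exhibits an idempotent completion. This is not formal: it is Balmer's theorem on the filtration of $\Perf(X)$ by dimension of support (\cite[3.24]{balmer}), which the paper invokes at precisely this point and which you would otherwise have to reprove (essentially a Thomason--Trobaugh localization argument plus an excision step separating the finitely many points of $X_{i}\cap\Supp(D)$). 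Two smaller issues: (i) $\Perf_{\{\mathfrak{m}\}}(R)$ is \emph{not} generated by the residue field, which is generally not perfect over $R$; it is generated by Koszul complexes on systems of parameters, and that is also how one justifies the completion invariance $\Perf_{\{\mathfrak{m}\}}(R)\simeq\Perf_{\{\mathfrak{m}\}}(R^{\wedge})$ you invoke (the paper only needs the easier statement $C_{\mathfrak{q}}\simeq C^{\wedge}_{\mathfrak{q}}$ for $C_{\mathfrak{q}}$ supported at the closed point); (ii) you must also verify that the counit for $A(i)\otimes_{\mathscr{O}}-\dashv\jmath\psh$ is an equivalence, i.e.\ $A(i)\otimes_{\mathscr{O}}A(i)\otimes_{\mathscr{O}}C\simeq A(i)\otimes_{\mathscr{O}}C$, which in the paper reduces to $R_{\mathfrak{p}}\otimes_{R}M_{\mathfrak{q}}\simeq 0$ for distinct $\mathfrak{p},\mathfrak{q}\in S$; your appeal to Bhatt's embedding of $\Perf(A(i))$ into $\prod_{p}\Perf(\mathscr{O}_{p}^{\wedge})$ can substitute for this, but the point should be made explicit.
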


Here, we set $\Perf_{\leq -1}(X) = 0$. Before giving a proof of Proposition \ref{prop:fibern}, let us consider the following two lemmas as preparation. Our goal is to describe a functor $\jmath\psh:\Ind\Perf_{\leq i}(A(i))\to\Ind\Perf_{\leq i}(X)$ which is right adjoint to the functor $\jmath\pb = A(i)\otimes_{\mathscr{O}}-$ in $\Prl_{\st}$ through Lemma \ref{lem:rightadj}. We will use Lemma \ref{lem:rightadjprep} for the proof of Lemma \ref{lem:rightadj} as well as the proof of Proposition \ref{prop:fibern} below. 

\begin{lemma}\label{lem:rightadjprep}
Let $X$ be a Noetherian scheme of finite Krull dimension $n$, and let $0\leq i\leq n$. Fix $C\in\Perf_{\leq i}(X)$. For each $q\in X$, let $h_{q}:\Spec\mathscr{O}_{q}\to X$ be the canonical flat morphism, and let $C_{q} = h_{q}\pb C\in\Perf(\mathscr{O}_{q})$. Then the followings hold:\\
(1) $A(i)\otimes_{\mathscr{O}}C$ viewed as an object of $\Mod(\mathscr{O}_{X})$ is equivalent to $\prod_{q\in S}(h_{q})\psh C_{q}$ for some finite subset $S\subseteq X_{i}$ of dimension-$i$ points. \\
(2) $A(i)\otimes_{\mathscr{O}}C$ viewed as an object of $\Mod(\mathscr{O}_{X})$ is in $\Ind\Perf_{\leq i}(X)$.
\begin{proof}
Let $S\subseteq X_{i}$ be the set of dimension-$i$ points with $C_{q}\not\simeq 0$, which is finite due to our assumption, and consider the $\mathscr{O}_{X}$-module object $\prod_{q\in S}(h_{q})\psh C_{q}$. Note that by Noetherian assumption on $X$, the canonical map $h_{q}:\Spec\mathscr{O}_{q}\to X$ is quasicompact quasiseparated, and hence $\prod_{q\in S}(h_{q})\psh C_{q}$ is in $\QCoh(X)$ \cite[tag 08D5]{stacks}. \\
\indent We first consider the affine case $X = \Spec R$ for (1). As before let $S$ be the finite set consisting of points $\mathfrak{q}\in X_{i}$ satisfying $C^{\wedge}_{\mathfrak{q}}\not\simeq 0$ (equivalently $C_{\mathfrak{q}}\not\simeq 0$ due to fully faithfulness of $R_{\mathfrak{q}}\to R^{\wedge}_{\mathfrak{q}}$). Then, the perfect complex $A(i)\otimes_{R}C = \prod_{\mathfrak{q}\in X_{i}}R^{\wedge}_{\mathfrak{q}}\otimes_{R}C$ is equivalent to $\prod_{\mathfrak{q}\in S}C^{\wedge}_{\mathfrak{q}}$ over $A(i)$ via projection $A(i)\to\prod_{\mathfrak{q}\in S}R^{\wedge}_{\mathfrak{q}}$ (as both give equivalent data in $\prod_{\mathfrak{q}\in X_{i}}\Perf(R^{\wedge}_{\mathfrak{q}})$). Now, note that each $C_{\mathfrak{q}} = R_{\mathfrak{q}}\otimes_{R}C\in\Perf(R_{\mathfrak{q}})$ satisfies $(C_{\mathfrak{q}})_{\mathfrak{p}}\simeq C_{\mathfrak{p}}\simeq 0$ for all $\mathfrak{p}\in\Spec R_{\mathfrak{q}}\backslash \{\mathfrak{q}\}$, due to assumption $C\in\Perf_{\leq i}(\Spec R)$. Hence, $C_{\mathfrak{q}}$ is canonically a perfect complex over $R^{\wedge}_{\mathfrak{q}}$, with $C_{\mathfrak{q}}\simeq C^{\wedge}_{\mathfrak{q}}$.\\
\indent Now suppose $X$ is a Noetherian scheme, possibly non-affine. For each affine open $\Spec R\subseteq X$, the restriction of $\prod_{q\in S}(h_{q})\psh C_{q}$ on $\Spec R$ viewed as an $R$-module is $\prod_{q\in S\cap\Spec R}C_{q}$, and we know that there is an equivalence $(A(i)\otimes_{\mathscr{O}}C)|_{\Spec R}\simeq \prod_{q\in S\cap\Spec R}C_{q}$ of $A(i)|_{\Spec R}$-modules obtained from the canonical projection $A(i)|_{\Spec R}\to\prod_{q\in S\cap\Spec R}\mathscr{O}^{\wedge}_{q}$ and $C^{\wedge}_{q}\simeq C_{q}$ over $\mathscr{O}_{q}$, compatible with restrictions. (Here, we are implicitly using the fact that $\Perf(A(i)|_{\Spec R})\simeq\Perf(A(i)(\Spec R))$.) Thus $\prod_{q\in S}(h_{q})\psh C_{q}$ admits an $A(i)$-module structure and is equivalent to $A(i)\otimes_{\mathscr{O}}C$ over $A(i)$ via canonical projections, hence in particular equivalent over $\mathscr{O}_{X}$ when viewed as objects in $\Mod(\mathscr{O}_{X})$.\\
\indent Finally, (2) follows from (1). Note that $(h_{q})\psh C_{q}\simeq ((h_{q})\psh\mathscr{O}_{\Spec\mathscr{O}_{q}})\otimes_{\mathscr{O}}C$ by derived projection formula \cite[tag 0B54]{stacks}, and $(h_{q})\psh\mathscr{O}_{\Spec\mathscr{O}_{q}}\in\QCoh(X)$ due to Noetherian assumption. Thus $(h_{q})\psh C_{q}$ is equivalent to a filtered colimit of the form $\colim_{k}E_{k}\otimes_{\mathscr{O}}C$, where each $E_{k}$ is in $\Perf(X)$. Thus each of $E_{k}\otimes_{\mathscr{O}}C$ is in $\Perf_{\leq i}(X)$, and we know $(h_{q})\psh C_{q}\in\Ind\Perf_{\leq i}(X)$. Since $A(i)\otimes_{\mathscr{O}}C$ is a finite product of such objects by (1), it is also in $\Ind\Perf_{\leq i}(X)$. 
\end{proof}
\end{lemma}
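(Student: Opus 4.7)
The plan is to establish (1) by reducing to the affine situation and exploiting the completion formula for perfect modules, and then to derive (2) from (1) using the derived projection formula together with compact generation of $\QCoh(X)$.

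For (1), I would work over an affine open $\Spec R \subseteq X$ and use that the global sections of $A(i)|_{\Spec R}$ identify with $\prod_{\mathfrak{q}\in X_{i}\cap\Spec R}R_{\mathfrak{q}}^{\wedge}$. The first move is to commute the tensor product past the product: because $C\in\Perf(X)$ is dualizable, the functor $C\otimes_{R}(-)$ preserves arbitrary products, so $A(i)\otimes_{R}C\simeq \prod_{\mathfrak{q}}(R_{\mathfrak{q}}^{\wedge}\otimes_{R}C)$. Applying Remark \ref{rmk:variousperfect}(2) to each factor identifies $R_{\mathfrak{q}}^{\wedge}\otimes_{R}C$ with the derived completion $C_{\mathfrak{q}}^{\wedge}\in\Perf(R_{\mathfrak{q}}^{\wedge})$. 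Next, the support hypothesis $\Supp(C)\subseteq X\backslash(X_{n}\cup\cdots\cup X_{i+1})$ forces $C_{\mathfrak{q}}\in\Perf(R_{\mathfrak{q}})$ to be supported only at the closed point $\mathfrak{q}$ of $\Spec R_{\mathfrak{q}}$, because any other point of $\Spec R_{\mathfrak{q}}$ corresponds to a prime of $R$ properly contained in $\mathfrak{q}$ and hence lying in $X_{>i}$, which is disjoint from $\Supp(C)$. A perfect module supported at the maximal ideal is already complete (e.g., by the derived Nakayama lemma of Remark \ref{rmk:variousperfect}(3) applied to the cofiber of $C_{\mathfrak{q}}\to C_{\mathfrak{q}}^{\wedge}$), so $C_{\mathfrak{q}}^{\wedge}\simeq C_{\mathfrak{q}}$. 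Finally, $\Supp(C)$ is a Noetherian closed subset of $X$ of dimension $\leq i$, so it has only finitely many $i$-dimensional irreducible components; let $S\subseteq X_{i}$ be the finite set of their generic points. All other factors $C_{\mathfrak{q}}$ vanish, leaving $\prod_{\mathfrak{q}\in S}C_{\mathfrak{q}}$, which I identify as an $R$-module with the sections of $\prod_{\mathfrak{q}\in S}(h_{\mathfrak{q}})\psh C_{\mathfrak{q}}$ over $\Spec R$. Gluing over an affine cover of $X$ promotes this to the required equivalence in $\Mod(\mathscr{O}_{X})$.

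For (2), the derived projection formula \cite[tag 0B54]{stacks} gives $(h_{\mathfrak{q}})\psh C_{\mathfrak{q}}\simeq (h_{\mathfrak{q}})\psh(h_{\mathfrak{q}}\pb C)\simeq ((h_{\mathfrak{q}})\psh\mathscr{O}_{\Spec\mathscr{O}_{\mathfrak{q}}})\otimes_{\mathscr{O}}C$. The map $h_{\mathfrak{q}}$ is qcqs under the Noetherian hypothesis, so $(h_{\mathfrak{q}})\psh\mathscr{O}_{\Spec\mathscr{O}_{\mathfrak{q}}}\in\QCoh(X)$; and on a Noetherian scheme of finite Krull dimension one has $\QCoh(X)\simeq\Ind\Perf(X)$, which lets me write this sheaf as a filtered colimit $\colim_{k}E_{k}$ with $E_{k}\in\Perf(X)$. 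Each $E_{k}\otimes_{\mathscr{O}}C$ is perfect with support contained in $\Supp(C)$, hence lies in $\Perf_{\leq i}(X)$, and taking the filtered colimit places $(h_{\mathfrak{q}})\psh C_{\mathfrak{q}}\in\Ind\Perf_{\leq i}(X)$. Since $\Ind\Perf_{\leq i}(X)$ is closed under finite products (both $\Ind$-completion and the stable subcategory $\Perf_{\leq i}(X)$ are), (1) then yields $A(i)\otimes_{\mathscr{O}}C\in\Ind\Perf_{\leq i}(X)$.

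I expect the main obstacle to be bookkeeping in (1): rigorously identifying the product-of-completions description of $A(i)$ at the sheaf level (not just on global sections), commuting $C\otimes(-)$ past the infinite product using dualizability, and then promoting the affine-local identification with $\prod_{\mathfrak{q}\in S}(h_{\mathfrak{q}})\psh C_{\mathfrak{q}}$ into a genuinely sheaf-theoretic equivalence in $\Mod(\mathscr{O}_{X})$ compatibly with restriction to affine opens. Once (1) is in hand, (2) is formal from the projection formula and compact generation.
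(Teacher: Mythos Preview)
Your proposal is correct and follows essentially the same route as the paper's proof: both argue (1) affine-locally via the product description $A(i)=\prod_{\mathfrak{q}\in X_i}R_{\mathfrak{q}}^{\wedge}$, use the support hypothesis to show $C_{\mathfrak{q}}$ is concentrated at the closed point (hence already complete), drop all but finitely many factors, and glue; and both derive (2) from (1) via the projection formula and compact generation of $\QCoh(X)$ by $\Perf(X)$. The only cosmetic difference is that you invoke dualizability of $C$ to commute $-\otimes_R C$ past the infinite product, whereas the paper phrases the same step as comparing data in $\prod_{\mathfrak{q}}\Perf(R_{\mathfrak{q}}^{\wedge})$ (implicitly via Remark~\ref{rmk:variousperfect}(1)).
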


\begin{lemma} \label{lem:rightadj}
Let $X$ be a Noetherian scheme of dimension $n$, and fix any $0\leq i\leq n$. Consider the essentially surjective functor $\jmath\pb = A(i)\otimes_{\mathscr{O}}-:\Perf_{\leq i}(X)\to\Perf_{\leq i}(A(i))$ which induces a compact functor $A(i)\otimes_{\mathscr{O}}-:\Ind\Perf_{\leq i}(X)\to\Ind\Perf_{\leq i}(A(i))$ in $\Prl_{\st}$ still denoted by $\jmath\pb$. Also, consider the restriction of scalars functor $\rho:\Mod(A(i))\to\Mod(\mathscr{O}_{X})$ induced by $\mathscr{O}_{X}\to A(i)$. \\
\indent Then, the restriction $\Ind\Perf_{\leq i}(A(i))\to\Mod(\mathscr{O}_{X})$ of the functor $\rho$ to $\Ind\Perf_{\leq i}(A(i))$ factors through $\Ind\Perf_{\leq i}(X)$, and the resulting functor $\rho':\Ind\Perf_{\leq i}(A(i))\to\Ind\Perf_{\leq i}(X)$ is a right adjoint of $\jmath\pb$. 
\begin{proof}
Since restriction of scalars functor $\rho$ commutes with filtered colimits, it sufficies to check that for each $C\in\Perf_{\leq i}(X)$ the object $A(i)\otimes_{\mathscr{O}}C$ of $\Ind\Perf_{\leq i}(A(i))$, now viewed as an object of $\Mod(\mathscr{O}_{X})$ via $\rho$, in fact sits inside a stable subcategory $\Ind\Perf_{\leq i}(X)$. This follows from Lemma \ref{lem:rightadjprep} (2). From the already-existing adjunction $A(i)\otimes_{\mathscr{O}}-\dashv\rho:\Mod(A(i))\to\Mod(\mathscr{O}_{X})$, one knows $\rho':\Ind\Perf_{\leq i}(A(i))\to\Ind\Perf_{\leq i}(X)$ is right adjoint to $\jmath\pb$. 
\end{proof}
\end{lemma}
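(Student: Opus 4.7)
The plan is to reduce the factorization claim to a statement on compact generators, which is precisely Lemma \ref{lem:rightadjprep}(2), and then to deduce the adjunction by restricting the standard extension-of-scalars/restriction-of-scalars adjunction between module categories. First, I would note that the restriction of scalars $\rho:\Mod(A(i))\to\Mod(\mathscr{O}_{X})$ is a right adjoint to $A(i)\otimes_{\mathscr{O}}-$ at the level of $\Mod$, and moreover $\rho$ preserves all small colimits (in particular filtered colimits), since forgetting module structure along $\mathscr{O}_{X}\to A(i)$ is conservative and colimits of modules are computed on underlying sheaves of spectra.

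Next, I would observe that $\Ind\Perf_{\leq i}(A(i))$ is, by definition and by the standard properties of Ind-completion, the smallest full subcategory of $\Mod(A(i))$ containing the objects $\jmath\pb C = A(i)\otimes_{\mathscr{O}}C$ for $C\in\Perf_{\leq i}(X)$ and closed under filtered colimits, finite limits and colimits, and retracts. Since $\Ind\Perf_{\leq i}(X)\subseteq\Mod(\mathscr{O}_{X})$ is likewise closed under filtered colimits, shifts, fibers, cofibers, and retracts, the factorization $\rho(\Ind\Perf_{\leq i}(A(i)))\subseteq\Ind\Perf_{\leq i}(X)$ will follow provided that $\rho(\jmath\pb C)$ lies in $\Ind\Perf_{\leq i}(X)$ for every generator $C\in\Perf_{\leq i}(X)$. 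But this is exactly the content of Lemma \ref{lem:rightadjprep}(2), which identifies $\rho(A(i)\otimes_{\mathscr{O}}C)$ with the finite product $\prod_{q\in S}(h_{q})\psh C_{q}$ and exhibits it as a filtered colimit of objects in $\Perf_{\leq i}(X)$ via the derived projection formula.

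Having defined $\rho':\Ind\Perf_{\leq i}(A(i))\to\Ind\Perf_{\leq i}(X)$ as the restriction of $\rho$, the adjunction $\jmath\pb\dashv\rho'$ should then follow formally. Since the fully faithful inclusions $\Ind\Perf_{\leq i}(X)\hookrightarrow\Mod(\mathscr{O}_{X})$ and $\Ind\Perf_{\leq i}(A(i))\hookrightarrow\Mod(A(i))$ are compatible with $\jmath\pb$ (it sends one subcategory into the other by essential surjectivity of $\Perf_{\leq i}(X)\to\Perf_{\leq i}(A(i))$ and preservation of filtered colimits) and with $\rho$ (by the factorization just established), the ambient adjunction unit and counit restrict to natural transformations between the appropriate compositions on the subcategories, giving the desired adjunction. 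No step here is a serious obstacle: the only nontrivial input is the explicit computation of $\rho(A(i)\otimes_{\mathscr{O}}C)$, which is already packaged in Lemma \ref{lem:rightadjprep}, so the proof is essentially a book-keeping exercise about restricting adjunctions to stable subcategories closed under the relevant operations.
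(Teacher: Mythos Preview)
Your proposal is correct and follows essentially the same approach as the paper: reduce the factorization to the generators $C\in\Perf_{\leq i}(X)$ using that $\rho$ preserves filtered colimits, invoke Lemma~\ref{lem:rightadjprep}(2) for that case, and then restrict the ambient extension/restriction-of-scalars adjunction to the subcategories. The paper's argument is just a terser version of what you wrote.
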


\begin{proof}[Proof of Proposition \ref{prop:fibern}]
We check that Ind-completion of the sequence of $\Cat^{\ex}$ in question is a split exact sequence of $\Prl_{\st}$ by following the criterion provided by Proposition \ref{prop:splitexact}. Let $\jmath\pb = A(i)\otimes_{\mathscr{O}}-:\Ind\Perf_{\leq i}(X)\to\Ind\Perf_{\leq i}(A(i))$ be an Ind-completion of the functor $A(i)\otimes_{\mathscr{O}}-:\Perf_{\leq i}(X)\to\Perf_{\leq i}(A(i))$, and let $\jmath\psh:\Ind\Perf_{\leq i}(A(i))\to\Ind\Perf_{\leq i}(X)$ be its right adjoint. As $\jmath\pb$ is compact, $\jmath\psh$ commutes with filtered colimits. We verify that $\jmath\psh$ is fully faithful, i.e., the counit map $\jmath\pb\jmath\psh(\jmath\pb C)\to \jmath\pb C$ is an equivalence for all $C\in\Perf_{\leq i}(X)$. By our description of $\jmath\psh$ as $\rho'$ in Lemma \ref{lem:rightadj}, this means we have to check the canonical map $A(i)\otimes_{\mathscr{O}}A(i)\otimes_{\mathscr{O}}C\to A(i)\otimes_{\mathscr{O}}C$ is an equivalence for $C\in\Perf_{\leq i}(X)$. As the statement is Zariski-local on $X$, we may assume that $X = \Spec R$ is affine. By Lemma \ref{lem:rightadjprep} (1), we know $A(i)\otimes_{R}C\simeq\prod_{\mathfrak{q}\in S}C^{\wedge}_{\mathfrak{q}}\simeq\prod_{\mathfrak{q}\in S}C_{\mathfrak{q}}$ as $R$-modules. We have a canonical equivalence $A(i)\otimes_{R}\prod_{\mathfrak{q}\in S}C_{\mathfrak{q}}\simeq A(i)\otimes_{R}C$ obtained as a base change of the equivalence $\prod_{\mathfrak{q}\in S}R_{\mathfrak{q}}\otimes_{R}\prod_{\mathfrak{q}\in S}C_{\mathfrak{q}}\simeq \prod_{\mathfrak{q}\in S}R_{\mathfrak{q}}\otimes_{R}C$. In fact, as products are taken over finite sets, it suffices to check that we have canonical equivalences $R_{\mathfrak{p}}\otimes_{R}\prod_{\mathfrak{q}\in S}C_{\mathfrak{q}}\simeq R_{\mathfrak{p}}\otimes_{R}C$ for each $\mathfrak{p}\in S$. As the involved base changes are flat, we can assume $C\simeq M[0]$ for some discrete finitely generated $R$-module $M$. As $M_{\mathfrak{q}}$ is supported on $\{\mathfrak{q}\}$, each $x\in M_{\mathfrak{q}}$ admits an $r>0$ with $(\mathfrak{q}R_{\mathfrak{q}})^{r}\cdot x=0$. If $\mathfrak{q}\neq\mathfrak{p}\in S$, then one can find $f\in (R\backslash\mathfrak{p})\cap\mathfrak{q}$, and $x = f^{r}x/f^{r}=0\in R_{\mathfrak{p}}\otimes_{R}M_{\mathfrak{q}}$. Thus $R_{\mathfrak{p}}\otimes_{R}M_{\mathfrak{q}}\simeq 0$ for $\mathfrak{p}\neq\mathfrak{q}$, and the claim follows.\\
\indent As the composition of the sequence is zero, it remains to compute the fiber of $\jmath\pb$. To show the fiber is equivalent to $\Ind\Perf_{\leq i-1}(X)$, we use the description of $\h\Perf_{\leq i}(X)/\h\Perf_{\leq i-1}(X)$ given as a consequence of \cite[3.24]{balmer} (see also (7) in the proof of \cite[Theorem 2]{balmer}), that the canonical triangulated functor $\h\Perf_{\leq i}(X)/\h\Perf_{\leq i-1}(X)\to\oplus_{q\in X_{i}}\h\Perf_{\{q\}}(\mathscr{O}_{q})$ exhibits the target as an idempotent completion of triangulated categories. Since the functor is precisely the image of $\Perf_{\leq i}(X)/\Perf_{\leq i-1}(X)\to\oplus_{q\in X_{i}}\Perf_{\{q\}}(\mathscr{O}_{q})$ in $\Cat^{\ex}$ by taking homotopy categories, \cite[5.15]{bgt} implies we have an exact sequence $\Ind\Perf_{\leq i-1}(X)\to\Ind\Perf_{\leq i}(X)\to\Ind(\oplus_{q\in X_{i}}\Perf_{\{q\}}(\mathscr{O}_{q}))$ of $\Prl_{\st}$. In particular, note that for each $C\in\Perf_{\leq i}(X)$, the unit map of the adjunction $C\to\oplus_{q\in X_{i}}(h_{q})\psh C_{q}\simeq \prod_{q\in S}(h_{q})\psh C_{q}$ for the second left adjoint functor agrees with the unit map $C\to\jmath\psh\jmath\pb C$ by Lemma \ref{lem:rightadjprep}. Hence, by description of the unit map in Example \ref{ex:splitexact} associated with the functor $\Ind(\Perf_{\leq i}(X))\to\Ind(\Perf_{\leq i}(X)/\Perf_{\leq i-1}(X))\simeq \Ind(\oplus_{q\in X_{i}}\Perf_{\{q\}}(\mathscr{O}_{q}))$, we know $\jmath\psh\jmath\pb C\simeq \oplus_{q\in X_{i}}(h_{q})\psh C_{q}\simeq \colim_{F\in\Perf_{\leq i-1}(X)_{/C}}\cof(F\to C)$ in $\Ind\Perf_{\leq i}(X)$. Now, consider  the fiber sequence $\colim_{F\in \Perf_{\leq i-1}(X)_{/C}}F\to C\to \jmath\psh\jmath\pb C$ obtained from taking a filtered colimt of the fiber sequences $F\to C\to\cof(F\to C)$ indexed by the filtered $\infty$-category $\Perf_{\leq i-1}(X)_{/C}$. From this, we know that the right adjoint $\imath\ush:\Ind\Perf_{\leq i}(X)\to\fib(\jmath\pb)$ of the inclusion maps compact objects $\Perf_{\leq i}(X)$ to $\Ind\Perf_{\leq i-1}(X)$. By Remark \ref{rmk:splitexact} (2), we know $\fib(\jmath\pb)\simeq\Ind\Perf_{\leq i-1}(X)$. 
\end{proof}

\begin{example}
For $i=n=\dim X$, one in particular has the exact sequence 
\begin{align*}
\Perf_{\leq n-1}(X)\to \Perf(X)\to \Perf(A(n))
\end{align*} 
in $\Cat^{\perf}$ by Proposition \ref{prop:fibern}. Note that fully faithfulness of $\jmath\psh$ in the proof of Proposition \ref{prop:fibern} for this case can also be explained through the second formula of Lemma \ref{lem:A(0)} below. 
\end{example}

We note the following lemma, which is useful in the case of $i=n$ and motivates our approach to the problem:

\begin{lemma}\label{lem:A(0)}
Let $X$ be a Noetherian scheme of finite Krull dimension $n$, and let $0\leq i_{0}<\cdots<i_{r}<n$. Then, the following canonical maps of sheaves of rings 
\begin{align*}
A(n)\otimes_{\mathscr{O}}A(i_{0},...,i_{r})\to A(n,i_{0},...,i_{r})~~~~\text{and}~~~~A(n)\otimes_{\mathscr{O}}A(n)\to A(n)
\end{align*}
are isomorphisms. 
\begin{proof}
As the statement is Zariski-local on $X$, we can assume $X = \Spec R$ is affine. Note that the set of generic points $X^{0}$ of $X$ is finite, and in particular the set of dimension $n$-points $X_{n}\subseteq X^{0}$ is finite. By the characterizing properties of sheaves of adeles, we compute 
\begin{align*}
A(n,i_{0},...,i_{r}) &\cong\prod_{\eta\in X_{n}} A((i_{0},...,i_{r},n)_{\eta},(h_{\eta})\psh h_{\eta}\pb\mathscr{O})\cong \prod_{\eta\in X_{n}}\colim_{\eta\in D(f)}A(i_{0},...,i_{r},\widetilde{R_{f}})\\
& \cong\prod_{\eta\in X_{n}}\colim_{\eta\in D(f)}\colim(A(i_{0},...,i_{r},\mathscr{O}_{\Spec R})\xrightarrow{f\cdot}A(i_{0},...,i_{r},\mathscr{O}_{\Spec R})\xrightarrow{f\cdot}\cdots) \\
& \cong \prod_{\eta\in X_{n}}\colim_{\eta\in D(f)}A(i_{0},...,i_{r},\mathscr{O})_{f}\cong A(n)\otimes_{\mathscr{O}}A(i_{0},...,i_{r}).
\end{align*}
For the second map, it suffices to check that $R_{\mathfrak{p}}\otimes_{R}R_{\mathfrak{q}}\cong 0$ for minimal prime ideals $\mathfrak{p}\neq \mathfrak{q}$. By assumption $\mathfrak{q}R_{\mathfrak{q}}$ is the unique prime ideal of $R_{\mathfrak{q}}$, hence is the nilradical of $R_{\mathfrak{q}}$. We can take $f\in (R\backslash\mathfrak{p})\cap\mathfrak{q}$, and $f^{r}=0$ in $\mathfrak{q}R_{\mathfrak{q}}$ for some $r>0$. Hence $1 = f^{r}/f^{r}=0$ in the localization $R_{\mathfrak{p}}\otimes_{R}R_{\mathfrak{q}}$ of $R_{\mathfrak{q}}$, and we have $R_{\mathfrak{p}}\otimes_{R}R_{\mathfrak{q}}\cong0$.
\end{proof}
\end{lemma}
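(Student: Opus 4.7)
The plan is to reduce to the affine case $X = \Spec R$, which is legitimate since both claimed isomorphisms are local assertions about sheaves of rings. The governing observation is that a dimension-$n$ point in a scheme of Krull dimension $n$ is necessarily a generic point of an $n$-dimensional irreducible component: if $\dim \overline{\eta} = n$ and $\dim R = n$, any strict specialization below $\eta$ would produce a prime chain of length $>n$ in $R$. Consequently $X_{n}$ is a (finite) subset of the minimal primes of the Noetherian ring $R$, and for each $\eta \in X_{n}$ the local ring $R_{\eta}$ is zero-dimensional Noetherian, hence Artinian, so $\mathfrak{m}_{\eta}R_{\eta}$ is nilpotent. These two facts, finiteness of $X_{n}$ and nilpotence of $\mathfrak{m}_{\eta}R_{\eta}$, will drive both isomorphisms.

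For the first isomorphism, I would unfold $A(i_{0},\ldots,i_{r},n)$ using condition (3) of the definition of $A_{T}$, which writes it as $\prod_{q\in X}\lim_{s}A(\underline{i_{0},\ldots,i_{r},n}_{q},(h_{sq})\psh h_{sq}\pb\mathscr{O})$. The local factor at $q$ is zero unless $q\in X_{n}$, so the product collapses to a finite product over $\eta \in X_{n}$. For each such $\eta$, the nilpotence of $\mathfrak{m}_{\eta}R_{\eta}$ makes the $s$-limit stabilize, so the factor becomes $A(\underline{i_{0},\ldots,i_{r},n}_{\eta},(h_{\eta})\psh h_{\eta}\pb\mathscr{O})$. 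Writing $(h_{\eta})\psh h_{\eta}\pb\mathscr{O}$ as the filtered colimit $\colim_{\eta \in D(f)}\widetilde{R_{f}}$, and using condition (1) that $A_{T}$ commutes with filtered colimits together with exactness in the coefficient, one obtains $A(i_{0},\ldots,i_{r},\mathscr{O})_{f}$ inside each colimit, and the total expression becomes $\prod_{\eta\in X_{n}}R_{\eta}\otimes_{R}A(i_{0},\ldots,i_{r}) = A(n)\otimes_{R}A(i_{0},\ldots,i_{r})$, where the tensor can be pulled outside because the product is finite and $A(n)=\prod_{\eta\in X_{n}}R_{\eta}$.

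The second isomorphism then reduces to showing $R_{\mathfrak{p}}\otimes_{R}R_{\mathfrak{q}} = 0$ for distinct minimal primes $\mathfrak{p}\neq\mathfrak{q}$, together with the trivial $R_{\mathfrak{p}}\otimes_{R}R_{\mathfrak{p}}\cong R_{\mathfrak{p}}$. Since $\mathfrak{p}$ and $\mathfrak{q}$ are minimal and distinct, neither contains the other, so I may choose $f\in\mathfrak{q}\setminus\mathfrak{p}$; then $f$ is invertible in $R_{\mathfrak{p}}$ while lying in the unique prime $\mathfrak{q}R_{\mathfrak{q}}$ of the Artinian ring $R_{\mathfrak{q}}$, hence nilpotent there. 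Once $f^{k}=0$ in $R_{\mathfrak{q}}$, the identity $1 = f^{k}/f^{k} = 0$ forces $R_{\mathfrak{p}}\otimes_{R}R_{\mathfrak{q}} = 0$. The product $A(n)\otimes_{R}A(n) = \prod_{\eta,\eta'\in X_{n}}R_{\eta}\otimes_{R}R_{\eta'}$ therefore collapses to its diagonal, yielding $A(n)$.

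The main delicate point I anticipate is coordinating the nested $\lim$ and $\colim$ operations built into the definition of $A_{T}$ with the tensor product on the other side. Finiteness of $X_{n}$ is essential, since it permits commuting a finite product past the tensor over $\mathscr{O}$, and the Artinian nature of $R_{\eta}$ for $\eta \in X_{n}$ is what converts the a priori formal completion-type limit into an ordinary localization, which is where the match with $A(n)\otimes_{\mathscr{O}}-$ actually happens.
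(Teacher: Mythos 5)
Your proposal is correct and follows essentially the same route as the paper's proof: reduce to the affine case, unfold condition (3) of the definition so the product collapses to the finite set $X_{n}$ of generic points (where the $s$-limit stabilizes because the local rings are Artinian), commute $A_{T}$ with the filtered colimit $(h_{\eta})\psh h_{\eta}\pb\mathscr{O}\cong\colim_{\eta\in D(f)}\widetilde{R_{f}}$ to identify each factor with a localization of $A(i_{0},\ldots,i_{r})$, and deduce the second isomorphism from $R_{\mathfrak{p}}\otimes_{R}R_{\mathfrak{q}}\cong 0$ for distinct minimal primes via a nilpotent-yet-invertible element. Your explicit justifications that $X_{n}$ consists of minimal primes and that the completion limit stabilizes are points the paper leaves implicit, but the argument is the same.
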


The following proposition describes remaining exact sequences of the form $\mathcal{A}\to\Perf_{\leq i}(A(T))\to\Perf_{\leq i}(A(T\sqcup\{i\}))$ for $T\not=\emptyset$ (see Definition \ref{def:perf} for notations):

\begin{proposition}\label{prop:fiber2n}
Let $X$ be a Noetherian scheme of finite Krull dimension $n$, and let $0\leq i_{0}<\cdots<i_{r}<i\leq n$. We have an exact sequence
\begin{align*}
\Perf_{\leq i-1}(A(i_{0},...,i_{r}))\to \Perf_{\leq i}(A(i_{0},...,i_{r}))\xrightarrow{A(i,i_{0},...,i_{r})\underset{A(i_{0},...,i_{r})}{\otimes}-} \Perf_{\leq i}(A(i, i_{0},...,i_{r})) ~~~~~\text{in $\Cat^{\ex}$. }
\end{align*}
\begin{proof}
For convenience, let us denote $\underline{j}:=(i_{0},...,i_{r})$. Let $\jmath\pb = A(i,\underline{j})\otimes_{A(\underline{j})}-:\Ind\Perf_{\leq i}(A(\underline{j}))\to\Ind\Perf_{\leq i}(A(i,\underline{j}))$ be an Ind-completion of the functor $A(i,\underline{j})\otimes_{A(\underline{j})}:\Perf_{\leq i}(A(\underline{j}))\to\Perf_{\leq i}(A(i,\underline{j}))$. It is a restriction of the functor $A(i,\underline{j})\otimes_{A(\underline{j})}-:\Mod(A(\underline{j}))\to\Mod(A(i,\underline{j}))$, which we still denote by $\jmath\pb$. We would like to check an Ind-completion of the given sequence in $\Cat^{\ex}$ is a split-exact sequence of $\Prl_{\st}$ by applying Proposition \ref{prop:splitexact}. First, consider the following decomposition property:

\begin{lemma}\label{lem:keydecomposition}
The canonical map $A(\underline{j})\otimes_{\mathscr{O}}A(i)\to A(i,\underline{j})$ of $A(\underline{j})$-algebras induces an equivalence\\ $A(\underline{j})\otimes_{\mathscr{O}}A(i)\otimes_{\mathscr{O}}C\simeq A(i,\underline{j})\otimes_{\mathscr{O}}C$ in $\Mod(A(\underline{j}))$ for all $C\in\Perf_{\leq i}(X)$. 
\end{lemma}

\noindent The proof will be given below. The right adjoint $\jmath\psh:\Mod(A(i,\underline{j}))\to\Mod(A(\underline{j}))$ of the functor $\jmath\pb$ is given as the restriction of scalars functor induced by $A(\underline{j})\to A(i,\underline{j})$, and hence commutes with filtered colimits. By restriction to $\Ind\Perf_{\leq i}(A(i,\underline{j}))$ it induces $\jmath\psh:\Ind\Perf_{\leq i}(A(i,\underline{j}))\to\Ind\Perf_{\leq i}(A(\underline{j}))$, since for $C\in\Perf_{\leq i}(X)$, one has $\jmath\psh(A(i,\underline{j})\otimes_{\mathscr{O}}C)\simeq A(\underline{j})\otimes_{\mathscr{O}}\left(A(i)\otimes_{\mathscr{O}}C\right)$ by Lemma \ref{lem:keydecomposition}, with $A(i)\otimes_{\mathscr{O}}C\in\Ind\Perf_{\leq i}(X)$ by Lemma \ref{lem:rightadj}. Thus, it is still a right adjoint of $\jmath\pb = A(i,\underline{j})\otimes_{A(\underline{j})}-:\Ind\Perf_{\leq i}(A(\underline{j}))\to\Ind\Perf_{\leq i}(A(i,\underline{j}))$. \\
\indent Using this description of the right adjoint $\jmath\psh:\Ind\Perf_{\leq i}(A(i,\underline{j}))\to\Ind\Perf_{\leq i}(A(\underline{j}))$, we check that this functor $\jmath\psh$ is fully faithful, i.e., the counit map for the associated adjunction is an equivalence. It suffices to verify the canonical equivalence $A(i,\underline{j})\otimes_{A(\underline{j})}A(i,\underline{j})\otimes_{\mathscr{O}}C\simeq A(i,\underline{j})\otimes_{\mathscr{O}}C$ in $\Mod(A(i,\underline{j}))$ for $C\in\Perf_{\leq i}(X)$. From Proposition \ref{prop:fibern}, we know $A(i)\otimes_{\mathscr{O}}A(i)\otimes_{\mathscr{O}}C\overset{\sim}{\to} A(i)\otimes_{\mathscr{O}}C$. Base change to $A(i,\underline{j})$ over $A(i)$ gives the canonical equivalence $A(i,\underline{j})\otimes_{\mathscr{O}}A(i)\otimes_{\mathscr{O}}C\overset{\sim}{\to} A(i,\underline{j})\otimes_{\mathscr{O}}C$. As the source is equivalent to $A(i,\underline{j})\otimes_{A(\underline{j})}A(\underline{j})\otimes_{\mathscr{O}}A(i)\otimes_{\mathscr{O}}C$, again Lemma \ref{lem:keydecomposition} gives a desired equivalence. Before computing the fiber of $\jmath\pb$, let us give a proof of the Lemma:

\begin{proof}[Proof of lemma \ref{lem:keydecomposition}]
For $i = n$, we have an isomorphism $A(\underline{j})\otimes_{\mathscr{O}}A(n)\simeq A(n,\underline{j})$ by Lemma \ref{lem:A(0)}, hence by tensoring with $C$ the result follows. Now we give a proof which works for the general case. As the statement is Zariski-local on $X$, we can further assume $X = \Spec R$ is affine. We have to check $A(\underline{j})\otimes_{R}A(i)\otimes_{R}C\simeq A(i,\underline{j})\otimes_{A(i)}A(i)\otimes_{R}C$. By Lemma \ref{lem:rightadjprep} (1), $A(i)\otimes_{R}C\simeq \prod_{\mathfrak{q}\in S}C^{\wedge}_{\mathfrak{q}}$ for some finite set $S\subseteq X_{i}$, and each $C^{\wedge}_{\mathfrak{q}}$ is in $\Perf_{\{\mathfrak{q}\}}(R^{\wedge}_{\mathfrak{q}})$. In particular, $C_{\mathfrak{q}}$ is canonically a perfect module over $R^{\wedge}_{\mathfrak{q}}$, and $C_{\mathfrak{q}}\simeq C^{\wedge}_{\mathfrak{q}}$. Thus, we have to prove $A(\underline{j})\otimes_{R}\prod_{\mathfrak{q}\in S}C_{\mathfrak{q}}\simeq A(i,\underline{j})\otimes_{A(i)}\prod_{\mathfrak{q}}C_{\mathfrak{q}}$, and since the product is over a finite set, we are reduced to proving that $A(\underline{j})\otimes_{R}C_{\mathfrak{q}}\simeq A(i,\underline{j})\otimes_{A(i)}C_{\mathfrak{q}}$, i.e.,
\begin{align*}
\left(A(i)\otimes_{R}R_{\mathfrak{q}}\right)\otimes_{R_{\mathfrak{q}}}C_{\mathfrak{q}}\simeq \left(A(i,\underline{j})\otimes_{A(i)}R^{\wedge}_{\mathfrak{q}}\right)\otimes_{R^{\wedge}_{\mathfrak{q}}}C_{\mathfrak{q}}~~~~~~\text{over $A(\underline{j})$,}
\end{align*}
for $\mathfrak{q}\in X_{i}$ and $C_{\mathfrak{q}}\simeq C^{\wedge}_{\mathfrak{q}}\in\Perf_{\{\mathfrak{q}\}}(R^{\wedge}_{\mathfrak{q}})$. As $A(i,\underline{j})\cong\prod_{q'\in X_{i}}\lim_{s}A_{s\overline{q'}}(i,\underline{j})$ by construction \cite[p. 65]{intro}, the perfect module $A(i,\underline{j})\otimes_{A(i)}C_{\mathfrak{q}}\in\Perf(A(i,\underline{j}))$ is equivalent to $\lim_{s}A_{s\overline{\mathfrak{q}}}(i,\underline{j})\otimes_{R^{\wedge}_{\mathfrak{q}}}C_{\mathfrak{q}}$, and we have to prove that 
\begin{align}
\left(A(\underline{j})\otimes_{R}R_{\mathfrak{q}}\right)\otimes_{R_{\mathfrak{q}}}C_{\mathfrak{q}}\simeq (\lim_{s}A_{s\overline{\mathfrak{q}}}(i,\underline{j}))\otimes_{R_{\mathfrak{q}}^{\wedge}}C_{\mathfrak{q}}~~~~~~\text{over $A(\underline{j})$.} \label{formula:local}
\end{align}
\begin{lemma}
$(A(\underline{j})\otimes_{R}R_{\mathfrak{q}})^{\wedge}_{\mathfrak{q}}\isomto\lim_{s}A_{s\overline{\mathfrak{q}}}(i,\underline{j})$, where the completion is taken at the ideal $\mathfrak{q}(A(\underline{j})\otimes_{R}R_{\mathfrak{q}})$. 
\begin{proof}
For each $s$, one has $(A(\underline{j})\otimes_{R}R_{\mathfrak{q}})/\mathfrak{q}^{s}(A(\underline{j})\otimes_{R}R_{\mathfrak{q}})\cong (A(\underline{j})/\mathfrak{q}^{s}A(\underline{j}))\otimes_{R}R_{\mathfrak{q}}\cong (A(\underline{j})\otimes_{R}R/\mathfrak{q}^{s})\otimes_{R}R_{\mathfrak{q}}$. Now, observe that by viewing $R/\mathfrak{q}^{s}$ as a coherent $R$-module, one has $A(\underline{j})\otimes_{R}R/\mathfrak{q}^{s}\cong A(\underline{j},R/\mathfrak{q}^{s}) = A(\underline{j},\imath\psh\mathscr{O}_{s\overline{\mathfrak{q}}})\cong A_{s\overline{\mathfrak{q}}}(\underline{j})$. For the last isomorphism, note that $\underline{j} = \left(\underline{j}\cap S^{\text{red}}_{r-1}(V(\mathfrak{q}^{s}))\right)\coprod \{(p_{0},...,p_{r})\in \underline{j}~|~p_{r}\in\Spec R\backslash V(\mathfrak{q}^{s})\}$, so $A(\underline{j},\imath\psh\mathscr{O}_{s\overline{\mathfrak{q}}})\cong A\left(\underline{j}\cap S^{\text{red}}_{r-1}(V(\mathfrak{q}^{s})),\imath\psh\mathscr{O}_{s\overline{\mathfrak{q}}}\right)\times A\left(\{(p_{0},...,p_{r})\in \underline{j}~|~p_{r}\in\Spec R\backslash V(\mathfrak{q}^{s})\},\imath\psh\mathscr{O}_{s\overline{\mathfrak{q}}}\right)\cong A_{\Spec R/\mathfrak{q}^{s}}(\underline{j})\times 0$, using \cite[Proposition 2.1.5]{huber}. Hence, we can continue the chain of canonical isomorphisms as 
\begin{align*}
(A(\underline{j})\otimes_{R}R_{\mathfrak{q}})/\mathfrak{q}^{s}(A(\underline{j})\otimes_{R}R_{\mathfrak{q}}) & \cong A_{s\overline{\mathfrak{q}}}(\underline{j})\otimes_{R/\mathfrak{q}^{s}}R/\mathfrak{q}^{s}\otimes_{R}R_{\mathfrak{q}}\\
&\cong  A_{s\overline{\mathfrak{q}}}(\underline{j})\otimes_{R/\mathfrak{q}^{s}}R_{\mathfrak{q}}/\mathfrak{q}^{s}R_{\mathfrak{q}} \cong A_{s\overline{\mathfrak{q}}}(\underline{j})\otimes_{R/\mathfrak{q}^{s}}\Frac(R/\mathfrak{q}^{s})\cong A_{s\overline{\mathfrak{q}}}(i,\underline{j}).
\end{align*}  
These isomorphisms (for each $s$) are compatible with each other, and hence induce an isomorphism between limits. 
\end{proof}
\end{lemma}

Hence combined with the lemma below (applied to $A = A(\underline{j})\otimes_{R}R_{\mathfrak{q}}$), we have the canonical equivalence (\ref{formula:local}), finishing the proof. Note that $A(\underline{j})$ is flat over $R$ \cite[Lemma 1.10]{adelic}.

\begin{lemma}
Let $R = (R,\mathfrak{m},\kappa)$ be a Noetherian local ring, $R^{\wedge}$ be its completion at $\mathfrak{m}$, and $A$ be a flat $R$-algebra. Also, let $A^{\wedge} = \lim_{s}A/\mathfrak{m}^{s}A$, which is canonically an algebra over $A$ and $R^{\wedge}$. Then the canonical map of exact functors $A\otimes_{R}(-)\to A^{\wedge}\otimes_{R^{\wedge}}(-)(\simeq A^{\wedge}\otimes_{R}-)$ from $\Perf_{\{\mathfrak{m}\}}(R)\simeq\Perf_{\{\mathfrak{m}\}}(R^{\wedge})$ to $\Mod(A)$ is an equivalence. 
\begin{proof}
By induction on the number of nonzero homotopy modules, it suffices to check the equivalence $A\otimes_{R}M\simeq A^{\wedge}\otimes_{R}M$ for discrete finitely generated $R$-modules $M$ supported on the point $\{\mathfrak{m}\}$. More precisely, for each $C\in\Perf_{\{\mathfrak{m}\}}(R)$ one can apply exact functors on a truncation fiber sequence of the form $\pi_{k}(C)\to C\to\tau_{<k}C$, with $\tau_{<k}C$ having strictly less number of nonvanishing homotopy modules. By assumption on $M$, there is an $r>0$ with $\mathfrak{m}^{r}M\simeq0$. Thus, by applying exact functors on fiber sequences $\mathfrak{m}^{i}M\to M\to M/\mathfrak{m}^{i}M$ ($0\leq i\leq r$), one knows it suffices to verify the equivalence for $R$-modules $\mathfrak{m}^{i-1}M/\mathfrak{m}^{i}M$, or more generally for finite $R/\mathfrak{m}$-modules viewed as $R$-modules. Hence, it suffices to verify the equivalence $A\otimes_{R}\kappa\simeq A^{\wedge}\otimes_{R}\kappa$. Note that by \cite[tag 0AGW]{stacks} or \cite[Theorem 0.1]{yek}, $A^{\wedge}$ is still flat over $R$, and the involved base changes are underived. Thus, both sides are canonically equivalent to $A/\mathfrak{m}A$. 
\end{proof}
\end{lemma}
This finishes the proof of Lemma \ref{lem:keydecomposition}. 
\end{proof}  
It remains to describe the fiber of $\jmath\pb$. Let $A(\underline{j})\otimes_{\mathscr{O}}C\in\Perf_{\leq i}(A(\underline{j}))$, where $C\in\Perf_{\leq i}(X)$. By Lemma \ref{lem:keydecomposition}, its unit map $A(\underline{j})\otimes_{\mathscr{O}}C\to\jmath\psh\jmath\pb(A(\underline{j})\otimes_{\mathscr{O}}C)\simeq A(i,\underline{j})\otimes_{\mathscr{O}}C$ is equivalent to the image of the unit map $C\to A(i)\otimes_{\mathscr{O}}C$ of Proposition \ref{prop:fibern} by $A(\underline{j})\otimes_{\mathscr{O}}-$. Again by Proposition \ref{prop:fibern}, Remark \ref{rmk:splitexact} (1), and exactness of $A(\underline{j})\otimes_{\mathscr{O}}-$, we have a fiber sequence $A(\underline{j})\otimes_{\mathscr{O}}F\to A(\underline{j})\otimes_{\mathscr{O}}C\to \jmath\psh\jmath\pb(A(\underline{j})\otimes_{\mathscr{O}}C)$ in $\Ind\Perf_{\leq i}(A(\underline{j}))$, where $F\in\Ind\Perf_{\leq i-1}(X)$. Thus, a right adjoint $\imath\ush:\Ind\Perf_{\leq i}(A(\underline{j}))\to\fib(\jmath\pb)$ maps $\Perf_{\leq i}(A(\underline{j}))$ to $\Ind\Perf_{\leq i-1}(A(\underline{j}))$, and by Remark \ref{rmk:splitexact} (2), we know $\fib(\jmath\pb)\simeq \Ind\Perf_{\leq i-1}(A(\underline{j}))$.
\end{proof}
\end{proposition}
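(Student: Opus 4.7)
The plan is to mimic the strategy of Proposition \ref{prop:fibern}: after applying $\Ind$, verify that the resulting sequence in $\Prl_{\st}$ meets the hypotheses of Proposition \ref{prop:splitexact}. Writing $\underline{j} = (i_{0},\ldots,i_{r})$ and $\jmath\pb = A(i,\underline{j})\otimes_{A(\underline{j})}-$, a right adjoint $\jmath\psh$ at the level of $\Mod(A(\underline{j}))$ versus $\Mod(A(i,\underline{j}))$ is provided by restriction of scalars along $A(\underline{j})\to A(i,\underline{j})$; it preserves filtered colimits, so $\jmath\pb$ is compact. What I need to check is that $\jmath\psh$ restricts to a functor $\Ind\Perf_{\leq i}(A(i,\underline{j}))\to \Ind\Perf_{\leq i}(A(\underline{j}))$, that this restriction is fully faithful, and that the fiber of $\jmath\pb$ is equivalent to $\Ind\Perf_{\leq i-1}(A(\underline{j}))$.

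All three points will be reduced to a single decomposition lemma: for every $C\in\Perf_{\leq i}(X)$, the canonical map
$$A(\underline{j})\otimes_{\mathscr{O}}A(i)\otimes_{\mathscr{O}}C\to A(i,\underline{j})\otimes_{\mathscr{O}}C$$
is an equivalence in $\Mod(A(\underline{j}))$. Granting this: the restriction of $\jmath\psh$ to the $\Ind\Perf_{\leq i}$ subcategories follows from Lemma \ref{lem:rightadjprep}(2), since $A(i)\otimes_{\mathscr{O}}C\in\Ind\Perf_{\leq i}(X)$; the counit $\jmath\pb\jmath\psh\to\mathrm{id}$ on an object of the form $A(i,\underline{j})\otimes_{\mathscr{O}}C$ becomes an equivalence by base changing the identity $A(i)\otimes_{\mathscr{O}}A(i)\otimes_{\mathscr{O}}C\simeq A(i)\otimes_{\mathscr{O}}C$ from Proposition \ref{prop:fibern} along $A(i)\to A(i,\underline{j})$ and then invoking the lemma; and the unit $A(\underline{j})\otimes_{\mathscr{O}}C\to \jmath\psh\jmath\pb(A(\underline{j})\otimes_{\mathscr{O}}C)$ is identified via the lemma with $A(\underline{j})\otimes_{\mathscr{O}}-$ applied to the unit $C\to A(i)\otimes_{\mathscr{O}}C$ from Proposition \ref{prop:fibern}, whose fiber already lies in $\Ind\Perf_{\leq i-1}(X)$. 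Exactness of $A(\underline{j})\otimes_{\mathscr{O}}-$ together with Remark \ref{rmk:splitexact}(2) then yields $\fib(\jmath\pb)\simeq \Ind\Perf_{\leq i-1}(A(\underline{j}))$.

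The main obstacle is the decomposition lemma itself. Working Zariski-locally I reduce to $X=\Spec R$, and by Lemma \ref{lem:rightadjprep}(1) rewrite $A(i)\otimes_{R}C\simeq \prod_{\mathfrak{q}\in S}C_{\mathfrak{q}}$ for a finite set $S\subseteq X_{i}$, with each $C_{\mathfrak{q}}\simeq C^{\wedge}_{\mathfrak{q}}\in\Perf_{\{\mathfrak{q}\}}(R^{\wedge}_{\mathfrak{q}})$. Since the product over $S$ is finite and $A(i,\underline{j})\cong\prod_{\mathfrak{q}'\in X_{i}}\lim_{s}A_{s\overline{\mathfrak{q}'}}(i,\underline{j})$ by construction, the claim reduces to a pointwise statement of the form
$$(A(\underline{j})\otimes_{R}R_{\mathfrak{q}})\otimes_{R_{\mathfrak{q}}}C_{\mathfrak{q}}\simeq \bigl(\lim_{s}A_{s\overline{\mathfrak{q}}}(i,\underline{j})\bigr)\otimes_{R^{\wedge}_{\mathfrak{q}}}C_{\mathfrak{q}}$$
for each $\mathfrak{q}\in S$. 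The crux will be identifying $\lim_{s}A_{s\overline{\mathfrak{q}}}(i,\underline{j})$ with the $\mathfrak{q}$-adic completion $(A(\underline{j})\otimes_{R}R_{\mathfrak{q}})^{\wedge}_{\mathfrak{q}}$, which I expect to verify by computing the finite quotients $(A(\underline{j})\otimes_{R}R_{\mathfrak{q}})/\mathfrak{q}^{s}$ directly: splitting $\underline{j}$ according to whether the last coordinate lies in $V(\mathfrak{q}^{s})$ or its complement via \cite[Proposition 2.1.5]{huber} isolates the relevant factor, and flatness of $A(\underline{j})$ over $R$ keeps the computation underived. To close the loop and pass from the completion to the actual base change $A(\underline{j})\otimes_{R}R_{\mathfrak{q}}$ on modules supported at $\{\mathfrak{q}\}$, I would run a truncation-and-$\mathfrak{q}$-adic-filtration induction to reduce to finite $\kappa(\mathfrak{q})$-modules, where both sides are manifestly $A(\underline{j})/\mathfrak{q}A(\underline{j})$.
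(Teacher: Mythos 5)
Your proposal follows essentially the same route as the paper's proof: the same key decomposition lemma $A(\underline{j})\otimes_{\mathscr{O}}A(i)\otimes_{\mathscr{O}}C\simeq A(i,\underline{j})\otimes_{\mathscr{O}}C$, the same reduction via Lemma \ref{lem:rightadjprep}(1) to a pointwise comparison with $\lim_{s}A_{s\overline{\mathfrak{q}}}(i,\underline{j})$ identified as a $\mathfrak{q}$-adic completion, and the same d\'evissage to finite $\kappa(\mathfrak{q})$-modules using flatness. The argument is correct and matches the paper's proof in all essential steps.
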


\begin{example}
For $i = n = \dim X$ and for $0\leq i_{1}\leq\cdots\leq i_{r}<n$, we in particular have an exact sequence
\begin{align*}
\Perf_{\leq n-1}(A(i_{0},...,i_{r}))\to \Perf_{\leq n}(A(i_{0},...,i_{r}))\xrightarrow{A(n,i_{0},...,i_{r})\underset{A(i_{0},...,i_{r})}{\otimes}-} \Perf_{\leq n}(A(n, i_{0},...,i_{r})) ~~~~~\text{in $\Cat^{\ex}$. }
\end{align*}
Note that fully faithfulness of a right adjoint of (an Ind-completion of) the functor $A(n,i_{0},...,i_{r})\underset{A(i_{0},...,i_{r})}{\otimes}-$ can be explained by the second isomorphism in Lemma \ref{lem:A(0)}. By Remark \ref{rmk:idem}, applying any localizing invariant $E:\Cat^{\ex}\to\mathcal{T}$ to above exact sequence yields the fiber sequence 
\begin{align*}
E(\Perf_{\leq n-1}(A(i_{0},...,i_{r})))\to E(A(i_{0},...,i_{r}))\to E(A(n,i_{0},...,i_{r}))
\end{align*} 
in a stable $\infty$-category $\mathcal{T}$.
\end{example}

\begin{remarkn}
In fact, we can verify that the fiber of the functor $\jmath\pb = A(i,\underline{j})\otimes_{A(\underline{j})}-$ in Proposition \ref{prop:fiber2n} on compact objects $\Perf_{\leq i}(A(\underline{j}))$ is $\Perf_{\leq i-1}(A(\underline{j}))$ via direct computation. We have a canonical morphism (i.e., a square) from $\Perf_{\leq i}(X)\to\Perf_{\leq i}(A(i))$ of Proposition \ref{prop:fibern} to $\Perf_{\leq i}(A(\underline{j}))\to \Perf_{\leq i}(A(i, \underline{j}))$, whose component functors are essentially surjective. Thus, it sufficies to prove the following:
\begin{proposition}\label{prop:kerneldirectcomp}
Let $X$ be a Noetherian scheme of finite Krull dimension $n$, and take $i$ and $\underline{j}$ as in Proposition \ref{prop:fiber2n}. For $C\in\Perf_{\leq i}(X)$, the vanishing $A(i,\underline{j})\otimes_{\mathscr{O}}C\simeq 0$ implies $A(i)\otimes_{\mathscr{O}}C\simeq 0$. 
\end{proposition}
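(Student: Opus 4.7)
The plan is to work Zariski-locally on $X$, decompose $A(i)\otimes_{\mathscr{O}}C$ into a finite product indexed by its support on $X_{i}$, and apply derived Nakayama to each local factor.

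Since the statement is Zariski-local on $X$, I may assume $X=\Spec R$ is affine. Writing $\underline{j}=(i_{0},\ldots,i_{r})$, Lemma \ref{lem:keydecomposition} rewrites the hypothesis as $A(\underline{j})\otimes_{R}A(i)\otimes_{R}C\simeq 0$, and Lemma \ref{lem:rightadjprep}(1) gives $M:=A(i)\otimes_{R}C\simeq\prod_{\mathfrak{q}\in S}C^{\wedge}_{\mathfrak{q}}$ for some finite $S\subseteq X_{i}$, with each factor $C^{\wedge}_{\mathfrak{q}}\in\Perf_{\{\mathfrak{q}\}}(R^{\wedge}_{\mathfrak{q}})$. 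The desired conclusion $M\simeq 0$ amounts to $C^{\wedge}_{\mathfrak{q}}\simeq 0$ for every $\mathfrak{q}\in S$, and finiteness of $S$ extracts from the hypothesis the local vanishing $A(\underline{j})\otimes_{R}C^{\wedge}_{\mathfrak{q}}\simeq 0$ for each such $\mathfrak{q}$.

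Fixing $\mathfrak{q}\in S$, I would set $B_{\mathfrak{q}}:=\lim_{s}A_{s\overline{\mathfrak{q}}}(i,\underline{j})$. The identification $B_{\mathfrak{q}}\cong (A(\underline{j})\otimes_{R}R_{\mathfrak{q}})^{\wedge}_{\mathfrak{q}}$ carried out within the proof of Proposition \ref{prop:fiber2n}, together with the $\mathfrak{q}R^{\wedge}_{\mathfrak{q}}$-completeness of the perfect $R^{\wedge}_{\mathfrak{q}}$-module $C^{\wedge}_{\mathfrak{q}}$, translates the local vanishing into $B_{\mathfrak{q}}\otimes_{R^{\wedge}_{\mathfrak{q}}}C^{\wedge}_{\mathfrak{q}}\simeq 0$. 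Derived Nakayama (Remark \ref{rmk:variousperfect}(3)) then reduces $C^{\wedge}_{\mathfrak{q}}\simeq 0$ to verifying $C^{\wedge}_{\mathfrak{q}}\otimes_{R^{\wedge}_{\mathfrak{q}}}\kappa(\mathfrak{q})\simeq 0$, where $\kappa(\mathfrak{q})=R^{\wedge}_{\mathfrak{q}}/\mathfrak{q}R^{\wedge}_{\mathfrak{q}}$. Base-changing the previous vanishing along $R^{\wedge}_{\mathfrak{q}}\to\kappa(\mathfrak{q})$ and invoking the identification $B_{\mathfrak{q}}/\mathfrak{q}B_{\mathfrak{q}}\cong A_{\overline{\mathfrak{q}}}(i,\underline{j})$ (the $s=1$ case of the same computation) yields
\begin{align*}
A_{\overline{\mathfrak{q}}}(i,\underline{j})\otimes_{\kappa(\mathfrak{q})}\bigl(C^{\wedge}_{\mathfrak{q}}\otimes_{R^{\wedge}_{\mathfrak{q}}}\kappa(\mathfrak{q})\bigr)\simeq 0.
\end{align*}
Once $A_{\overline{\mathfrak{q}}}(i,\underline{j})$ is known to be a nonzero $\kappa(\mathfrak{q})$-vector space, the vanishing of a tensor product of vector spaces over a field forces the other factor to be zero, completing the argument.

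The main obstacle is therefore the nonvanishing of $A_{\overline{\mathfrak{q}}}(i,\underline{j})$ for each $\mathfrak{q}\in S$. Unwinding the definition, this amounts to producing a chain of specializations $\mathfrak{q}\subsetneq\mathfrak{p}_{r}\subsetneq\cdots\subsetneq\mathfrak{p}_{0}$ in $R$ with $\dim\overline{\mathfrak{p}_{k}}=i_{k}$, i.e., such a chain lying inside the closed subscheme $\overline{\{\mathfrak{q}\}}$, which has dimension $i$. This is automatic when $X$ is universally catenary. Equivalently, the nonvanishing is precisely the condition that the flat extension $R^{\wedge}_{\mathfrak{q}}\to B_{\mathfrak{q}}$ (flat because $A(\underline{j})$ is flat over $R$) is faithfully flat, in which case the conclusion follows immediately without invoking derived Nakayama; I would expect the proof to be completed either by a direct dimension-filtration argument inside $\overline{\{\mathfrak{q}\}}$ or by the corresponding faithful flatness verification.
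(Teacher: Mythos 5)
Your reduction to local factors is sound and runs parallel to the paper's argument up to the base change to the residue field: the hypothesis does localize to $B_{\mathfrak{q}}\otimes_{R^{\wedge}_{\mathfrak{q}}}C^{\wedge}_{\mathfrak{q}}\simeq 0$ for each $\mathfrak{q}\in S$, derived Nakayama is the right tool, and the vector-space argument over $\kappa(\mathfrak{q})$ would indeed finish the proof \emph{provided} $A_{\overline{\mathfrak{q}}}(i,\underline{j})\neq 0$. But that nonvanishing is exactly where the content of the proposition lives, and you leave it unproved: you call it ``the main obstacle,'' remark that it would be automatic if $X$ were universally catenary, and then only gesture at two possible completions without carrying either out. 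As written this is a genuine gap, and the aside about catenary hypotheses suggests you have not checked that the required chains exist in the stated generality. They do, unconditionally: if $q_{0}<q_{1}<\cdots<q_{i}=\mathfrak{q}$ is a chain of maximal length $i=\dim\overline{\mathfrak{q}}$ inside $\overline{\mathfrak{q}}$, then $\dim\overline{q_{k}}\geq k$ because of the subchain below $q_{k}$, while $\dim\overline{q_{k}}>k$ would let one splice a chain of length $>i$ into $\overline{\mathfrak{q}}$; hence $\dim\overline{q_{k}}=k$ exactly, and extracting the terms in degrees $i_{0},\ldots,i_{r},i$ shows $\underline{i_{0},\ldots,i_{r},i}\neq\emptyset$ in $S^{\text{red}}_{r+1}(\overline{\mathfrak{q}})$. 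Even then you would still need to argue that a nonempty index set forces $A_{\overline{\mathfrak{q}}}(i,\underline{j})\neq 0$, e.g., via the unital projection onto a single local factor $A_{\xi}$, each of which is a nonzero iterated localization--completion.

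The paper never states this nonvanishing explicitly. It instead proves Lemma \ref{lem:rationalization2} (the top-dimensional case $i=n$) by induction on $\dim X$: at each step derived Nakayama kills $C^{\wedge}_{\mathfrak{q}}$ at every dimension-$(n-1)$ point, and closedness of $\Supp(C)$ then rules out any surviving dimension-$n$ point; the general case follows by applying that lemma to $\Spec R/\mathfrak{q}$, precisely at the spot where you want your nonvanishing claim. The two routes ultimately rest on the same geometric fact about dimensions along maximal specialization chains --- the paper packages it as the closed-support step inside its induction, whereas you would isolate it as an explicit nonvanishing of an adele ring over $\kappa(\mathfrak{q})$. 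Your version would be a clean alternative once the two missing verifications above are supplied, but without them the proof is incomplete at its crux.
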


\begin{lemma}\label{lem:rationalization2}
Proposition \ref{prop:kerneldirectcomp} holds for $i = n$. In other words, $A(n,\underline{j})\otimes_{\mathscr{O}}C\simeq 0$ implies $A(n)\otimes_{\mathscr{O}}C\simeq 0$ for $C\in\Perf(X)$. 
\begin{proof}
The question is Zariski-local on $X$, so we can assume $X = \Spec R$. We proceed by induction on $\dim X$. The case of $\dim X = 0$ is tautological, as the only possible choice of the sheaf is $A(0)$. Suppose $\dim X>0$, and let $\underline{j} = (i_{0},...,i_{r})$. By Lemma \ref{lem:A(0)}, the assumption equivalently says $A(n)\otimes_{R}\left(A(\underline{j})\otimes_{R}C\right)\simeq 0$, so $A(\underline{j})\otimes_{R}C$ vanishes at each points of $X_{n}$. Fix any $\eta\in X_{n}$. We can take an affine open subset where $A(\underline{j})\otimes_{R}C$ vanishes, since it is concentrated in finitely many degrees and $A(\underline{j})$ is flat over $R$. Thus we can assume our $C\in\Perf(R)$ satisfies $A(\underline{j})\otimes_{R}C\simeq 0$ (where $i_{r}<n$). By base change to $A(n-1,\underline{j})$ over $A(\underline{j})$ (if $i_{r}<n-1$), we can further assume $i_{r}=n-1$. We are reduced to checking that this condition, together with the induction hypothesis, imply $C_{\eta}\simeq 0$. \\
\indent From $A(\underline{j})\simeq \prod_{\mathfrak{q}\in(\Spec R)_{i_{r}}}\lim_{s}A_{s\overline{\mathfrak{q}}}(\underline{j})$ (e.g., \cite[p. 65]{intro}), we know $0\simeq \lim_{s}A_{s\overline{\mathfrak{q}}}(\underline{j})\otimes_{R}C$ for all $\mathfrak{q}\in (\Spec R)_{i_{r}}$. In particular, $0\simeq A_{\overline{\mathfrak{q}}}(\underline{j})\otimes_{R}C\simeq A_{R/\mathfrak{q}}(\underline{j})\otimes_{R/\mathfrak{q}}\left(R/\mathfrak{q}\otimes_{R}C\right)$ holds. By the induction hypothesis applied to $\Spec R/\mathfrak{q}$, we know $R/\mathfrak{q}\otimes_{R}C$ satisfies $0\simeq \Frac(R/\mathfrak{q})\otimes_{R/\mathfrak{q}}\left(R/\mathfrak{q}\otimes_{R}C\right)\simeq \kappa(R^{\wedge}_{\mathfrak{q}})\otimes_{R^{\wedge}_{\mathfrak{q}}}R^{\wedge}_{\mathfrak{q}}\otimes_{R}C\simeq \kappa(R^{\wedge}_{\mathfrak{q}})\otimes_{R^{\wedge}_{\mathfrak{q}}}C^{\wedge}_{\mathfrak{q}}$. (Here, $\kappa(R^{\wedge}_{\mathfrak{q}})$ stands for the residue field of $R^{\wedge}_{\mathfrak{q}}$.) By the derived Nakayama lemma (Remark \ref{rmk:variousperfect}), we have $C^{\wedge}_{\mathfrak{q}}\simeq 0\in\Perf(R^{\wedge}_{\mathfrak{q}})$ for all $\mathfrak{q}\in(\Spec R)_{i_{r}}$. By the Noetherian hypothesis $R_{\mathfrak{q}}\to R^{\wedge}_{\mathfrak{q}}$ is faithfully flat \cite[tag 00MC]{stacks}, and we in particular know ($C_{\mathfrak{q}}\simeq 0$, and) $\Supp(C)\subseteq \Spec R\backslash (\Spec R)_{i_{r}}$. Thus, the closed subset $\Supp(C)\subseteq \Spec R$ ($C$ is perfect) should not contain codimension $1$ points of $\overline{\eta}$, and we know $\eta\notin\Supp(C)$, i.e., $C_{\eta}\simeq 0$. 
\end{proof}
\end{lemma}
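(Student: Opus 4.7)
The plan is to argue by induction on $\dim X$, ultimately reducing the vanishing of $A(n) \otimes_{\mathscr{O}} C$ at a generic point $\eta$ to a derived Nakayama computation at codimension-one specializations of $\eta$. I would first reduce to the affine case $X = \Spec R$, since both the hypothesis and conclusion are Zariski-local on $X$; the base case $\dim R = 0$ is vacuous, because the constraint $i_r < n = 0$ precludes any valid $\underline{j}$.

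For the inductive step, by Lemma \ref{lem:A(0)} the hypothesis becomes $A(n) \otimes_R A(\underline{j}) \otimes_R C \simeq 0$, so the perfect complex $A(\underline{j}) \otimes_R C$ vanishes at every generic point. Fix $\eta \in X_n$; since $A(\underline{j}) \otimes_R C$ has finitely many nonzero homotopy modules (flatness of $A(\underline{j})$ by \cite[Lemma 1.10]{adelic} and perfectness of $C$), I can shrink $\Spec R$ to an affine neighborhood of $\eta$ on which $A(\underline{j}) \otimes_R C \simeq 0$, and it suffices to deduce $C_\eta \simeq 0$. If $i_r < n-1$, I would base change along the flat map $A(\underline{j}) \to A(n-1,\underline{j})$ (again invoking Lemma \ref{lem:A(0)}) to reduce to the case $i_r = n-1$.

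With $i_r = n-1$, I would exploit the local factorization $A(\underline{j}) \simeq \prod_{\mathfrak{q} \in X_{n-1}} \lim_s A_{s\overline{\mathfrak{q}}}(\underline{j})$ to extract, for each codimension-one specialization $\mathfrak{q}$ of $\eta$, the vanishing $A_{\overline{\mathfrak{q}}}(\underline{j}) \otimes_R C \simeq 0$. Rewriting this in terms of $R/\mathfrak{q}$-adeles and applying the inductive hypothesis on $\Spec R/\mathfrak{q}$ (of dimension $n-1$) yields $\kappa(R^\wedge_\mathfrak{q}) \otimes_{R^\wedge_\mathfrak{q}} C^\wedge_\mathfrak{q} \simeq 0$. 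Derived Nakayama (Remark \ref{rmk:variousperfect}(3)) applied to the $\mathfrak{q}$-complete module $C^\wedge_\mathfrak{q}$ (completeness by Remark \ref{rmk:variousperfect}(2)) then forces $C^\wedge_\mathfrak{q} \simeq 0$, and faithful flatness of $R_\mathfrak{q} \to R^\wedge_\mathfrak{q}$ gives $C_\mathfrak{q} \simeq 0$ for every such $\mathfrak{q}$.

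The main obstacle will be justifying the rewriting step identifying $A_{\overline{\mathfrak{q}}}(\underline{j}) \otimes_R C$ with $A_{R/\mathfrak{q}}(\underline{j}^-) \otimes_{R/\mathfrak{q}} (R/\mathfrak{q} \otimes_R C)$ so that the inductive hypothesis on $\Spec R/\mathfrak{q}$ actually applies, since this requires keeping careful track of how the dimensional data $\underline{j}$ restricts to the closed subscheme $\Spec R/\mathfrak{q}$ as well as using the explicit description of adeles on coherent sheaves of the form $\imath_\ast \mathscr{O}_{\Spec R/\mathfrak{q}}$. Once this is in place, the geometric conclusion is immediate: $\Supp(C)$ is a closed subset of $\Spec R$ containing no codimension-one specialization of $\eta$, hence cannot contain $\eta$ itself, yielding $C_\eta \simeq 0$ and completing the induction.
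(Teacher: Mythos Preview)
Your proposal is correct and follows essentially the same route as the paper's proof: reduce to the affine case, induct on $\dim X$, use Lemma~\ref{lem:A(0)} to rewrite the hypothesis, shrink around a fixed $\eta\in X_n$, reduce to $i_r=n-1$ by base change, exploit the product decomposition of $A(\underline{j})$ to pass to $\Spec R/\mathfrak{q}$, apply the inductive hypothesis and derived Nakayama to kill $C^\wedge_{\mathfrak{q}}$, and conclude via the support argument. One small remark: the parenthetical ``again invoking Lemma~\ref{lem:A(0)}'' at the base-change step to $A(n-1,\underline{j})$ is unnecessary (and not quite what that lemma says)---the reduction only uses that $A(n-1,\underline{j})$ is an $A(\underline{j})$-algebra, so tensoring the vanishing object along it yields zero.
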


\begin{proof}[Proof of Proposition \ref{prop:kerneldirectcomp}]
As the statement is Zariski-local on $X$, we can assume $X = \Spec R$. From the assumption $0\simeq A(i,\underline{j})\otimes_{A(i)}\left(A(i)\otimes_{R}C\right)$, we have $0\simeq \lim_{s}A_{s\overline{\mathfrak{q}}}(i,\underline{j})\otimes_{R^{\wedge}_{\mathfrak{q}}}C^{\wedge}_{\mathfrak{q}}\simeq \lim_{s}A_{s\overline{\mathfrak{q}}}(i,\underline{j})\otimes_{R}C$ for all $\mathfrak{q}\in X_{i}$. In particular, $A_{R/\mathfrak{q}}(i,\underline{j})\otimes_{R}C\simeq 0$, or equivalently $A_{R/\mathfrak{q}}(i,\underline{j})\otimes_{R/\mathfrak{q}}\left(R/\mathfrak{q}\otimes_{R}C\right)\simeq 0$ for all $\mathfrak{q}\in(\Spec R)_{i}$. By Lemma \ref{lem:rationalization2} applied to $\Spec R/\mathfrak{q}$, we have $0\simeq A_{R/\mathfrak{q}}(i)\otimes_{R/\mathfrak{q}}\left(R/\mathfrak{q}\otimes_{R}C\right)\simeq \Frac(R/\mathfrak{q})\otimes_{R}C\simeq R^{\wedge}_{\mathfrak{q}}/\mathfrak{q}R^{\wedge}_{\mathfrak{q}}\otimes_{R^{\wedge}_{\mathfrak{q}}}R^{\wedge}_{\mathfrak{q}}\otimes_{R}C\simeq \kappa(R^{\wedge}_{\mathfrak{q}})\otimes_{R^{\wedge}_{\mathfrak{q}}}C^{\wedge}_{\mathfrak{q}}$. By the derived Nakayama lemma (Remark \ref{rmk:variousperfect}), we know $C^{\wedge}_{q}\simeq 0$ for all $q\in X_{i}$, i.e., $A(i)\otimes_{\mathscr{O}}C\simeq 0$. 
\end{proof}

\end{remarkn}

By combining Proposition \ref{prop:fibern} and Proposition \ref{prop:fiber2n}, we have the following:

\begin{proposition}\label{prop:exactseqadele}
Let $X$ be a Noetherian scheme of finite Krull dimension $n$, and let $0\leq i\leq n$. Then for each $T\in\mathcal{P}([i-1])$, we have an exact sequence 
\begin{align*}
\Perf_{\leq i-1}(A(T))\to \Perf_{\leq i}(A(T))\xrightarrow{A(T\sqcup\{i\})\underset{A(T)}{\otimes}-} \Perf_{\leq i}(A(T\sqcup\{i\})) ~~~~~\text{in $\Cat^{\ex}$. }
\end{align*}
\begin{proof}
The case of $T=\emptyset$ follows from Proposition \ref{prop:fibern}, and the remaining case of $T\not=\emptyset$ follows from Proposition \ref{prop:fiber2n}. 
\end{proof}
\end{proposition}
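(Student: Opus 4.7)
My plan is to split on whether $T$ is empty or not, and reduce to the two propositions already established. The statement is phrased uniformly, but the notation hides a natural dichotomy: when $T=\emptyset$ the tensor is taken over $A(\emptyset)=\mathscr{O}_{X}$ and the source category is $\Perf_{\leq i}(X)$, while for $T\neq\emptyset$ the tensor is genuinely over an adele ring. These two cases are handled by Proposition \ref{prop:fibern} and Proposition \ref{prop:fiber2n} respectively, so the main task is just to verify the identifications.

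First, I would treat $T=\emptyset$. Using the convention $A(\emptyset)=\mathscr{O}_{X}$ from Remark \ref{rmk:cubicaladeles} (1), we have $T\sqcup\{i\}=\{i\}$, and the asserted sequence becomes
\begin{align*}
\Perf_{\leq i-1}(X)\to\Perf_{\leq i}(X)\xrightarrow{A(i)\otimes_{\mathscr{O}}-}\Perf_{\leq i}(A(i)),
\end{align*}
which is exactly the exact sequence of Proposition \ref{prop:fibern}. No further argument is needed for this case.

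Second, for $T=\{i_{0},\ldots,i_{r}\}\neq\emptyset$ with $0\leq i_{0}<\cdots<i_{r}\leq i-1$, the constraint $T\subseteq[i-1]$ guarantees $i_{r}<i$, which is exactly the hypothesis required by Proposition \ref{prop:fiber2n}. The disjoint union $T\sqcup\{i\}$ corresponds to the tuple $(i_{0},\ldots,i_{r},i)$, and by the notational convention for $A(S)$ introduced just before Remark \ref{rmk:cubicaladeles} (which is symmetric in its arguments), we have $A(T\sqcup\{i\})=A(i,i_{0},\ldots,i_{r})$. The base ring $A(T)=A(i_{0},\ldots,i_{r})$, and the exact sequence asserted is precisely the content of Proposition \ref{prop:fiber2n}.

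Since these two cases exhaust $\mathcal{P}([i-1])$, the proposition follows by combining them. There is no real obstacle here; the work has been done in Propositions \ref{prop:fibern} and \ref{prop:fiber2n}, and this statement is the uniform packaging we will feed into the induction of the next subsection. The only thing worth double-checking is the edge case $i=0$, where $[i-1]=\emptyset$ so the only admissible $T$ is $\emptyset$, and Proposition \ref{prop:fibern} applies with $\Perf_{\leq-1}(X)=0$, giving the equivalence $\Perf_{\leq 0}(X)\simeq\Perf_{\leq 0}(A(0))$ that serves as the base case in the descent induction.
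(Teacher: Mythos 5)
Your proposal is correct and matches the paper's proof exactly: the paper likewise splits into the cases $T=\emptyset$ (Proposition \ref{prop:fibern}) and $T\neq\emptyset$ (Proposition \ref{prop:fiber2n}). Your additional verification of the notational identifications and the $i=0$ edge case is sound but not a different argument.
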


\subsection{Adelic descent for localizing invariants}
\label{subsec:adelicdescent}

Let $X$ be a Noetherian scheme of finite Krull dimension $n$. Recall that we have semi-cosimplicial and cubical diagrams $A^{\sbullet}_{\text{red}}(X):=A^{\sbullet}_{\text{red}}(X,\mathscr{O}_{X}):(\Delta_{s})_{\leq n}\to\CAlg(\mathscr{O}_{X})^{\heartsuit}$ and $A(-):\mathcal{P}([n])\to\CAlg(\mathscr{O}_{X})^{\heartsuit}$ of $\mathscr{O}_{X}$-algebras (Remarks \ref{rmk:functoriality} and \ref{rmk:cubicaladeles}). By setting $A_{\text{red}}^{-\infty}(X) := \mathscr{O}_{X}$, we can view $A_{\text{red}}^{\sbullet}(X)$ as an augmented semi-cosimplicial diagram, and after composition with $\Perf(-)$ we have an augmented semi-cosimplicial diagram $\Perf(A^{\sbullet}_{\text{red}}(X))$ in $\Cat^{\perf}$. Likewise, we have an $n$-cubical diagram $\Perf(A(-))$ in $\Cat^{\perf}$. For each $0\leq i\leq n$, we also consider the $n$-cube $\Perf_{\leq i}(A(-))$ in $\Cat^{\ex}$ induced as a subfunctor of $\Perf(A(-))$. Note that $\Perf_{\leq i}(A(-))$ can be regarded as an $i$-cubical diagram after restriction to $\mathcal{P}([i])$, since $\Perf_{\leq i}(X)\to\Perf(A(i_{0},...,i_{r}))$ factors through $\Perf(A(i_{r}))$ and hence $\Perf_{\leq i}(A(i_{0},...,i_{r}))\simeq 0$ for $i_{r}>i$. By further composing these diagrams with a localizing invariant $E:\Cat^{\ex}\to\mathcal{T}$, we obtain (augmented) semi-cosimplicial and cubical diagrams $E(A^{\sbullet}_{\text{red}}(X))$, $E(A(-))$, and $E(\Perf_{\leq i}(A(-)))$ in a stable $\infty$-category $\mathcal{T}$. 

\begin{theorem}\label{thm:adelicdescent}
Let $X$ be a Noetherian scheme of finite Krull dimension $n$, and let $E:\Cat^{\ex}\to\mathcal{T}$ be a localizing invariant valued in a stable $\infty$-category $\mathcal{T}$.\\
(1) For each $0\leq i\leq n$, the $n$-cubical diagram $E(\Perf_{\leq i}(A(-))):\N\mathcal{P}([n])\to\mathcal{T}$ is a limit diagram. In particular the $n$-cubical diagram $E(A(-)):\N\mathcal{P}([n])\to\mathcal{T}$ is a limit diagram, and we have an equivalence $E(X)\simeq\lim_{0\leq i_{0}<\cdots<i_{r}\leq n}E(A(i_{0},...,i_{r}))$ in $\mathcal{T}$. \\
(2) The (truncated) augmented semi-cosimplicial diagram $E(A^{\sbullet}_{\text{red}}(X)):\N((\Delta_{s})_{+})_{\leq n}\to\mathcal{T}$ is a limit diagram, and we have an equivalence $E(X)\simeq \lim_{[r]\in(\Delta_{s})_{\leq n}}E(A^{r}_{\text{red}}(X))$ in $\mathcal{T}$. 
\begin{proof}
We prove (1) through induction on $i$. By Proposition \ref{prop:fibern} the underlying functor of the $0$-cubical diagram $\Perf_{\leq 0}(X)\to\Perf_{\leq 0}(A(0))$ is an equivalence, and we in particular have $i=0$ case by applying $E$. Suppose $0<i\leq n$, and consider the $n$-cubical diagrams $\Perf_{\leq i}(A(-))$ and $E(\Perf_{\leq i}(A(-)))$. In order to check $E(\Perf_{\leq i}(A(-)))$ is a limit diagram, it suffices to check the $i$-cubical diagram obtained by a restriction to $\mathcal{P}([i])$ is a limit diagram, as images of the other vertices are zero. Now, consider the decomposition $\mathcal{P}([i]) = \mathcal{P}([i-1])\coprod\left(\mathcal{P}([i-1])\sqcup\{i\}\right)$ and view the $i$-cube $\Perf_{\leq i}(A(-))|_{\mathcal{P}([i])}$ as a morphism $\Perf_{\leq i}(A(-))|_{\mathcal{P}([i-1])}\to\Perf_{\leq i}(A(i,-))|_{\mathcal{P}([i-1])}$ of $(i-1)$-cubical diagrams, and similarly for $E(\Perf_{\leq i}(A(-)))$. By applying $E$ to the exact sequences of Proposition \ref{prop:exactseqadele}, we know $\fib\left(E(\Perf_{\leq i}(A(-)))|_{\mathcal{P}([i-1])}\to E(\Perf_{\leq i}(A(i,-)))|_{\mathcal{P}([i-1])}\right)\simeq E(\Perf_{\leq i-1}(A(-)))|_{\mathcal{P}([i-1])}$. By induction hypothesis this $(i-1)$-cubical diagram is a limit diagram, and hence by Proposition \ref{prop:fiberlimcrit} we know the original $i$-cubical diagram $E(\Perf_{\leq i}(A(-)))|_{\mathcal{P}([i])}$ is a limit diagram, i.e, $E(\Perf_{\leq i}(A(-)))$ is a limit diagram. This establishes (1), and in particular for $i=n$ we have $E(X)\simeq \lim_{T\in\mathcal{P}([n])\backslash\emptyset}E(A(T))$ by Remark \ref{rmk:idem}. By Corollary \ref{cor:fiberlim} and Remark \ref{rmk:cubicaladeles}, we know $E(A^{\sbullet}_{\text{red}}(X))$ is also a limit diagram, and have $E(X)\simeq\lim_{T\in\mathcal{P}([n])\backslash\emptyset}E(A(T))\simeq \lim_{[r]\in(\Delta_{s})_{\leq n}}E(A^{r}_{\text{red}}(X))$ by (1). 
\end{proof}
\end{theorem}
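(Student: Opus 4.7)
My plan is to prove part (1) by induction on $i$, and then deduce part (2) from the $i=n$ case using the comparison between cubical and semi-cosimplicial limits established in Corollary \ref{cor:fiberlim}.

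For the base case $i = 0$ of the induction, I would apply Proposition \ref{prop:fibern} with $i = 0$: since $\Perf_{\leq -1}(X) = 0$ by convention, the exact sequence forces the functor $\Perf_{\leq 0}(X) \to \Perf_{\leq 0}(A(0))$ to be an equivalence after idempotent completion, so applying $E$ gives a $0$-cube which is automatically a limit diagram (and the higher vertices of the putative $n$-cube $E(\Perf_{\leq 0}(A(-)))$ are all zero since $\Perf_{\leq 0}(A(i_{0},\ldots,i_{r})) \simeq 0$ whenever $i_{r} > 0$, which I will verify by observing that the generating functor $\Perf_{\leq 0}(X) \to \Perf(A(i_{0},\ldots,i_{r}))$ factors through $\Perf(A(i_{r}))$ and vanishes on dimension-$0$-supported perfect modules when $i_r > 0$).

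For the inductive step, I would fix $0 < i \leq n$ and assume the $n$-cubical diagram $E(\Perf_{\leq i-1}(A(-)))$ is a limit diagram. I would then observe that the diagram $E(\Perf_{\leq i}(A(-)))$ has zero values on all $T \in \mathcal{P}([n])$ containing some element $> i$, so being a limit diagram is equivalent to being a limit diagram after restriction to $\mathcal{P}([i])$. Viewing this restricted $i$-cube via the decomposition $\mathcal{P}([i]) = \mathcal{P}([i-1]) \sqcup (\mathcal{P}([i-1]) \sqcup \{i\})$ as a morphism of $(i-1)$-cubes, I can apply $E$ to the exact sequences from Proposition \ref{prop:exactseqadele}: the exactness ensures that the fiber $(i-1)$-cube of this morphism is $E(\Perf_{\leq i-1}(A(-)))|_{\mathcal{P}([i-1])}$, which equals the restriction of the induction-hypothesis limit diagram. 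By Proposition \ref{prop:fiberlimcrit}, the $i$-cube is then itself a limit diagram, completing the induction. Taking $i = n$ and invoking Remark \ref{rmk:idem} (which says localizing invariants don't distinguish $\Perf_{\leq n}(A(T))$ from $\Perf(A(T))$) gives $E(X) \simeq \lim_{T \in \mathcal{P}([n]) \setminus \emptyset} E(A(T))$.

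For part (2), I would apply Corollary \ref{cor:fiberlim} to the $n$-cubical diagram $E(A(-))|_{\mathcal{P}([n])\setminus\emptyset}$, which identifies its limit with $\lim_{[r] \in (\Delta_{s})_{\leq n}} (c_{n})_{*} E(A)$. By Remark \ref{rmk:cubicaladeles}(2), the right Kan extension $(c_{n})_{*} E(A)$ is equivalent to $E(A^{\sbullet}_{\text{red}}(X))$ as a semi-cosimplicial diagram, and combining with part (1) yields the augmented limit formula.

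The main obstacle, or at least the place requiring the most care, is the inductive step: one must verify that after applying the (exact, but not limit-preserving) functor $E$, the fiber of the resulting morphism of $(i-1)$-cubes in $\mathcal{T}$ is genuinely computed termwise from the fibers of the exact sequences of Proposition \ref{prop:exactseqadele}. This amounts to checking that fibers in a functor category $\Fun((\Delta^{1})^{i-1}, \mathcal{T})$ are computed pointwise, which holds in any stable $\infty$-category, but this is the precise point at which the localizing property of $E$ is deployed. The identification of vanishing vertices and the passage through Remark \ref{rmk:idem} at the final stage are also technical but routine.
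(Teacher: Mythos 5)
Your proposal is correct and follows essentially the same route as the paper: induction on $i$ with Proposition \ref{prop:fibern} for the base case, restriction to $\mathcal{P}([i])$, the decomposition into a morphism of $(i-1)$-cubes whose fiber is identified via Proposition \ref{prop:exactseqadele} and the localizing property of $E$, then Proposition \ref{prop:fiberlimcrit}, with part (2) deduced from Corollary \ref{cor:fiberlim} and Remark \ref{rmk:cubicaladeles}. The point you flag as the main obstacle (pointwise computation of fibers in $\Fun((\Delta^{1})^{i-1},\mathcal{T})$ and the deployment of the localizing property there) is exactly where the paper's argument also does its work, and your treatment of it is adequate.
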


\textsc{Department of Mathematics, University of Toronto, Toronto, ON, Canada}\\
\indent \textit{E-mail address}: \texttt{khsato@math.utoronto.ca}

\end{document}